\newcommand{\ban}[1]{\mathcal{#1}}
\newtheorem{theo}{Theorem}
\newtheorem{prop}{Proposition}
\newtheorem{lemm}{Lemma}
\newtheorem{coro}{Corollary}
\newtheorem{defi}{Definition}
\newtheorem{clai}{Claim}
\newtheorem{rema}{Remark}
\newtheorem{exam}{Example}
\newtheorem{theoalpha}{Theorem}
\renewcommand{\le}{\leqslant}
\renewcommand{\leq}{\leqslant}
\renewcommand{\ge}{\geqslant}
\renewcommand{\geq}{\geqslant}
\newcommand{\vep}{\varepsilon}
\newcommand{\bE}{\ensuremath{\mathbb E}}
\newcommand{\bN}{\ensuremath{\mathbb N}}
\newcommand{\bP}{\ensuremath{\mathbb P}}
\newcommand{\bR}{\ensuremath{\mathbb R}}
\newcommand{\sD}{\ensuremath{\mathsf D}}
\newcommand{\sL}{\ensuremath{\mathsf L}}
\newcommand{\sX}{\ensuremath{\mathsf X}}
\newcommand{\sY}{\ensuremath{\mathsf Y}}
\newcommand{\cB}{\ensuremath{\mathcal B}}
\newcommand{\cF}{\ensuremath{\mathcal F}}
\newcommand{\cP}{\ensuremath{\mathcal P}}
\newcommand{\scF}{\ensuremath{\mathscr F}}
\newcommand{\cdist}[1]{\mathsf{c}_{#1}}
\newcommand{\dia}{\ensuremath{\mathsf{D}}}
\newcommand{\dX}{\ensuremath{\mathsf{d_X}}}
\newcommand{\dY}{\ensuremath{\mathsf{d_Y}}}
\newcommand{\sd}{\ensuremath{\mathsf{d}}}
\newcommand{\ham}{\ensuremath{\mathsf{H}}}
\newcommand{\met}[1]{\mathsf{#1}}
\newcommand{\banX}{\ban X}
\newcommand{\banY}{\ban Y}
\newcommand{\metX}{\met X}
\newcommand{\banXn}{(\ban X,\norm{\cdot})}
\newcommand{\metXd}{(\met X,\dX)}
\newcommand{\metYd}{(\met Y,\dY)}
\newcommand{\La}{\mathsf{La}}
\newcommand{\Wa}{\mathsf{Wa}}
\newcommand{\dwas}{\ensuremath{\mathsf{d_{W_{1}}}}}
\newcommand{\eqd}{\stackrel{\mathrm{def}}{=}}
\newcommand{\1}{\mathbf{1}}
\newcommand{\Ex}{\mathbb{E}}
\newcommand{\ksubset}{\!\subset\!}
\newcommand{\abs}[1]{\lvert #1\rvert}
\newcommand{\bigabs}[1]{\big\lvert #1\big\rvert}
\newcommand{\norm}[1]{\| #1\|}
\newcommand{\bnorm}[1]{\Big\| #1\Big\|}
\newcommand{\Pk}{\mathsf{P}_k}
\newcommand{\os}{\oslash}
\DeclareMathOperator{\Lip}{Lip}
\DeclareMathOperator{\Per}{Per}
\DeclareMathOperator{\Lin}{\cB}
\DeclareMathOperator{\LF}{LF}
\def \eps {\varepsilon}
\DeclareMathOperator{\med}{med}
\begin{document}

\title[A tale of two dimensions]{$L_1$-distortion of Wasserstein metrics: a tale of two dimensions}

\author{F.~ Baudier}
\address{F.~ Baudier, Texas A\&M University, College Station, TX 77843, USA}
\email{florent@math.tamu.edu}

\author{C.~ Gartland}
\address{C.~ Gartland, Texas A\&M University, College Station, TX 77843, USA}
\email{cgartland@math.tamu.edu}

\author{Th.~ Schlumprecht}
\address{Th.~Schlumprecht, Department of Mathematics, Texas A\&M University, College Station, TX
  77843-3368, USA, and Faculty of Electrical Engineering, Czech Technical University in Prague,
  Technick\'a 2, 166 27, Prague 6, Czech Republic}
\email{schlump@math.tamu.edu}

\thanks{F. B. was partially supported by the National Science Foundation under Grant Numbers DMS-1800322 and DMS-2055604, C. G. was partially supported by an AMS-Simons travel grant, Th. S. was partially supported by the National Science Foundation under Grant Numbers DMS-1764343 and DMS-2054443.}
	
\keywords{}

\subjclass[2010]{46B85, 68R12, 46B20, 51F30, 05C63, 46B99}

\begin{abstract}
By discretizing an argument of Kislyakov, Naor and Schechtman proved that the 1-Wasserstein metric over the planar grid $\{0,1,\dots n\}^2$ has $L_1$-distortion bounded below by a constant multiple of $\sqrt{\log n}$. We provide a new ``dimensionality" interpretation of Kislyakov's argument, showing that, if $\{G_n\}_{n=1}^\infty$ is a sequence of graphs whose isoperimetric dimension and Lipschitz-spectral dimension equal a common number $\delta \in [2,\infty)$, then the 1-Wasserstein metric over $G_n$ has $L_1$-distortion bounded below by a constant multiple of $(\log |G_n|)^{\frac{1}{\delta}}$. We proceed to compute these dimensions for $\oslash$-powers of certain graphs. In particular, we get that the sequence of diamond graphs $\{\dia_n\}_{n=1}^\infty$ has isoperimetric dimension and Lipschitz-spectral dimension equal to 2, obtaining as a corollary that the 1-Wasserstein metric over $\dia_n$ has $L_1$-distortion bounded below by a constant multiple of $\sqrt{\log| \dia_n|}$. This answers a question of Dilworth, Kutzarova, and Ostrovskii and exhibits only the third sequence of $L_1$-embeddable graphs whose sequence of 1-Wasserstein metrics is not $L_1$-embeddable.
\end{abstract}

\maketitle

\setcounter{tocdepth}{1}
\tableofcontents

\section{Introduction}
Let $\metXd$ be a finite metric space and $\cP(\metX)$ the set of probability measures on $\metX$. The 1-Wasserstein metric $\dwas$ on $\mathcal{P}(\metX)$ is defined by
\begin{equation*}
    \dwas(\mu,\nu) = \inf_{\gamma} \int_{\metX \times \metX} \dX(x,y) d\gamma(x,y),
\end{equation*}
where the infimum is over all $\gamma \in \cP(\metX \times \metX)$ with marginals $\mu$ and $\nu$. The distance $\dwas(\mu,\nu)$ can be interpreted as the cost of transporting the mass of $\mu$ onto the mass of $\nu$ where cost is directly proportional to the distance moved and to the quantity of mass transported. The metric space $(\cP(\sX), \dwas)$ is referred to as the \emph{1-Wasserstein space} over $\metX$, and we denote it by $\Wa_1(\metX)$. Wasserstein metrics are of high theoretical interest but most importantly they are fundamental in applications in countless areas of applied mathematics, engineering, physics, computer science, finance, social sciences, and more. Indeed, they provide a natural and robust way to measure the (dis)similarity between the numerous objects which can be modeled by probability distributions. We point the interested reader to some of the many monographs discussing Wasserstein metrics and optimal transport in general (\cite{RR98}, \cite{RR98_II}, \cite{Villani03}, \cite{Villani09}, \cite{Santambrogio15}, \cite{ABS21}, \cite{FigalliGlaudo21}).
For both theoretical and practical reasons, the problem of low-distortion embeddings of $\Wa_1(\metX)$ into the Banach space $L_1$ has attracted much interest. We recall here that the distortion of one metric space $\metXd$ into another $\metYd$ is the quantity $\cdist{\sY}(\metX) := \inf_f \Lip(f) \cdot \Lip(f^{-1})$, where the infimum is over all injections $f: \sX \to \sY$ and $\Lip(f)$ is the Lipschitz constant of $f$. Of course, since the embedding $\delta \colon \sX \to W_1(\sX)$ given by $x \mapsto \delta_x$ is isometric, the distortion of $\Wa_1(\metX)$ into $L_1$ is at least as large as that of $\metX$ into $L_1$. Given a sequence of metric spaces $\{\metX_n\}_{n\in \bN}$ such that $\sup_{n\in \bN} \cdist{L_1}(\metX_n)<\infty$, it is a natural and important problem to understand whether or not $\sup_{n\in \bN} \cdist{L_1}(\Wa_1(\metX_n))$ remains finite or not. It has been observed by many that the 1-Wasserstein metric over a tree admits a closed-formula from which isometric embeddability into an $L_1$-space follows immediately (cf. \cite{Charikar02}, \cite{EvansMatsen12}, and the detailed analysis in \cite{VMP}). However, this problem has turned out to be difficult in general, and nontrivial lower bounds for the $L_1$-distortion of Wasserstein metrics are known to exist only in essentially two situations: when the ground space is the $n$-by-$n$ planar grid $[n]^2:=\{0,1,\dots n\}^2$ or the $k$-dimensional Hamming cube $\ham_k$, i.e. $\{0,1\}^k$ equipped with the Hamming metric counting the number of differing corresponding entries. Indeed, by \cite[Theorem 1.1]{NS07} it holds that $\cdist{L_1}(\Wa_1([n]^2)) = \Omega(\sqrt{\log n}) =  \Omega(\sqrt{\log |[n]^2|})$, and by \cite[Corollary 2]{KhotNaor06}, it holds that $\cdist{L_1}(\Wa_1(\ham_k)) = \Omega(k) = \Omega(\log |\ham_k|)$, where $\abs{\cdot}$ denotes cardinality. Note that the fact that $\sup_{k\in \bN}\cdist{L_1}(\Wa_1(\ham_k))=\infty$ was essentially proved by Bourgain \cite{Bourgain86}. The main result of this article is the provision of a third example of a family of spaces $\{\metX_n\}_{n\in\bN}$ which embed into $L_1$ with constant distortion but for which $\{\Wa_1(\metX_n)\}_{n\in \bN}$ does not. Our family is a sequence of generalized diamond graphs $\sD_{k,k}^{\os n}$ equipped with the shortest path metric\footnote{Each graph $\sD_{k,k}^{\os n}$ has $L_1$-distortion bounded above by 14 \cite[Theorem~4.1]{GNRS}.} (see Example~\ref{ex:diamonds} and Definition~\ref{def:slash-power}, also Figures~\ref{fig:diamonds},~\ref{fig:D22-slash-D22}), and the following theorem implies a negative answer to a question of Dilworth-Kutzarova-Ostrovskii \cite[Problem 6.6]{DKO20} about the classical diamond graphs $\{\sD_{2,2}^{\os n}\}_{n\in\bN}$.

\begin{theoalpha} \label{thm:diamondWasserstein}
For each fixed integer $k \geq 2$, $\cdist{L_1}(\Wa_1(\sD_{k,k}^{\os n})) = \Omega_k\left(\sqrt{\log|\sD_{k,k}^{\os n}|}\right)$.
\end{theoalpha}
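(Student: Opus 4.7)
The plan is to invoke the general dimensionality framework announced in the abstract: if $\{G_n\}_{n=1}^\infty$ is a sequence of graphs whose isoperimetric dimension and Lipschitz-spectral dimension share a common value $\delta\in[2,\infty)$, then $\cdist{L_1}(\Wa_1(G_n)) = \Omega((\log|G_n|)^{1/\delta})$. With $\delta=2$ this gives exactly the lower bound $\sqrt{\log|\sD_{k,k}^{\os n}|}$ required by Theorem~\ref{thm:diamondWasserstein}, so the task reduces to proving that, for each fixed $k\ge 2$, both dimensions of the sequence $\{\sD_{k,k}^{\os n}\}_{n=1}^\infty$ are equal to $2$.

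The structure of $\sD_{k,k}^{\os n}$ is recursive: the $\oslash$-operation substitutes a copy of the base graph into each edge of the previous iterate. As a consequence, $|\sD_{k,k}^{\os n}|\asymp_k (\mathrm{diam}\,\sD_{k,k}^{\os n})^{2}$, which is precisely the volume-growth signature of a two-dimensional graph. Both the upper bound ``dimension~$\le 2$'' and the lower bound ``dimension~$\ge 2$'' should therefore be accessible by induction on $n$. The upper bound is the routine half: natural level sets of the distance to a pole, together with explicit Lipschitz witnesses lifted from the base graph $\sD_{k,k}$, produce subsets with $|\partial A|\lesssim |A|^{1/2}$ and Lipschitz functions whose Poincar\'e constants are of order $\mathrm{diam}(\sD_{k,k}^{\os n})^{2}$, matching the expected dimension.

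The main obstacle is the matching lower bounds. Since the $\oslash$-product is not a Cartesian product, the standard tensorization identities for isoperimetric constants and spectral gaps do not directly apply. I would handle this via an inductive decomposition: given $A\subseteq \sD_{k,k}^{\os n}$, look at the restrictions of $A$ to each of the copies of $\sD_{k,k}^{\os (n-1)}$ that constitute $\sD_{k,k}^{\os n}$, and split into two regimes. In copies where $A$ is ``balanced'', the inductive isoperimetric inequality produces boundary edges internally; in copies where $A$ is almost empty or almost full, the boundary is paid for by the top-level geometry of $\sD_{k,k}$, whose $|A|^{1/2}$ isoperimetry is essentially one-dimensional along each of its $k$ parallel paths. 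Optimizing over the relative populations of the two regimes should yield $|\partial A|\gtrsim_k |A|^{1/2}$. A parallel inductive argument, phrased via Dirichlet forms and a Cheeger-type comparison between isoperimetric profile and spectral gap on each scale, should upgrade this to the Lipschitz-spectral lower bound.

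Once both dimensions are pinned down to $2$, the general theorem delivers $\cdist{L_1}(\Wa_1(\sD_{k,k}^{\os n})) = \Omega_k(\sqrt{\log|\sD_{k,k}^{\os n}|})$ immediately, with the $k$-dependence in $\Omega_k(\cdot)$ absorbing only the multiplicative constants accumulated through the inductive comparisons with the base graph $\sD_{k,k}$.
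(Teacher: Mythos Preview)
Your high-level reduction is exactly the paper's: apply Theorem~\ref{thm:generalWasserstein} with $\delta_{iso}=\delta_{spec}=2$ and bandwidth $\beta=k^n$, then use $\log|\sD_{k,k}^{\os n}|=\Theta_k(n)$. The work is indeed to verify the two dimension hypotheses for $\sD_{k,k}^{\os n}$, and your isoperimetric sketch (induct on the $\os$-structure, separating copies that are balanced from those that are nearly full or empty) is in the spirit of the paper's Theorem~\ref{thm:slash-isop}. The paper's induction is organized somewhat differently---it inducts on a combinatorial counter $N(S)$ and, crucially, strengthens the hypothesis to all edge-induced vertex measures $\mu_\alpha(\nu)$ rather than only the degree measure, which is what makes the recursion close---but your outline is not wrong in principle.

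The genuine gap is in the Lipschitz-spectral half. The paper's Definition~\ref{def:Lip-spec-profile} does not ask for eigenvalue growth of a Laplacian; it asks for an orthogonal family $\{f_i\}$ with uniform two-sided control $C_1^{-1}\le\|f_i\|_{L_1}\le\|f_i\|_{L_\infty}\le C_\infty$ and $\gamma_F(s)\gtrsim s^2$. A Cheeger-type comparison with the isoperimetric profile gives you at best a lower bound on the spectral gap (one eigenvalue), and even higher-eigenvalue results in the style of Chung--Grigor'yan--Yau do not come with any $L_1$/$L_\infty$ control on the eigenfunctions. That control is essential in the proof of Theorem~\ref{thm:Kislyakov}: the $L_\infty$ bound is used for the Riesz--Thorin interpolation of $\mathcal{F}$, and the $L_1$ bound is used to get $|\mathcal{F}(f_j)|\succeq C_1^{-2}e_j$. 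So ``Dirichlet forms plus Cheeger on each scale'' does not deliver what the theorem consumes. The paper instead builds the family explicitly and recursively (Section~\ref{sec:Lip-spec}): a single base function $\phi$ on $\sD_{k,k}$ is lifted via barycentric extensions and $\os$-products with Hadamard-type edge functions, and the recursion is made to close by tracking a strengthened notion (strong $\nu$-orthogonality) together with an edge-sign property that preserves $L_1$-norms under $\Lin$.

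One smaller point: your ``upper bound is the routine half'' paragraph is unnecessary. Theorem~\ref{thm:generalWasserstein} only needs the isoperimetric inequality with exponent $\tfrac{\delta-1}{\delta}=\tfrac12$ and the existence of enough orthogonal Lipschitz functions; showing that one cannot do better than $\delta=2$ plays no role in the lower bound for $\cdist{L_1}(\Wa_1(\sD_{k,k}^{\os n}))$.
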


We deduce Theorem~\ref{thm:diamondWasserstein} from a more general theorem on Wasserstein spaces over graphs with certain dimension estimates (see Theorem~\ref{thm:generalWasserstein} and the sentence following it). Before further discussion, we set notation and introduce the key definitions.

Throughout this article, we adopt the convention that graphs are finite, connected, directed, with at least one edge, and without self-loops or multiple edges between the same pair of vertices. For a graph $G$, we write $V(G)$ for the vertex set and $E(G)$ for the edge set. For a (directed) edge $e = (u,v) \in E(G)$, we write $e^-$ for $u$ and $e^+$ for $v$. Recall that a sequence $\{u_i\}_{i=0}^k \subset V(G)$ is a \emph{path} if, for every $1 \leq i \leq k$, one of $(u_{i-1},u_i)$, $(u_{i},u_{i-1})$ belongs to $E(G)$ (the path is \emph{directed} if always $(u_{i-1},u_i) \in E(G)$). A metric $\sd$ on $V(G)$ is \emph{geodesic} if for any two vertices $x,y \in V(G)$, there exists a path $\{u_i\}_{i=0}^k \subset V(G)$ such that $u_0 = x$, $u_k = y$, and $\sd(x,y) = \sum_{i=1}^k \sd(u_{i-1},u_i)$. 

\begin{rema}
A geodesic metric $\sd$ may be equivalently defined as the shortest path metric with respect to the edge-weights $(\sd(e))_{e\in E(G)}$. Here and in the sequel, we write $\sd(e)$ for $\sd(e^-,e^+)$. 
\end{rema}

When $S$ is a finite set (typically $V(G)$ or $E(G)$), we say that $\nu$ is \emph{a measure on $S$} if $\nu$ is a measure on the measurable space $(S,2^S)$; the domain of $\nu$ is thus the entire power set of $S$. We first define the isoperimetric dimension in the rather general context of graphs equipped with a geodesic metric and probability measures on its edge and vertex sets.

\begin{defi}[Isoperimetric dimension]\label{def:iso-dim}
Let $G$ be a graph, $\delta \in [1,\infty)$, $C_{iso}\in(0, \infty)$, $\mu$ a probability measure on $V(G)$, $\nu$ a probability measure on $E(G)$, and $\sd$ a geodesic metric on $V(G)$. We say that $G$ has $(\mu,\nu, \sd)$-\emph{isoperimetric dimension} $\delta$ with constant $C_{iso}$ if for every $A \subset V(G)$
	\begin{equation*}
		\min\{\mu(A), \mu(A^c)\}^{\frac{\delta-1}{\delta}} \leq C_{iso} \Per_{\nu, \sd}(A),
	\end{equation*}
where $\partial_G A := \{ (x,y)\in E(G) \colon \abs{ \{x, y\} \cap A} =1 \} $ is the edge-boundary of $A$, and the $(\nu, \sd)$-perimeter of $A$ is: 
	\begin{equation*}
		\Per_{\nu, \sd}(A) := \sum_{e \in \partial_G A } \frac{ \nu(e) }{\sd(e)}.
	\end{equation*}
\end{defi}
 
To the best of our knowledge, the second dimensional parameter we define is new. It is inspired by the classical notion of spectral dimension derived from the spectrum of a Laplace operator. We formally introduce the notion of Lipschitz growth function as a nonlinear analogue of the eigenvalue counting function.

\begin{defi}[Lipschitz growth function]
Let $\metXd$ be a metric space. The \emph{Lipschitz growth function} of a family of Lipschitz functions $F = \{f_i\colon \metX \to \bR\}_{i\in I}$ is the function \\ $\gamma_F \colon [0,\infty) \to \bN \cup \{\infty\}$ defined by $\gamma_F(s) = |\{i \in I \colon \Lip(f_i) \leq s\}|$.
\end{defi}

If one can define a Laplace operator $\Delta$ on $\metX$ and if $\{f_i\}_{i\in I}$ is an orthonormal basis of $L_2(\metX,\mu)$, for some probability measure $\mu$ on $\metX$, consisting of eigenfunctions of $\Delta$, then the Lipschitz growth function $\gamma$ coincides with the eigenvalue counting function\footnote{The classical eigenvalue counting function usually counts the eigenvalues of $\Delta$.}, i.e. $N(\lambda):= |\{i \in I\colon \lambda_i \leq \lambda\}| = \gamma(\lambda)$ where $\lambda_i$ is the eigenvalue of $\sqrt{\Delta}$ corresponding to $f_i$.

\begin{defi}[Lipschitz-spectral profile] \label{def:Lip-spec-profile}
Let $C_{1}, C_{\infty}, C_{\gamma}\in(0,\infty)$, $\delta \in [1,\infty)$, and $\beta \in[1,\infty)$. For $G$ a graph, $\mu$ a probability measure on $V(G)$, and $\sd$ a metric on $V(G)$, we say that $G$ has $(\mu, \sd)$-\emph{Lipschitz-spectral profile of dimension $\delta$ and bandwidth $\beta$ with constants $C_{1}, C_{\infty}, C_{\gamma}$} if there exists a collection of functions $F = \{f_i\colon V(G) \to \bR\}_{i\in I}$ satisfying:
	\begin{enumerate}
		\item\label{item:1} $C_{1}^{-1} \le \inf_{i\in I} \norm{f_i}_{L_1(\mu)} \le \sup_{i\in I} \norm{f_i}_{L_\infty(\mu)} \le C_{\infty},$ 
		\item $\{f_i\}_{i\in I}$ is an orthogonal family in $L_2(\mu)$, and
		\item\label{item:3} for every $s \in [1, \beta]$, $\gamma_F(s) \geq C_{\gamma}^{-1} s^\delta$.
	\end{enumerate}
\end{defi}

Our terminology \emph{Lipschitz-spectral dimension} is motivated by the fact that in the special situation mentioned above the estimate $N(\lambda) \gtrsim \lambda^\delta$ says that $(\metX, \mu, \Delta)$ has spectral dimension at least $\delta$. This important concept in spectral geometry  (see \cite{Chavel84} or \cite{Canzani} and the references therein) and in the field of analysis on fractals \cite[Chapter 4]{Kigami01}, originates from the classical Weyl law \cite{Weyl12} (see also \cite[Chapter 1]{Ulm_book}).

\begin{theoalpha} \label{thm:generalWasserstein}
Let $G$ be a graph equipped with geodesic metric $\sd$ on $V(G)$. If there exist probability measures $\mu$ and $\nu$ (on $V(G)$ and $E(G)$ respectively), numbers $\delta \in [2,\infty), \beta \in (0,\infty)$, and constants $C_{iso},C_{1},C_{\infty},C_{\gamma}\in(0,\infty)$ such that $G$ has $(\mu,\nu,\sd)$-isoperimetric dimension $\delta$ with constant $C_{iso}$ and $(\mu,\sd)$-Lipschitz-spectral profile of dimension $\delta$ and bandwidth $\beta$ with constants $C_{1},C_{\infty},C_{\gamma}$, then
    \begin{equation*}
        \cdist{L_1}(\Wa_1(G)) \geq \frac{ 1 }{2C_{iso}C_{1}^2C_{\infty}} \left( \frac{\delta}{C_{\gamma}} \right)^{\frac{1}{\delta}} (\ln \beta)^{\frac{1}{\delta}}.
    \end{equation*}
\end{theoalpha}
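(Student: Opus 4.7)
The plan is to adapt Kislyakov's duality argument, as used by Naor--Schechtman for $\Wa_1([n]^2)$, by replacing the Fourier characters on the torus with the abstract Lipschitz-spectral family and the grid isoperimetry with the hypothesized $(\mu,\nu,\sd)$-isoperimetric dimension. Normalize any bilipschitz embedding $F\colon \Wa_1(G)\to L_1$ so that $\|F(\mu)-F(\nu)\|_{L_1}\le \dwas(\mu,\nu)\le D\|F(\mu)-F(\nu)\|_{L_1}$, where $D:=\cdist{L_1}(\Wa_1(G))$, and introduce the $1$-Lipschitz, $D$-bilipschitz map $\Phi\colon V(G)\to L_1$ defined by $\Phi(v):=F(\delta_v)$. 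After subtracting $\mu$-means (which costs only a factor that can be absorbed in constants), assume each $f_i$ satisfies $\int f_i\,d\mu=0$, set $m_i:=\tfrac12\|f_i\|_{L_1(\mu)}$, let $\mu_i^\pm$ be the probability measures obtained by normalizing the positive/negative parts of $f_i\,d\mu$, and put $c_i:=\int_V f_i\,\Phi\,d\mu\in L_1$.

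The first half is the lower bound on $\|c_i\|_{L_1}$. Assuming (as one may, up to an $O(1)$ factor) that $F$ extends linearly to the Lipschitz-free space $\mathcal{F}(G)$, one obtains $F(\mu_i^+)-F(\mu_i^-)=m_i^{-1}c_i$. Testing Kantorovich--Rubinstein duality against $f_i/\Lip(f_i)$ yields $m_i\dwas(\mu_i^+,\mu_i^-)\ge\|f_i\|_{L_2(\mu)}^2/\Lip(f_i)$, whence
\[
\|c_i\|_{L_1}\;\ge\;\frac{\|f_i\|_{L_2(\mu)}^2}{D\,\Lip(f_i)}\;\ge\;\frac{1}{D\,C_1^2\,\Lip(f_i)}.
\]

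The second half is the upper bound on $\sum_i\|c_i\|_{L_1}^2$. Fix a cut-cone representation $\Phi(v)(\omega)=\mathbf{1}_{A_\omega}(v)$ of $\Phi$ against a cut measure $\rho$ on $\Omega$; the $1$-Lipschitz property becomes $\rho(\{\omega:e\in\partial_G A_\omega\})\le\sd(e)$ for each edge $e$. The mean-zero assumption rewrites $c_i(\omega)=\langle f_i,\mathbf{1}_{A_\omega}-\mu(A_\omega)\rangle_{L_2(\mu)}$, so Bessel's inequality for the $L_2(\mu)$-orthogonal family $\{f_i\}$ gives the pointwise-in-$\omega$ estimate $\sum_i c_i(\omega)^2/\|f_i\|_{L_2(\mu)}^2\le\mu(A_\omega)\mu(A_\omega^c)$. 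Coupling this with the pointwise bound $|c_i(\omega)|\le C_\infty\min(\mu(A_\omega),\mu(A_\omega^c))$ (from $\|f_i\|_\infty\le C_\infty$ and mean-zeroness) and with the perimeter-based $L_1$-Poincar\'e inequality $\int\mu(A_\omega)\mu(A_\omega^c)\,d\rho\le C_{iso}/2$ -- which is obtained from the $(\mu,\nu,\sd)$-isoperimetric dimension hypothesis by applying the coarea formula to the $1$-Lipschitz $L_1$-valued $\Phi$ -- one targets the dimension-free bound
\[
\sum_i\|c_i\|_{L_1(\rho)}^2\;\le\;(2\,C_{iso}\,C_\infty)^2.
\]

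Assembly: since $\delta\ge 2$, the inclusion $\ell^2\subset\ell^\delta$ converts the upper bound into $\sum_i\|c_i\|_{L_1}^\delta\le(2\,C_{iso}\,C_\infty)^\delta$. Raising the lower bound to the $\delta$-th power and summing yields $\frac{1}{D^\delta C_1^{2\delta}}\sum_i\Lip(f_i)^{-\delta}\le(2\,C_{iso}\,C_\infty)^\delta$. The Lipschitz-growth hypothesis $\gamma_F(s)\ge C_\gamma^{-1}s^\delta$ for $s\in[1,\beta]$, together with integration by parts, gives $\sum_i\Lip(f_i)^{-\delta}\ge(\delta/C_\gamma)\ln\beta$; substituting and rearranging produces the claimed lower bound on $D$. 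The principal technical obstacle is the upper bound above: a naive Cauchy--Schwarz passage from the Bessel $L_2(\rho)$-estimate to an $L_1(\rho)$-estimate would lose a factor of the total cut mass $\rho(\Omega)$, which is in general unbounded. The expected remedy is a signed, mean-centered cut representation whose coordinate functions are $\{-1,0,1\}$-valued: exploiting the Schoenberg ``negative-type'' identity $\|\cdot\|_{L_1(\rho)}=\|\cdot\|_{L_2(\rho)}^2$ on such functions effectively upgrades $\Phi$ to a Hilbert-valued embedding on which Parseval applies, and the $L_2$-Poincar\'e inequality derived from the isoperimetric dimension then delivers the required dimension-free constant $(2\,C_{iso}\,C_\infty)^2$.
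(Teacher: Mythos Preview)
Your lower bound on $\|c_i\|_{L_1}$ and the final ``assembly'' step (estimating $\sum_i \Lip(f_i)^{-\delta}$ via the Lipschitz growth function) are correct and match the paper. The genuine gap is exactly where you flag it: the upper bound $\sum_i \|c_i\|_{L_1(\rho)}^2 \le (2C_{iso}C_\infty)^2$. Bessel's inequality, integrated against $\rho$, only yields $\sum_i \|c_i\|_{L_2(\rho)}^2/\|f_i\|_{L_2(\mu)}^2 \le C_{iso}$, and the passage from $L_2(\rho)$ to $L_1(\rho)$ loses $\rho(\Omega)^{1/2}$, which is not controlled. Your proposed remedy does not close this: the identity $\|g\|_{L_1(\rho)} = \|g\|_{L_2(\rho)}^2$ for $\{-1,0,1\}$-valued $g$ applies to the \emph{coordinate functions} $\omega\mapsto \Phi(v)(\omega)$, not to the coefficients $c_i(\omega) = \langle f_i,\mathbf{1}_{A_\omega}-\mu(A_\omega)\rangle_{L_2(\mu)}$, which take arbitrary real values regardless of how the cut representation is centered. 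So the Schoenberg trick does not upgrade the Bessel estimate to an $L_1(\rho)$ bound on the $c_i$'s.

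The paper avoids this obstacle by working on the dual side and replacing the cut-cone computation with $1$-summing operator theory. After the reduction $\cdist{L_1}(\Wa_1(G)) = \cdist{L_1}^{lin}(\LF(G))$, one takes a linear $T\colon \LF(G)\to \ell_1^N$ and considers the adjoint $T^*\colon \ell_\infty^N\to \Lip_{0,\mu}(G)$. The chain
\[
\ell_\infty^N \xrightarrow{T^*} \Lip_{0,\mu} \xrightarrow{\iota} W^{1,1}_{0,\mu}(\nu,\sd) \xrightarrow{\iota_{sob}} L_{\delta'}(\mu) \xrightarrow{\cF} \ell_\delta(J),
\]
with $\cF(g)=(\bE_\mu[gf_j])_{j\in J}$, is $1$-summing with $\pi_1 \le 2C_{iso}C_\infty D$: the point is that $\iota$ factors through the canonical inclusion $L_\infty(\nu)\hookrightarrow L_1(\nu)$ via $\nabla_\sd$, hence $\pi_1(\iota)\le 1$, while $\|\iota_{sob}\|\le 2C_{iso}$ by the Sobolev inequality and $\|\cF\|_{L_{\delta'}\to\ell_\delta}\le C_\infty$ by Riesz--Thorin (this uses $\delta'\le 2$, i.e.\ $\delta\ge 2$). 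The elementary lattice fact that a $1$-summing map $R\colon \ell_\infty^N\to \ell_\delta(J)$ is order-bounded (take $b:=\sum_i |R(e_i)|$) then produces a single $b\in\ell_\delta(J)$ with $\|b\|_\delta \le 2C_{iso}C_\infty D$ dominating every $|\cF(f_j)|/\Lip(f_j) \succeq C_1^{-2}e_j/\Lip(f_j)$. This is precisely the dimension-free $\ell_\delta$-bound your cut-cone argument was unable to reach, and it yields $\sum_j \Lip(f_j)^{-\delta}\le (2C_{iso}C_1^2 C_\infty D)^\delta$ directly.
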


\begin{rema}
Note that in Theorem~\ref{thm:generalWasserstein}, it must hold that the dimension $\delta$ is at least 2. For graphs $G$ whose dimensions are strictly between 1 and 2, like the Laakso graphs $\La_1^{\os n}$ of Figure~\ref{fig:Laakso}, we do not know how to prove nontrivial lower bounds for $\cdist{L_1}(\Wa_1(G))$.
\end{rema}

Theorem~\ref{thm:diamondWasserstein} follows immediately from Theorem \ref{thm:generalWasserstein}, the observation that $\log|\dia_{k,k}^{\os n}|= \Theta_k(n)$, and the following theorem.

\begin{theoalpha}[Isoperimetric and Lipschitz-spectral dimensions of generalized diamond graphs] \label{thm:diamonddimensions}
Fix $k,m \in \bN$, and let $\sd$ be the shortest path metric on $\sD_{k,m}^{\os n}$, $\mu$ the degree-probability measure on $V(\sD_{k,m}^{\os n})$, and $\nu$ the uniform probability measure on $E(\sD_{k,m}^{\os n})$. Then $\sD_{k,m}^{\os n}$ has $(\mu,\nu,\sd)$-isoperimetric dimension $1 + \frac{\log m}{\log k}$ with constant $C_{iso} \leq \frac{m}{2}$ and $(\mu,\sd)$-Lipschitz spectral profile of dimension $1 + \frac{\log m}{\log k}$ and bandwidth $k^n$ with constants $C_{1} \leq 6$, $C_{\infty} \leq 1$, and $C_{\gamma} \leq 2k^2m^2$.
\end{theoalpha}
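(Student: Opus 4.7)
The plan is to establish both dimension bounds by induction on $n$, exploiting the recursive description of $\sD_{k,m}^{\os n}$ as $\sD_{k,m}$ with each of its $km$ edges replaced by a copy of $\sD_{k,m}^{\os (n-1)}$. This is compatible with the vertex count growing as $(km)^n$ and the diameter as $k^n$, matching the target dimension $\delta = \log(km)/\log k = 1 + \log m/\log k$.

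For the isoperimetric part, I would first verify the base case $n=1$ directly: with $\mu$ the degree-probability measure and $\nu$ the uniform edge-probability on $\sD_{k,m}$, the perimeter is $|\partial A|/(km)$, and the bound $\min(\mu(A),\mu(A^c))^{(\delta-1)/\delta}\le \tfrac{m}{2}\Per_{\nu,\sd}(A)$ reduces to a short combinatorial case analysis on how many of the $m$ parallel length-$k$ paths $A$ ``straddles''; the constant $m/2$ is saturated when $A$ consists of a single pole. For the inductive step, I would index the $km$ sub-copies of $\sD_{k,m}^{\os (n-1)}$ sitting inside $\sD_{k,m}^{\os n}$ by the edges $e$ of an outer $\sD_{k,m}$, write $K_e$ for each sub-copy, and observe that the restricted measures on $K_e$ are (up to normalization) precisely the degree- and uniform-edge-measures on $\sD_{k,m}^{\os (n-1)}$. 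Applying the inductive hypothesis inside each $K_e$ yields internal perimeter estimates; combining these with the base-case inequality applied coarsely to the outer $\sD_{k,m}$ -- using mass fractions $\alpha_e = \mu(A\cap K_e)$ in place of $\{0,1\}$ subset indicators -- via a Jensen / power-mean argument should close the induction with the same constant $m/2$.

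For the Lipschitz-spectral profile, the monotonicity of $\gamma_F$ together with the identity $s^\delta \le k^{n\delta} = (km)^n$ on $[1,k^n]$ shows that it is enough to produce a family of at least $(km)^n/C_\gamma$ mutually orthogonal $1$-Lipschitz functions satisfying the uniform $L_1$ and $L_\infty$ bounds. I would build such a family by decomposing $\sD_{k,m}^{\os n}$ into its nested sub-copies of $\sD_{k,m}^{\os j}$ for $j=1,\dots,n$ and, at each of the $(km)^{n-j}$ sub-copies at scale $j$, placing a fixed orthogonal family of pole-vanishing ``base'' functions attached to the outer $\sD_{k,m}$ skeleton of that sub-copy -- for instance, tent-shaped functions along each of the $m$ parallel arms, linearly interpolated through the interior sub-sub-copies, combined into mean-zero orthogonal linear combinations across the $m$ arms. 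These vanish at the poles of their host sub-copy, have $L_\infty\le 1$, retain an $L_1$ lower bound after weighting by the degree measure, and have Lipschitz constant at most $1$. Orthogonality across disjoint sub-copies is automatic from disjoint support, and orthogonality between a level-$j$ function and a level-$j'$ function ($j'<j$) nested inside it follows from choosing the level-$j'$ base functions to be orthogonal to the affine interpolants by which the level-$j$ function descends into the smaller sub-copy. The total count $\sum_{j=1}^n m\cdot(km)^{n-j}\approx (km)^n/k$ then yields $C_\gamma = O(k^2m^2)$ once the normalization losses in $C_1$ and $C_\infty$ are absorbed.

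The hard part will be the inductive recombination step for the isoperimetric inequality -- arranging the aggregation of the internal perimeter contributions from each $K_e$ with the inter-copy boundary contribution via a suitably chosen Jensen-type estimate so that the constant $m/2$ does not degrade at each level -- together with the detailed design of the level-$j$ base functions, in particular securing orthogonality against the affine extensions descending from parent sub-copies and pinning the $L_1$ constant down to $C_1\le 6$. Once those combinatorial and construction details are in place, the Lipschitz-counting step is a routine geometric-sum computation.
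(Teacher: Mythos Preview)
Your isoperimetric outline is in the right spirit---the paper also proceeds by relating the isoperimetric ratio on $H\os G$ to data on $H$ and $G$ separately, and then inducts on $n$. The device that makes the recombination close without the constant degrading is one you have not identified: the paper strengthens the inductive hypothesis by taking the infimum of the isoperimetric ratio over \emph{all} edge-weighted induced vertex measures $\mu_\alpha(\nu)$, $\alpha\in(0,1)^{E}$, not just the degree measure $\alpha\equiv\tfrac12$. When a set $S$ in $H\os G$ has $N(S)=0$ (each sub-copy either entirely inside or entirely outside $S$, except on boundary edges), the natural comparison measure on $V(H)$ is $\mu_\beta(\nu_H)$ for a specific $\beta$ recording the $G$-mass fractions $\mu_{G,\alpha_e}(S_e)$---exactly the ``mass fractions $\alpha_e$'' you mention---and only the $\alpha$-uniform hypothesis on $H$ controls this. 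Your Jensen/power-mean phrasing may ultimately amount to the same thing, but as stated it does not pin down this strengthening or the filling-in dichotomy needed when $S$ contains both endpoints of an outer edge but not the whole sub-copy.

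There is a genuine gap in the Lipschitz-spectral construction. You place base functions on each of the $(km)^{n-j}$ sub-copies of $\sD_{k,m}^{\os j}$ at scale $j$, using disjointness of support for orthogonality across sub-copies at the same scale. But a function supported on a single scale-$j$ sub-copy with $\|f\|_\infty\le 1$ satisfies
\[
\|f\|_{L_1(\mu)} \le \mu(\text{that sub-copy}) = (km)^{-(n-j)},
\]
since $\mu$ is the degree-probability measure on the whole graph. For any $j<n$ this is far below the required uniform bound $\|f\|_{L_1(\mu)}\ge C_1^{-1}=1/6$. Only the $O(m)$ functions at the top scale $j=n$ satisfy the $L_1$ condition; the rest of your $\sum_{j}m(km)^{n-j}\approx (km)^n/k$ count is unusable, and you are nowhere near the $(km)^n/C_\gamma$ functions you need.

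The paper's remedy is to avoid localization altogether. At each inductive step from $G^{\os(n-1)}$ to $G^{\os n}$, instead of placing one copy of the base function $\phi$ per sub-copy, it forms products $f_2\os\phi$ where $f_2\colon E(G^{\os(n-1)})\to\{\pm1,0\}$ ranges over $\sim\tfrac12(km)^{n-1}$ rows of a Hadamard matrix. Each $f_2\os\phi$ is spread over essentially all sub-copies, so $\|f_2\os\phi\|_{L_1}=\|f_2\|_{L_1}\|\phi\|_{L_1}\ge 1/6$, and orthogonality of the new functions comes from orthogonality of the $f_2$'s in $L_2(\nu_{G^{\os(n-1)}})$ rather than from disjoint support. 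Making this compatible with the barycentrically-extended functions from the previous step forces a strengthened notion of orthogonality (``strong $\nu$-orthogonality'' of the induced edge-functions $f_\pm$) for the induction to close.
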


\noindent We refer to Corollary \ref{ex:isop-diamonds} and Corollary \ref{ex:spec-diamonds} for the proof of Theorem~\ref{thm:diamonddimensions}.

Our proof of Theorem~\ref{thm:generalWasserstein} follows the same outline as Naor-Schechtman's proof of $\cdist{L_1}(\Wa_1([n]^2)) = \Omega(\sqrt{\log n})$. The first step is to make the following reduction to linear maps: for $\banX$ a finite-dimensional Banach space, define $\cdist{L_1}^{lin}(\banX) := \inf_T \|T\| \cdot \|T^{-1}\|$, where the infimum is over all $N \in \bN$ and \emph{linear} injections $T: \banX \to \ell_1^N$. By \cite[Lemma 3.1]{NS07} (which is only stated for planar grids, but the proof obviously works for any finite metric space) we have, for any finite metric space $\metXd$,
	\begin{equation} \label{eq:Wa->LF}
    		\cdist{L_1}(\Wa_1(\metX)) = \cdist{L_1}^{lin}(\LF(\metX)),
	\end{equation}
where $\LF(\metX)$ is the \emph{Lipschitz-free space} over $\metX$; in our setting it is the Banach dual to the space $\Lip_0(\metX)$ of real-valued Lipschitz functions on $\metX$ vanishing at a fixed basepoint $x_0 \in \metX$. From there, Naor and Schechtman use a discrete version of an argument by Kislyakov \cite{Kislyakov75} to prove the necessary distortion estimates for an arbitrary linear $T\colon \LF([n]^2) \to \ell_1^N$. In the present work, we identify the precise geometric data of $G$ needed to run Kislyakov's argument, and we are naturally led to isolate the isoperimetric and Lipschitz-spectral dimensions as the key ingredients.

In Section \ref{sec:iso->Sob}, we review Sobolev spaces and prove a general Sobolev inequality on ``measured metric graphs'' with a given isoperimetric dimension (Theorem~\ref{thm:iso->Sob}). The proof technique we use is no different than well-known existing ones (see \cite{FedererFleming60} and the thorough exposition from \cite{BobkovHoudre97} in the smooth setting, or \cite{Chung97}, \cite{CGY00}, and \cite{Ostrovskii05} for the discrete setting) but we include it nonetheless because the general inequality we require does not seem to appear in the literature.

In Section \ref{sec:Kislyakov}, we prove our adaptation of Kislyakov's argument, namely Theorem~\ref{thm:Kislyakov}. Theorem~\ref{thm:generalWasserstein} follows immediately from \eqref{eq:Wa->LF} and Theorem~\ref{thm:Kislyakov}. An important part of the argument is that 1-summing maps from $\ell_\infty^N$ spaces to Banach lattices are order-bounded. In the original proof \cite{Kislyakov75} as well as the discretized one \cite{NS07}, this fact is proved using the Pietsch factorization theorem. In Lemma \ref{lem:orderbndd}, we provide a short, self-contained proof.

In the final sections, we investigate the behavior of isoperimetric and Lipschitz-spectral dimensions under $\os$-products, and we obtain exact computations in the case of $\os$-powers of certain graphs. In Section \ref{sec:slash-review}, we review $\os$-products of graphs and corresponding operations on measures, metrics, and functions. In Section \ref{sec:isoperimetric-inequalities}, we prove general results on isoperimetric inequalities of $\os$-products (Theorem~\ref{thm:slash-isop}) and $\os$-powers (Corollary~\ref{cor:slash-isop}), and in Section \ref{sec:Lip-spec}, we prove a general theorem on the Lipschitz-spectral profiles of $\os$-powers (Corollary~\ref{cor:spec-slashpower}). Theorem~\ref{thm:diamonddimensions} follows from Examples~\ref{ex:isop-diamonds} and \ref{ex:spec-diamonds} of these sections.

\section{Sobolev and isoperimetric inequalities}
\label{sec:iso->Sob}

In this section we recall the definitions of the Sobolev spaces on graphs that will be used in the subsequent sections.

Given a graph $G$ and a geodesic metric $\sd$ on $V(G)$, one can define a linear operator $\nabla_\sd$, which for any function $f \colon V(G) \to \bR$, returns its ``$\sd$-derivative'' as the function $\nabla_\sd f \colon E(G) \to \bR$ defined by 

\begin{equation*}
\nabla_\sd f(e) \eqd \frac{f(e^+) - f(e^-)}{\sd(e)}.
\end{equation*}

The following lemma, which says that the operator $\nabla_\sd$ commutes with integration, will come in handy when the time comes to prove the coarea formula. 

\begin{lemm}
Let $\{f_t \colon V(G) \to [0,\infty)\}_{t \in [0,\infty)}$ be a collection of functions. If for all $x\in V(G)$, the map $t \mapsto f_t(x)$ is integrable, then for all $e\in E(G)$, the map $t \mapsto \nabla_\sd (f_t)(e)$ is integrable and
\begin{equation}\label{eq:commute}
\nabla_\sd F(e) = \int_{0}^\infty \nabla_\sd f_t(e)dt,
\end{equation}
where $F(x) = \int_{0}^\infty f_t(x) dt$.
\end{lemm}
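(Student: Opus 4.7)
The lemma is a routine interchange-of-order statement, and the plan is to unwind the definition of $\nabla_\sd$ and appeal to linearity of the Lebesgue integral on $[0,\infty)$. I would split the argument into two short steps.

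First, I would establish integrability of $t \mapsto \nabla_\sd(f_t)(e)$. Since each $f_t$ takes values in $[0,\infty)$, for every $e \in E(G)$ we have the pointwise-in-$t$ bound
\[
|\nabla_\sd f_t(e)| \;=\; \frac{|f_t(e^+) - f_t(e^-)|}{\sd(e)} \;\le\; \frac{f_t(e^+) + f_t(e^-)}{\sd(e)},
\]
and the right-hand side is integrable in $t$ because $e^+, e^- \in V(G)$ and the hypothesis asserts integrability of $t \mapsto f_t(x)$ at every vertex $x$. In particular $\int_0^\infty f_t(e^\pm)\,dt$ are both finite.

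Second, I would compute the identity directly. By the definitions of $\nabla_\sd$ and of $F$,
\[
\nabla_\sd F(e) \;=\; \frac{F(e^+) - F(e^-)}{\sd(e)} \;=\; \frac{1}{\sd(e)}\left(\int_0^\infty f_t(e^+)\,dt \;-\; \int_0^\infty f_t(e^-)\,dt\right).
\]
Since both integrals on the right are finite, linearity of the integral collapses the difference into a single integral of $f_t(e^+) - f_t(e^-)$, and dividing by $\sd(e)$ pulls inside to give $\int_0^\infty \nabla_\sd f_t(e)\,dt$, which is precisely \eqref{eq:commute}. There is no genuine obstacle here; the only thing worth being careful about is verifying integrability before invoking linearity, which the nonnegativity hypothesis on $f_t$ makes essentially automatic via the absolute bound above.
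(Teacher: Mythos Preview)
Your proposal is correct and follows essentially the same approach as the paper: both verify integrability (you give an explicit pointwise bound while the paper merely asserts it ``follows immediately''), then expand $\nabla_\sd F(e)$ by definition and apply linearity of the integral to combine the two finite integrals into one.
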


\begin{proof}
The integrability of $t \mapsto \nabla_\sd (f_t)(e)$ follows immediately from the integrability of $t \mapsto f_t(x)$. For all $e \in E(G)$, we have
	\begin{align*}
		\nabla_\sd F(e) = \frac{F(e^+) - F(e^-)}{\sd(e)} & = \frac{1}{\sd(e)} \left( \int_{0}^\infty f_t(e^+) dt - \int_{0}^\infty f_t(e^-) dt \right)\\
			& =  \int_{0}^\infty \frac{ f_t(e^+) - f_t(e^-) }{\sd(e)} dt 
			 = \int_{0}^\infty \nabla_\sd f_t(e)dt.
	\end{align*}
\end{proof}

If $G$ is a graph equipped with a probability measure $\nu$ on $E(G)$, then given a function $f\colon (V(G), \sd) \to \bR$ and $p\in[1,\infty]$, we define the $(1,p)$-Sobolev semi-norm (with respect to $\nu$ and $\sd$) of $f$ by 
	\begin{align*}
		\norm{f}_{W^{1,p}(\nu, \sd)}& \eqd \norm{ \nabla_\sd f}_{L_p(\nu)} = \Ex_{\nu} [\abs{\nabla_\sd f}^p]^{1/p} \\
		& = \left[ \int_{E(G)} \abs{\nabla_\sd f(e)}^p d\nu(e)\right]^{1/p}
	  =  \left[ \sum_{e\in E(G)} \frac{ \abs{ f(e^+) - f(e^-) }^p }{ \sd(e)^p } \nu(e)\right]^{1/p},
	\end{align*}
with the usual convention when $p=\infty$. By the geodesicity assumption, it holds that $\norm{f}_{W^{1,\infty}(\nu, \sd)} \leq \Lip(f)$, with equality if and only if $\nu$ is fully supported.
Note that the Sobolev norms do not depend on the orientation chosen to unambiguously define the derivative.

The following simple additivity property of the $(1,1)$-Sobolev norm will be useful in the ensuing arguments.

\begin{lemm}[Additivity of the $(1,1)$-Sobolev semi-norm] \label{lem:disjointsum}
Let $G$ be a graph equipped with a probability measure $\nu$ on $E(G)$ and a geodesic metric $\sd$ on $V(G)$. If for any $f \colon V(G) \to \bR$, we let $f_+ := \max\{0, f\}$ and $f_- := -\min\{0, f\}$, then 
\begin{equation*}
\norm{ f }_{W^{1,1}(\nu, \sd)} = \norm{f_+}_{W^{1,1}(\nu, \sd)} + \norm{f_-}_{W^{1,1}(\nu, \sd)}.
\end{equation*}
\end{lemm}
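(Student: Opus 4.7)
The plan is to reduce the identity to a purely pointwise statement about pairs of real numbers, and then verify that statement by a short case analysis on signs.

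First, I would unpack the definition of $\norm{\cdot}_{W^{1,1}(\nu,\sd)}$: since all three terms are sums of the form $\sum_{e\in E(G)} \frac{|g(e^+)-g(e^-)|}{\sd(e)}\,\nu(e)$ with the common positive weights $\nu(e)/\sd(e)$, the identity would follow if I could establish, edge-by-edge, the pointwise claim
\begin{equation*}
    |f(e^+)-f(e^-)| \;=\; |f_+(e^+)-f_+(e^-)| + |f_-(e^+)-f_-(e^-)|.
\end{equation*}
Thus the entire lemma reduces to showing that for every pair $a,b\in\bR$ one has $|a-b|=|a_+-b_+|+|a_--b_-|$, where $a_+=\max\{0,a\}$ and $a_-=-\min\{0,a\}$.

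To prove this pointwise identity, I would split into cases based on the signs of $a$ and $b$. If $a,b\geq 0$, then $a_-=b_-=0$ and $a_+=a$, $b_+=b$, so both sides equal $|a-b|$. The case $a,b\leq 0$ is symmetric: $a_+=b_+=0$ and the right-hand side reduces to $|(-a)-(-b)|=|a-b|$. The remaining case is when $a$ and $b$ have strictly opposite signs; by symmetry assume $a\geq 0>b$. Then $a_+=a$, $b_+=0$, $a_-=0$, $b_-=-b$, so the right-hand side is $a+(-b)=a-b$, which equals $|a-b|$ since $a-b>0$.

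The main, and really the only, obstacle is bookkeeping across the three cases; no analytic machinery beyond the definition of $\nabla_\sd$ is required. Once the pointwise identity is established, summing it over $e\in E(G)$ against the positive weights $\nu(e)/\sd(e)$ immediately yields $\norm{f}_{W^{1,1}(\nu,\sd)}=\norm{f_+}_{W^{1,1}(\nu,\sd)}+\norm{f_-}_{W^{1,1}(\nu,\sd)}$, completing the proof.
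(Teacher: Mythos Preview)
Your proof is correct and follows essentially the same approach as the paper: both arguments reduce to a case analysis on the signs of $f$ at the two endpoints of an edge. The paper organizes this by partitioning $E(G)$ into the four sets $P,N,M_1,M_2$ and tracking $\nabla_\sd f$, $\nabla_\sd f_+$, $\nabla_\sd f_-$ on each, whereas you phrase it as the pointwise real-number identity $|a-b|=|a_+-b_+|+|a_--b_-|$; the content is identical.
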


\begin{proof}
Let $f \colon V(G) \to \bR$. We need to consider 4 sets of edges:
\begin{itemize}
\item $P = \{e \in E: f(e^-), f(e^+) \geq 0\}$ and  $N = \{e \in E: f(e^-), f(e^+) \leq 0\}$, 
\item $M_1 = \{e \in E: f(e^-) < 0 < f(e^+)\}$ and $M_2 = \{e \in E: f(e^+) < 0 < f(e^-)\}$.
\end{itemize}

We clearly have that $\nabla_\sd f, \nabla_\sd (f_+), \nabla_\sd (f_-)$ vanish on $P \cap N$ and that all other pairwise intersections are empty. Hence, for each $g \in \{f, f_+, f_-\}$,
\begin{equation}\label{eq:Sob_split}
    \norm{g}_{W^{1,1}(\nu, \sd)} = \norm{ \nabla_\sd (g) }_{L_1(\nu)} = \norm{ \nabla_\sd (g ) \1_P }_{L_1(\nu)} + \norm{ \nabla_\sd (g) \1_N} _{L_1(\nu)}  + \norm{ \nabla_\sd (g) \1_{M_1} }_{L_1(\nu)}  + \norm{ \nabla_\sd (g) \1_{M_2} }_{L_1(\nu)}.
\end{equation}
Furthermore, it also clearly holds that:
\begin{enumerate}
\item[$(i_1)$] $\abs{\nabla_\sd (f) \1_P} = \abs{ \nabla_\sd (f_+) \1_P }$ and $\abs{\nabla_\sd (f) \1_N} = \abs{ \nabla_\sd (f_-) \1_N }$,
\item[$(i_2)$] $\abs{\nabla_\sd (f) \1_{M_i}} = \abs{ \nabla_\sd (f_+) \1_{M_i} } + \abs{ \nabla_\sd (f_-) \1_{M_i} }$, for $i \in \{1,2\}$,
\item[$(i_3)$] $\abs{ \nabla_\sd (f_+) \1_{N} } = 0$  and $\abs{ \nabla_\sd (f_-) \1_{P} } = 0$.
\end{enumerate}
Combining everything yields
\begin{align*}
    \norm{ f }_{W^{1,1}(\nu, \sd)} & \stackrel{\eqref{eq:Sob_split}}{=} \norm{ \nabla_\sd (f) \1_P }_{L_1(\nu)} + \norm{ \nabla_\sd (f) \1_N }_{L_1(\nu)} + \norm{ \nabla_\sd (f) \1_{M_1}}_{L_1(\nu)} + \norm{ \nabla_\sd (f) \1_{M_2}}_{L_1(\nu)} \\
    & \hskip -.1in \stackrel{(i_1)\land (i_2)}{=} \norm{ \nabla_\sd (f_+) \1_P}_{L_1(\nu)} + \norm{ \nabla_\sd (f_-) \1_N}_{L_1(\nu)} \\
    &  \hskip .1in + \norm{ \nabla_\sd (f_+) \1_{M_1} }_{L_1(\nu)} + \norm{ \nabla_\sd (f_-) \1_{M_1} }_{L_1(\nu)} + \norm{ \nabla_\sd (f_+) \1_{M_2} }_{L_1(\nu)} + \norm{ \nabla_\sd (f_-) \1_{M_2} }_{L_1(\nu)}\\
    & \hskip -.1in \stackrel{\eqref{eq:Sob_split} \land (i_3)}{=}  \norm{ f_+}_{W^{1,1}(\nu, \sd)} + \norm{ f_- }_{W^{1,1}(\nu, \sd)}.
\end{align*}
\end{proof}

The equivalence between isoperimetric and Sobolev inequalities is well-known, and the following theorem, which will be used in a crucial way in the sequel, is not new. However, because we could not locate a statement with this degree of generality, we give its elementary proof for the convenience of the reader.

\begin{theo}[Sobolev inequality from isoperimetric inequality] \label{thm:iso->Sob}
Let $G$ be a graph, $\mu$ a probability measure on $V(G)$, $\nu$ a probability measure on $E(G)$, and $\sd$ a geodesic metric on $V(G)$. If $G$ has $(\mu,\nu, \sd)$-isoperimetric dimension $\delta$ with constant $C$, then for every map $f \colon (V(G), \sd) \to  \bR$,
	\begin{equation*}
		\norm{ f- \Ex_\mu f }_{L_{\delta'}(\mu)} \leq 2C \norm{ f }_{W^{1,1}(\nu, \sd)},
	\end{equation*}
where $\Ex_\mu f = \int_{V(G)} f(x)d\mu(x)$, and $\delta'$ is the H\"older conjugate exponent of $\delta$, i.e. $\frac{1}{\delta} +\frac{1}{\delta'} =1$.
\end{theo}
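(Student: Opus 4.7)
I plan to follow the classical Federer--Fleming path from isoperimetric inequality to Sobolev inequality, carefully adapted to the discrete graph setting via the commutation lemma \eqref{eq:commute}. The key idea is to decompose $f$ around a median, apply the layer-cake identity to each piece, and then use the isoperimetric inequality level by level through a coarea formula. The main geometric input is that subtracting a median makes each superlevel set have $\mu$-mass at most $1/2$, which eliminates the $\min$ in Definition~\ref{def:iso-dim}.

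\textbf{Step 1: Median reduction.} Let $m \in \mathbb{R}$ be a median of $f$ with respect to $\mu$, so that $\mu(\{f>m\}) \le 1/2$ and $\mu(\{f<m\}) \le 1/2$. Set $g = (f-m)_+$ and $h = (f-m)_-$. Since $\nabla_\sd$ kills constants, $\norm{f}_{W^{1,1}(\nu,\sd)} = \norm{f-m}_{W^{1,1}(\nu,\sd)}$, and by Lemma~\ref{lem:disjointsum} this splits as $\norm{g}_{W^{1,1}(\nu,\sd)} + \norm{h}_{W^{1,1}(\nu,\sd)}$. By the triangle inequality in $L_{\delta'}(\mu)$, it therefore suffices to prove that for any nonnegative $g\colon V(G) \to [0,\infty)$ with $\mu(\supp g) \le 1/2$, one has
\begin{equation*}
    \norm{g}_{L_{\delta'}(\mu)} \le C \, \norm{g}_{W^{1,1}(\nu,\sd)}.
\end{equation*}
After showing this for $g$ and $h$, adding the two estimates and passing from $m$ to $\Ex_\mu f$ via $|\Ex_\mu f - m| \le \norm{f-m}_{L_1(\mu)} \le \norm{f-m}_{L_{\delta'}(\mu)}$ yields the factor $2C$ in the conclusion.

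\textbf{Step 2: Coarea formula.} For $g \ge 0$ write $g(x) = \int_0^\infty \mathbf{1}_{\{g>t\}}(x)\,dt$. Applying the commutation identity \eqref{eq:commute} with $f_t = \mathbf{1}_{\{g>t\}}$ gives $\nabla_\sd g(e) = \int_0^\infty \nabla_\sd \mathbf{1}_{\{g>t\}}(e)\,dt$. The crucial observation is that for each fixed edge $e$, the sign of $\nabla_\sd \mathbf{1}_{\{g>t\}}(e)$ is constant in $t$ (it is determined by whether $g(e^+) \gtrless g(e^-)$, and vanishes outside the interval between $g(e^-)$ and $g(e^+)$). Pulling the absolute value inside and integrating in $\nu$ then yields the coarea formula
\begin{equation*}
    \norm{g}_{W^{1,1}(\nu,\sd)} = \int_0^\infty \Per_{\nu,\sd}(\{g>t\})\,dt.
\end{equation*}

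\textbf{Step 3: Minkowski plus isoperimetry.} By Minkowski's integral inequality applied to the layer-cake representation of $g$,
\begin{equation*}
    \norm{g}_{L_{\delta'}(\mu)} = \Bigl\| \int_0^\infty \mathbf{1}_{\{g>t\}}\,dt \Bigr\|_{L_{\delta'}(\mu)} \le \int_0^\infty \mu(\{g>t\})^{1/\delta'}\,dt = \int_0^\infty \mu(\{g>t\})^{\frac{\delta-1}{\delta}}\,dt.
\end{equation*}
Because $g$ is supported on a set of $\mu$-measure at most $1/2$, we have $\mu(\{g>t\}) \le 1/2 \le \mu(\{g>t\}^c)$ for every $t>0$, so the isoperimetric hypothesis applies directly without the $\min$ and gives $\mu(\{g>t\})^{(\delta-1)/\delta} \le C \cdot \Per_{\nu,\sd}(\{g>t\})$. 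Combining with Step~2 yields $\norm{g}_{L_{\delta'}(\mu)} \le C \norm{g}_{W^{1,1}(\nu,\sd)}$, which is what Step~1 needed.

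\textbf{Expected difficulty.} The only subtle point is the constant-sign argument in Step~2 that licenses pulling the absolute value across the integral in $t$: this is where nonnegativity of $g$ and the monotonicity of $t \mapsto \mathbf{1}_{\{g>t\}}$ are essential, and it is precisely the reason we first split $f - m$ into positive and negative parts in Step~1 rather than attempting to treat $f$ itself. Everything else is bookkeeping with Minkowski, H\"older, and the triangle inequality.
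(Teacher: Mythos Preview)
Your proposal is correct and follows essentially the same route as the paper: median reduction (with the factor $2$ arising from passing between median and mean via $|\Ex_\mu f - m| \le \|f-m\|_{L_{\delta'}(\mu)}$), splitting into positive and negative parts via Lemma~\ref{lem:disjointsum}, the coarea formula proved via the constant-sign-in-$t$ observation, and then Minkowski plus the isoperimetric hypothesis on each superlevel set. The paper organizes the median-to-mean reduction before the main estimate and combines the two pieces using $\|g\|_{L_{\delta'}}^{\delta'} = \|g_+\|_{L_{\delta'}}^{\delta'} + \|g_-\|_{L_{\delta'}}^{\delta'}$ rather than the triangle inequality, but these are cosmetic differences.
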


The proof of Theorem \ref{thm:iso->Sob} relies on two classical but extremely useful lemmas. The first lemma is sometimes called the layer-cake representation lemma.

\begin{lemm}[Layer-cake representation]
Let $X$ be any set and $f \colon X \to [0,\infty)$ be any function. Then,
	\begin{equation}\label{eq:layer-cake}
		f = \int_{0}^\infty 1_{\{f > t\}}dt.
	\end{equation}
\end{lemm}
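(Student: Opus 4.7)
The plan is to prove the identity pointwise, reducing it to the elementary computation $\int_0^{a} 1\, dt = a$ for each nonnegative real number $a$. Since the asserted equation $f = \int_0^\infty 1_{\{f>t\}}\,dt$ is an identity between functions on $X$, it suffices to verify equality at an arbitrary fixed $x \in X$.

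First I would fix $x \in X$ and unwind the indicator at this point. For any $t \in [0,\infty)$, observe that $1_{\{f>t\}}(x) = 1$ if $f(x) > t$ and $1_{\{f>t\}}(x) = 0$ otherwise; equivalently, as a function of $t$, the map $t \mapsto 1_{\{f>t\}}(x)$ equals $1_{[0, f(x))}(t)$. Substituting this description into the right-hand side reduces the integral to
\begin{equation*}
\int_0^\infty 1_{\{f>t\}}(x)\,dt \;=\; \int_0^\infty 1_{[0, f(x))}(t)\,dt \;=\; f(x),
\end{equation*}
where the final equality is the Lebesgue measure of the interval $[0, f(x))$. Since $x$ was arbitrary, this establishes the claimed equality of functions.

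There is essentially no obstacle: no measurability hypothesis on $f$ is invoked because the identity is a pointwise statement, and for each fixed $x$ the integrand $t \mapsto 1_{[0,f(x))}(t)$ is a bounded simple function whose Lebesgue integral is immediate. The only mild care needed is the convention that when $f(x) = 0$ the interval $[0, f(x))$ is empty so both sides are $0$, and when $f(x) = \infty$ (which is ruled out here since $f$ takes values in $[0,\infty)$) the standard convention $\int_0^\infty 1\,dt = \infty$ would still make both sides agree.
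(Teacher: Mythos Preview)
Your proof is correct and follows essentially the same approach as the paper: fix $x \in X$, rewrite $1_{\{f>t\}}(x)$ as $1_{[0,f(x))}(t)$, and integrate in $t$ to obtain $f(x)$.
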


\begin{proof}
For $x\in X$ and $t\in[0,\infty)$, simply observe that $\1_{\{f>t\}}(x) = \1_{[0,f(x))}(t)$. Therefore, for every $x\in X$,  $t \mapsto \1_{ \{ f>t \} }(x)$ is measurable, and hence
	\begin{equation*}
		\int_{0}^\infty \1_{\{f > t\}}(x) dt = \int_{0}^\infty \1_{[0,f(x))}(t) dt = f(x).
	\end{equation*}
\end{proof}

The second lemma, known as the coarea formula (originally due to Federer \cite{Federer59}) has been established in various settings (cf. \cite{CGY00}, \cite{Ostrovskii05}). Note that if the metric $\sd$ assigns constant diameter $\sd_0$ to all the edges, then the formula reduces to the classical equality 
	\begin{equation*}
		\int_{E(G)} \abs{\nabla_\sd f(e)} d\nu(e)  = \sd_0^{-1}\int_0^\infty \nu(\partial_G \{f > t\} ) dt .
	\end{equation*}

\begin{lemm}[Coarea formula]
Let $G$ be a graph, $\mu$ a probability measure on $V(G)$, $\nu$ a probability measure on $E(G)$, and $\sd$ a geodesic metric on $V(G)$. Let $f\colon V(G) \to [0,\infty)$ be a function. Then
	\begin{equation*}
		\norm{ f }_{W^{1,1}(\nu, \sd)}  = \int_0^\infty \Per_{\nu, \sd}( \{f > t\} ) dt .
	\end{equation*}
\end{lemm}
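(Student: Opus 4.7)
The plan is to combine the layer-cake representation with the commutation property \eqref{eq:commute} and then swap a sum and integral using Tonelli's theorem. First, I would apply the layer-cake formula \eqref{eq:layer-cake} to write $f = \int_0^\infty \1_{\{f>t\}} dt$. The family $\{f_t\}_{t\in[0,\infty)}$ with $f_t = \1_{\{f>t\}}$ satisfies the hypothesis of the commutation lemma because for each $x \in V(G)$, $t \mapsto \1_{\{f>t\}}(x) = \1_{[0,f(x))}(t)$ is integrable. Therefore, for every $e \in E(G)$,
\begin{equation*}
    \nabla_\sd f(e) = \int_0^\infty \nabla_\sd \1_{\{f>t\}}(e)\,dt.
\end{equation*}

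Next I would compute $\nabla_\sd \1_{\{f>t\}}(e) = \frac{1}{\sd(e)}\bigl(\1_{\{f>t\}}(e^+) - \1_{\{f>t\}}(e^-)\bigr)$, which takes values in $\{-\sd(e)^{-1},0,\sd(e)^{-1}\}$ and is nonzero precisely when $e \in \partial_G\{f>t\}$. The key observation is that the \emph{sign} of this quantity is independent of $t$: it is determined entirely by the sign of $f(e^+) - f(e^-)$. Consequently, one can pull the absolute value inside the integral, obtaining
\begin{equation*}
    \abs{\nabla_\sd f(e)} = \int_0^\infty \abs{\nabla_\sd \1_{\{f>t\}}(e)}\,dt = \int_0^\infty \frac{\1_{\partial_G\{f>t\}}(e)}{\sd(e)}\,dt.
\end{equation*}

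Finally, I would multiply by $\nu(e)$, sum over $e \in E(G)$, and apply Tonelli's theorem (valid since all integrands are nonnegative) to swap the sum and the integral:
\begin{equation*}
    \norm{f}_{W^{1,1}(\nu,\sd)} = \sum_{e \in E(G)} \nu(e) \abs{\nabla_\sd f(e)} = \int_0^\infty \sum_{e \in \partial_G\{f>t\}} \frac{\nu(e)}{\sd(e)}\,dt = \int_0^\infty \Per_{\nu,\sd}(\{f>t\})\,dt.
\end{equation*}

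The only subtle step is the sign argument that allows pulling $\abs{\cdot}$ inside the integral; everything else is bookkeeping. This is not really an obstacle since the sign of $\nabla_\sd \1_{\{f>t\}}(e)$ is forced to agree with the sign of $f(e^+) - f(e^-)$ whenever the numerator is nonzero, so all the nonvanishing contributions to $\int_0^\infty \nabla_\sd \1_{\{f>t\}}(e)\,dt$ have a common sign.
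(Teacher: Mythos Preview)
Your proof is correct and follows essentially the same approach as the paper's: both use the layer-cake representation together with the commutation lemma \eqref{eq:commute}, then justify pulling the absolute value inside the integral via the observation that the sign of $\nabla_\sd \1_{\{f>t\}}(e)$ does not depend on $t$, and finish by swapping sum and integral. The paper isolates the sign step as a separate claim and verifies it by a case analysis on whether $f(e^+)\ge f(e^-)$ or $f(e^+)\le f(e^-)$, but this is exactly the content of your ``sign is determined by $f(e^+)-f(e^-)$'' remark.
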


\begin{proof}
Given $f\colon V(G)\to [0,\infty)$, we compute
	\begin{align*}
    		\norm{ f }_{W^{1,1}(\nu, \sd)} &= \norm{ \nabla_\sd f }_{L_1(\nu)} \\
    		& \stackrel{\eqref{eq:layer-cake}}{=} \bnorm{ \nabla_\sd \left( \int_0^\infty \1_{ \{f > t\} } dt \right) }_{L_1(\nu)} \\
    		& \stackrel{\eqref{eq:commute}}{=} \bnorm{  \int_0^\infty \nabla_\sd \1_{ \{f > t\} } dt }_{L_1(\nu)} \\
    		&= \sum_{e \in E(G)} \nu(e) \left| \int_0^\infty \nabla_\sd \1_{\{f > t\}}(e)dt \right|.
	\end{align*}
Assuming the following claim:
\begin{clai}
	\begin{equation}\label{eq:coarea}
		\left|\int_0^\infty \nabla_\sd \1_{\{f > t\}}(e)dt \right| = \int_0^\infty \left| \nabla_\sd \1_{\{f > t\}}(e)\right| dt,
	\end{equation}
\end{clai}

we can conclude the proof as follows:
	\begin{align*}
		\sum_{e \in E(G)} \nu(e) \left|\int_0^\infty \nabla_\sd \1_{\{f > t\}}(e)dt \right| &= \sum_{e \in E(G)} \nu(e)  \int_0^\infty \left| \nabla_\sd \1_{\{f > t\}}(e)\right|dt \\
		&= \int_0^\infty \sum_{e \in E(G)} \nu(e)  \left| \nabla_\sd \1_{\{f > t\}}(e)\right|dt \\
		& = \int_0^\infty \sum_{e \in E(G)} \nu(e)  \left| \frac{ \1_{\{f > t\}}(e^+) - \1_{\{f > t\}}(e^-) }{\sd(e)}\right|dt 
		 = \int_0^\infty \sum_{e \in \partial_G \{f > t\} } \frac{ \nu(e) }{\sd(e)} dt.
	\end{align*}

Hence, it remains to prove \eqref{eq:coarea} for each fixed $e \in E(G)$. This will obviously hold if $\nabla_\sd \1_{\{f > t\}}(e) \geq 0$ for a.e. $t \in [0,\infty)$ or if $\nabla_\sd \1_{\{f > t\}}(e) \leq 0$ for a.e. $t \in [0,\infty)$. Let $e \in E(G)$. First suppose $f(e^+) \geq f(e^-)$. Then $\nabla_\sd \1_{\{f > t\}}(e) = \frac{1}{\sd(e)}$ whenever $t \in (f(e^-), f(e^+))$, and $\nabla_\sd \1_{\{f > t\}}(e) = 0$ whenever $t \not\in [f(e^-), f(e^+)]$. This proves \eqref{eq:coarea} in this case. In the other case $f(e^+) \leq f(e^-)$, we have $\nabla_\sd \1_{\{f > t\}}(e) = \frac{-1}{\sd(e)}$ whenever $t \in (f(e^+), f(e^+))$, and $\nabla_\sd \1_{\{f > t\}}(e) = 0$ whenever $t \not\in [f(e^+), f(e^-)]$. Again this proves \eqref{eq:coarea}.
\end{proof}

We are now ready to prove Theorem \ref{thm:iso->Sob}.
\begin{proof}[Proof of Theorem \ref{thm:iso->Sob}]
Assume $G$ has $(\mu, \nu, \sd)$-isoperimetric dimension $\delta$ with constant $C<\infty$, and let $\delta'$ be the H\"older conjugate of $\delta$. First observe that, for any $c \in \bR$,
	\begin{align*}
		\norm{ f- \Ex_\mu f }_{L_{\delta'}(\mu)} & \leq \norm{ f- c}_{L_{\delta'}(\mu)} + \norm{ \Ex_\mu (c-f) }_{L_{\delta'}(\mu)}\\
		& = \norm{ f- c}_{L_{\delta'}(\mu)} + \abs{ \Ex_\mu (c - f) }\\
		& \le \norm{ f - c }_{L_{\delta'}(\mu)} +  \norm{ f - c }_{L_{1}(\mu)}\\
		& \le \norm{ f - c }_{L_{\delta'}(\mu)} +  \norm{ f - c }_{L_{\delta'}(\mu)}
		= 2\norm{ f - c }_{L_{\delta'}(\mu)}.
	\end{align*}
Therefore, it suffices to prove  
	\begin{equation*}
		\norm{ f - \med(f) }_{L_{\delta'}(\mu)} \leq C\norm{ f }_{W^{1,1}(\nu, \sd)},
	\end{equation*}
where $\med(f) \in \bR$ is a \emph{median} of $f$, i.e. any real number $m$ such that $\mu( \{ f > m \} ) \leq \frac{1}{2}$ and $\mu( \{ f < m \} ) \leq \frac{1}{2}$ (which always exists). Set $g := f - \med(f)$. Since $\norm{ g }_{W^{1,1}(\nu, \sd)} = \norm{ f }_{W^{1,1}(\nu, \sd)}$, it suffices to prove
	\begin{equation} \label{eq:gSobolev}
		\norm{ g }_{L_{\delta'}(\mu)} \leq C\norm{ g }_{W^{1,1}(\nu, \sd)}.
	\end{equation}
Note that $\med(g) = 0$. Let $g_+ := \max\{g,0\}$ and $g_- := -\min\{g,0\}$. Then by definition of $\med(g)$, we have
\begin{align*}
    \mu( \{ g_+ > 0 \}) = \mu( \{ g > 0 \}) = \mu( \{ g > \med(g) \} ) \leq \frac{1}{2}, \\
\nonumber \mu( \{ g_- > 0 \}) = \mu( \{ g < 0 \} ) = \mu( \{ g<\med(g) \}) \leq \frac{1}{2},
\end{align*}
% This implies, for all $t \geq 0$,
% \begin{align*}
%     \bP(g_+ > t) \leq \bP(g_+ \leq t), \\
% \nonumber    \bP(g_- > t) \leq \bP(g_- \leq t),
% \end{align*}
and hence by definition of isoperimetric dimension we get
\begin{align*}
    \mu( \{ g_+ > t \})^{\frac{1}{\delta'}} \leq C \Per_{\nu, \sd}(  \{g_+ > t\} ), \\
\nonumber    \mu( \{ g_- > t \})^{\frac{1}{\delta'}} \leq C \Per_{\nu, \sd}(  \{ g_- > t\} ),
\end{align*}
for all $t \geq 0$.

Notice that the left-hand-sides of the above inequalities equal the $L_{\delta'}(\mu)$-norms of the indicator functions of the respective sets, and therefore
\begin{align} \label{eq:isoSob}
    \norm{ \1_{\{g_+ > t\}} }_{L_{\delta'}(\mu)} \leq C\Per_{\nu, \sd}(  \{g_+ > t\} ), \\
\nonumber    \norm{ \1_{\{g_- > t\}} }_{L_{\delta'}(\mu)}  \leq C\Per_{\nu, \sd}( \{g_- > t\} ).
\end{align}
Together with the fact that $g_+,g_-$ have disjoint supports and $g = g_+ - g_-$, we get
\begin{align*}
    \norm{ g }_{L_{\delta'}(\mu)}^{\delta'} &= \norm{ g_+ }_{L_{\delta'}(\mu)}^{\delta'} + \norm{ g_- }_{L_{\delta'}(\mu)}^{\delta'} \\
    &\overset{\eqref{eq:layer-cake}}{=} \bnorm{ \int_{0}^\infty \1_{\{g_+>t\}} dt }_{L_{\delta'}(\mu)}^{\delta'} + \bnorm{ \int_{0}^\infty \1_{\{g_->t\}} dt }_{L_{\delta'}(\mu)}^{\delta'} \\
    &\leq \left(\int_{0}^\infty \bnorm{ \1_{\{g_+>t\}} }_{L_{\delta'}(\mu)} dt\right)^{\delta'} +  \left(\int_{0}^\infty \bnorm{ \1_{\{g_->t\}} }_{L_{\delta'}(\mu)} dt\right)^{\delta'} \\
    &\overset{\eqref{eq:isoSob}}{\leq} \left(\int_{0}^\infty C \Per_{\nu, \sd}( \{g_+ > t\} ) dt\right)^{\delta'} +  \left(\int_{0}^\infty C \Per_{\nu, \sd}( \{g_- > t\} ) dt\right)^{\delta'} \\
    &\overset{\text{coarea}}{=} (C\norm{ g_+ }_{W^{1,1}(\nu, \sd)})^{\delta'} + (C\norm{ g_- }_{W^{1,1}(\nu, \sd)})^{\delta'} \\
    &\leq (C\norm{ g_+ }_{W^{1,1}(\nu, \sd)} + C\norm{ g_- }_{W^{1,1}(\nu, \sd)})^{\delta'} \\
    &\overset{\text{Lem }\ref{lem:disjointsum}}{=} (C\|g\|_{W^{1,1}(\nu, \sd})^{\delta'}.
\end{align*}
Taking the $\delta'$-root of each side proves \eqref{eq:gSobolev}.
\end{proof}

\section{A dimensionality interpretation of Kislyakov's argument} \label{sec:Kislyakov}
In this section, we delve into Naor-Schechtman's discretization of Kislyakov's argument. We pinpoint the crucial role of the two numerical parameters introduced in the introduction : the isoperimetric dimension (Definition \ref{def:iso-dim}) and the Lipschitz-spectral dimension (Definition \ref{def:Lip-spec-profile}). Fix a graph $G$ and a geodesic metric $\sd$ on $V(G)$. In the sequel, for $\mu$ a nonzero measure on $V(G)$ and $\nu$ a fully-supported measure on $E(G)$, we denote by $\Lip_{0, \mu}(V(G),\sd)$ the space of functions $f \colon V(G)\to \bR$ with $\bE_{\mu}[f] = 0$ equipped with the norm $\|f\|_{\Lip} := \|f\|_{W^{1,\infty}(\sd,\nu)}$. It is easily seen that the map $f \mapsto f - \Ex_\mu f$ is an onto isometric isomorphism between $\Lip_0(V(G),\sd)$ and $\Lip_{0,\mu}(V(G),\sd)$. Let $W_{0,\, \mu}^{1,1}(\sd,\nu)$ be the subspace of $W^{1,1}(\sd,\nu)$ consisting of those functions for which $\Ex_\mu f=0$. The map $f \mapsto f - \Ex_\mu f$ is also an onto isometric isomorphism between (the semi-normed space) $W^{1,1}(\sd,\nu)$ and (the normed space) $W_{0,\, \mu}^{1,1}(\sd,\nu)$.

Recall that a bounded linear map $R\colon \banX \to \banY$ between Banach spaces is \emph{1-summing} if there exists $C\in(0,\infty)$ such that
	\begin{equation}\label{eq:1-sum}
		\sum_{i=1}^N \norm{R(x_i)} \leq C \sup_{x^* \in B_{\banX^*}} \sum_{i=1}^N |\langle x^*,x_i \rangle|,
	\end{equation}
for every finite subset $\{x_i\}_{i=1}^N \subset \banX$. We denote the least such constant $C$ such that $\eqref{eq:1-sum}$ holds by $\pi_1(R)$. We begin with two basic facts concerning 1-summing maps. Their elementary proofs can be found in \cite[Chapter 3, Theorem 2.13]{DJT95}.

\goodbreak
\begin{lemm} \label{lem:1summingfacts} ~
\begin{enumerate}
    \item For any probability measure $\bP$, let $\iota_1$ be the formal identity from $L_\infty(\bP)$ to $L_1(\bP)$ and  $\banX$ be a subspace of $L_\infty(\bP)$, then $\iota_{1\restriction \banX}\colon \banX \to \iota_1(\banX)$ is 1-summing with $\pi_1(\iota_{1\restriction \banX}) = 1$.
    \item If $Q\colon \ban{W} \to \banX$, $R\colon \banX \to \banY$, and $S\colon \banY \to \ban{Z}$ are bounded linear maps between Banach spaces with $R$ 1-summing, then $S \circ R \circ Q$ is 1-summing with $\pi_1(S \circ R \circ Q) \leq \|S\|\pi_1(R)\|Q\|$.
\end{enumerate}
\end{lemm}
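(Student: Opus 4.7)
The plan is to derive both parts directly from the definition~\eqref{eq:1-sum}.

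For part~(1), I would fix a finite family $\{f_i\}_{i=1}^N \subset \banX$ and use Tonelli to write
\[
\sum_{i=1}^N \norm{\iota_1(f_i)}_{L_1(\bP)} \;=\; \int \sum_{i=1}^N |f_i|\, d\bP.
\]
For almost every $\omega$, the pointwise sign-flipping identity $\sum_i |f_i(\omega)| = \max_{\epsilon \in \{-1,1\}^N} \sum_i \epsilon_i f_i(\omega)$ (obtained by taking $\epsilon_i = \operatorname{sgn} f_i(\omega)$), combined with the estimate $\sum_i \epsilon_i f_i(\omega) \leq \norm{\sum_i \epsilon_i f_i}_{L_\infty(\bP)}$, dominates the integrand by the constant $\max_\epsilon \norm{\sum_i \epsilon_i f_i}_{L_\infty(\bP)}$. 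Integrating against $\bP$ yields
\[
\sum_{i=1}^N \norm{\iota_1(f_i)}_{L_1(\bP)} \;\leq\; \max_{\epsilon \in \{-1,1\}^N}\bnorm{\sum_{i=1}^N \epsilon_i f_i}_{\banX}.
\]
A standard Hahn-Banach duality argument (together with the scalar identity $\max_\epsilon \sum_i \epsilon_i a_i = \sum_i |a_i|$) identifies the right-hand side with $\sup_{x^* \in B_{\banX^*}} \sum_i |\langle x^*, f_i\rangle|$, giving $\pi_1(\iota_{1\restriction\banX}) \leq 1$. The reverse inequality follows from the universal bound $\pi_1(R) \geq \|R\|$ (take $N = 1$) together with the observation that $\|\iota_{1\restriction\banX}\| = 1$ in the relevant applications.

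For part~(2), the ideal property is a routine chase through the definitions. Given $\{w_i\}_{i=1}^N \subset \ban{W}$, the operator-norm bound on $S$ gives $\sum_i \norm{SRQ(w_i)} \leq \norm{S}\sum_i \norm{RQ(w_i)}$, and the 1-summing inequality applied to the family $\{Q(w_i)\}_i \subset \banX$ then produces
\[
\sum_i \norm{RQ(w_i)} \;\leq\; \pi_1(R) \sup_{x^* \in B_{\banX^*}} \sum_{i=1}^N |\langle x^*, Q(w_i)\rangle|.
\]
The adjoint identity $\langle x^*, Q(w_i)\rangle = \langle Q^*x^*, w_i\rangle$ together with $\norm{Q^*x^*}_{\ban{W}^*} \leq \norm{Q}$ for every $x^* \in B_{\banX^*}$ permits me to replace the supremum over $B_{\banX^*}$ by $\norm{Q}$ times the supremum of $\sum_i |\langle w^*, w_i\rangle|$ over $w^* \in B_{\ban{W}^*}$, producing the desired bound $\pi_1(SRQ) \leq \norm{S}\pi_1(R)\norm{Q}$.

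Neither direction presents real difficulty. The only slightly subtle ingredient is the sign-flipping Hahn-Banach identification in part~(1); part~(2) is pure bookkeeping with adjoints.
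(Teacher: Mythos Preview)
Your argument is correct and follows the standard textbook route. The paper itself does not prove this lemma at all; it simply cites \cite[Chapter~3, Theorem~2.13]{DJT95} and moves on, so your self-contained derivation from the definition~\eqref{eq:1-sum} is strictly more than what the paper provides.

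One small remark on part~(1): you are right to hedge on the reverse inequality. The statement $\pi_1(\iota_{1\restriction\banX}) = 1$ with equality is not literally true for an arbitrary subspace $\banX \subset L_\infty(\bP)$ (take $\banX$ one-dimensional, spanned by a function whose $L_1$-norm is strictly smaller than its $L_\infty$-norm). Only the inequality $\pi_1(\iota_{1\restriction\banX}) \leq 1$ holds in general, and that is all the paper actually uses in the proof of Theorem~\ref{thm:Kislyakov}. Your phrase ``in the relevant applications'' correctly signals awareness of this; it would be cleaner to state and prove only the $\leq$ direction, which is what matters.
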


The next lemma already follows from \cite[Proof of Theorem 3]{Kislyakov75} (see also \cite[Lemmas 3.4, 3.5]{NS07}). We provide a shorter proof for the sake of completeness.

Recall that a Banach lattice is a Banach space $\banXn$ equipped with a partial order $\preceq$ satisfying, for all $\alpha \in [0,\infty)$ and $x,y,z \in \banX$,
\begin{itemize}
    \item $x \preceq y \implies x + z \preceq y + z$,
    \item $x \preceq y \implies \alpha x \preceq \alpha y$,
    \item there exists a supremum $x \lor y$ of $x,y$, and
    \item $\abs{x} \le \abs{y} \implies \norm{ x} \le \norm{y}$, where $\abs{x} = x \lor (-x)$.
\end{itemize}
The main examples of Banach lattices concerning us are the spaces $\ell_p(J)$, where $p \in [1,\infty]$, $J$ is some indexing set, and $a \preceq b$ if and only if $a_j \leq b_j$ for all $j \in J$.

\begin{lemm} \label{lem:orderbndd}
Let $N \in \bN$. For any Banach lattice $\banX$ and 1-summing linear map $R\colon \ell^N_\infty \to \banX$, there exists $x \in \banX$ with $\norm{x} \leq \pi_1(R)$ and $\abs{R(v)} \preceq x$ for every $v \in B_{\ell^N_\infty}$.
\end{lemm}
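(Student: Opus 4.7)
The plan is to construct $x$ explicitly as $x := \sum_{i=1}^N |R(e_i)|$, where $\{e_i\}_{i=1}^N$ is the standard unit basis of $\ell_\infty^N$, and then to verify both required properties directly from the definition of 1-summing and the elementary axioms of a Banach lattice.

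First I would bound $\sum_{i=1}^N \|R(e_i)\|$. Apply the 1-summing inequality \eqref{eq:1-sum} to the collection $\{e_i\}_{i=1}^N$: since the dual of $\ell_\infty^N$ is $\ell_1^N$, for every $a \in B_{\ell_1^N}$ one has $\sum_{i=1}^N |\langle a, e_i\rangle| = \|a\|_1 \leq 1$, so the supremum on the right-hand side of \eqref{eq:1-sum} equals $1$ and hence
\[
\sum_{i=1}^N \|R(e_i)\| \;\le\; \pi_1(R).
\]
Combining this with the triangle inequality in $\mathcal{X}$ and the identity $\||y|\| = \|y\|$ (which follows from the lattice axiom $|x|\preceq|y| \Rightarrow \|x\|\leq\|y\|$ applied to $x = |y|$), we obtain $\|x\| \leq \sum_{i=1}^N \||R(e_i)|\| = \sum_{i=1}^N \|R(e_i)\| \leq \pi_1(R)$.

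For the domination property, fix $v = (v_1,\dots,v_N) \in B_{\ell_\infty^N}$ and write $v = \sum_{i=1}^N v_i e_i$. The standard Banach lattice inequalities $|y + z| \preceq |y| + |z|$ and $|\alpha y| = |\alpha||y|$ yield
\[
|R(v)| \;=\; \Bigl|\sum_{i=1}^N v_i R(e_i)\Bigr| \;\preceq\; \sum_{i=1}^N |v_i|\,|R(e_i)| \;\preceq\; \sum_{i=1}^N |R(e_i)| \;=\; x,
\]
where the last inequality uses $|v_i|\le 1$ together with positivity of each $|R(e_i)|$ and monotonicity of addition in the lattice order.

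I do not anticipate any substantive obstacle; the argument is essentially a direct computation, and its brevity (as opposed to the Pietsch-factorization route) comes from the fact that $\ell_\infty^N$ has an unconditional basis, so a single majorant $x$ can be read off immediately. The only subtlety to watch is the correct invocation of the Banach lattice axioms for the triangle inequality on $|\cdot|$ and the identity $\||y|\|=\|y\|$.
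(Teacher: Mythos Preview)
Your proposal is correct and follows essentially the same approach as the paper: define $x := \sum_{i=1}^N |R(e_i)|$, bound $\|x\|$ via the 1-summing inequality applied to the standard basis (using $(\ell_\infty^N)^* = \ell_1^N$), and obtain the domination $|R(v)| \preceq x$ from the lattice triangle inequality and $|v_i|\leq 1$. The paper's proof is line-for-line the same computation, just with slightly less commentary on the lattice axioms.
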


\begin{proof}
Let $\banX$ be a Banach lattice and $R\colon \ell^N_\infty \to \banX$ a 1-summing linear map. Define $x \in \banX$ by $x := \sum_{i=1}^N \abs{R(e_i)}$, where $\{e_i\}_{i=1}^N$ is the standard basis of $\ell^N_\infty$. Then we have
\begin{align*}
    \|x\| = \left\| \sum_{i=1}^N \abs{R(e_i)} \right\|
    \leq \sum_{i=1}^N \norm{R(e_i)}
    \leq \pi_1(R) \sup_{b \in B_{\ell_1^N}} \sum_{i=1}^N |\langle b,e_i \rangle|
    = \pi_1(R) \sup_{b \in B_{\ell_1^N}} \sum_{i=1}^N |b_i|
    = \pi_1(R),
\end{align*}
and for every $v \in B_{\ell^N_\infty}$,
\begin{align*}
    \abs{R(v)} = \left|R\left(\sum_{i=1}^N v_ie_i\right)\right|
    = \left|\sum_{i=1}^N v_i R\left(e_i\right)\right| 
    \preceq \sum_{i=1}^N|v_i|\left|R\left(e_i\right)\right| 
    \preceq \sum_{i=1}^N\left|R\left(e_i\right)\right|
    = x.
\end{align*}
\end{proof}

\begin{theo}\label{thm:Kislyakov}
Let $G$ be a graph, $C_{iso},C_{1},C_{\infty},C_{\gamma}$ constants in $(0, \infty)$, $\mu$ a probability measure on $V(G)$, $\nu$ a probability measure on $E(G)$, and $\sd$ a geodesic metric on $V(G)$. Let $\delta_{iso}\in [2, \infty)$ and $\delta_{spec}\in [1, \infty)$. If $G$ has $(\mu, \nu, \sd)$-isoperimetric dimension $\delta_{iso}$ with constant $C_{iso}$, and Lipschitz-spectral profile of dimension $\delta_{spec}$, bandwidth $\beta$, and constants $C_{1},C_{\infty},C_{\gamma}$, then any $D$-isomorphic embedding from the Lipschitz-free space $\mathsf{LF}(V(G),\sd)$ into a finite-dimensional $L_1$-space $\ell_1^N$ satisfies 
	\begin{equation*}
		D \ge \frac{ 1 }{2C_{iso}C_{1}^2C_{\infty}} \left( \frac{\delta_{iso}}{C_{\gamma}} \right)^{\frac{1}{\delta_{iso}}} \left( \int_{1}^\beta s^{ \delta_{spec} - \delta_{iso} - 1}  ds \right)^{\frac{1}{\delta_{iso}}}.
	\end{equation*}
\end{theo}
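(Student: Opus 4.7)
The plan is to implement Kislyakov's duality/1-summing strategy in the discretized form used by Naor--Schechtman, with the Lipschitz-spectral profile playing the role of the Fourier basis of $\mathbb{Z}_n^2$ and the Sobolev inequality of Section \ref{sec:iso->Sob} standing in for the classical Poincar\'e-type inequality on the torus.

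First I would set up the duality. Let $T \colon \mathsf{LF}(V(G),\sd) \to \ell_1^N$ be a $D$-isomorphic embedding, normalized so that $\|T\| \leq 1$ and $\|T^{-1}\| \leq D$ on its image. Dualizing yields $T^* \colon \ell_\infty^N \to \Lip_0(V(G),\sd)$ of norm at most $1$, and an extension of $T^{-1}$ via Hahn--Banach (applied to the finite-dimensional range of $T$) produces a right inverse $S^*$ of $T^*$ with $\|S^*\| \leq D$. Using the isometric isomorphism $\Lip_0 \cong \Lip_{0,\mu}$ recalled before the statement of Theorem~\ref{thm:Kislyakov}, replace $T^*$ by $\tilde T^* \colon \ell_\infty^N \to \Lip_{0,\mu}$ with the same operator bound. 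Composing with the inclusion $\Lip_{0,\mu} \hookrightarrow L_\infty(\mu)$ and then with the formal identity $\iota_1 \colon L_\infty(\mu) \to L_1(\mu)$, which is $1$-summing with constant $1$ by Lemma~\ref{lem:1summingfacts}(1), gives by Lemma~\ref{lem:1summingfacts}(2) a $1$-summing operator $R \colon \ell_\infty^N \to L_1(\mu)$. Lemma~\ref{lem:orderbndd} then produces a dominating function $g \in L_1(\mu)$ with $\|g\|_{L_1(\mu)} \leq \pi_1(R)$ and $|R(v)| \leq g$ pointwise for every $v \in B_{\ell_\infty^N}$.

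Next, for each $f_i$ in the Lipschitz-spectral family with $\Lip(f_i) \leq s \leq \beta$, feed the mean-zero function $f_i - \Ex_\mu f_i \in \Lip_{0,\mu}$ through the right inverse $\tilde S^*$ to obtain $v_i \in \ell_\infty^N$ with $\|v_i\|_\infty \leq D s$ and $\tilde T^* v_i = f_i - \Ex_\mu f_i$. Rescaling by $Ds$ and applying the domination from Lemma~\ref{lem:orderbndd} yields the pointwise estimate $|f_i - \Ex_\mu f_i| \leq D s \cdot g$ almost everywhere in $\mu$. At this point three ingredients converge: (i) this pointwise bound; (ii) the Sobolev bound $\|f_i - \Ex_\mu f_i\|_{L_{\delta_{iso}'}(\mu)} \leq 2 C_{iso} s$ from Theorem~\ref{thm:iso->Sob}; and (iii) the lower bound $\|f_i\|_{L_2(\mu)}^2 \geq \|f_i\|_{L_1(\mu)}^2 \geq C_1^{-2}$, which follows from the probability-measure estimate $\|\cdot\|_{L_1} \leq \|\cdot\|_{L_2}$ together with Item~\ref{item:1} of Definition~\ref{def:Lip-spec-profile}. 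Combining these with the count $\gamma_F(s) \geq C_\gamma^{-1} s^{\delta_{spec}}$ and integrating scale-by-scale over $s \in [1,\beta]$ should produce a lower bound on an appropriate moment of $g$ of the form $\int_1^\beta s^{\delta_{spec}-\delta_{iso}-1}\, ds$; pairing this with the \emph{a priori} control on $\|g\|_{L_1(\mu)}$ coming from $\pi_1(R)$ and extracting a $\delta_{iso}$-th root then yields the stated lower bound on $D$.

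The main technical obstacle is the last step. The family $\{f_i\}$ is orthogonal in $L_2(\mu)$, but after mean-subtraction $\{f_i - \Ex_\mu f_i\}$ typically loses this orthogonality, so some care is required when using orthogonality to convert pointwise dominance into a lower bound on a moment of $g$. Moreover, the natural factorization through $L_\infty(\mu)$ forces a diameter-like quantity into $\pi_1(R)$, so the scale-by-scale argument must show this is effectively absorbed into the bandwidth $\beta$ through careful interpolation between the $L_1$, $L_2$, and $L_{\delta_{iso}'}$ norms. Producing the precise exponents on $C_1$ and $C_\infty$ demanded by the statement (namely $C_1^{-2}$ and $C_\infty^{-1}$) is where the bookkeeping is most delicate.
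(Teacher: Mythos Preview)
Your overall strategy --- dualize, factor through a $1$-summing identity, apply the order-bounded lemma --- is the paper's, but two specific choices in your factorization are the wrong ones, and they are exactly the source of the two ``obstacles'' you name at the end.

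First, you factor $\Lip_{0,\mu} \hookrightarrow L_\infty(\mu) \to L_1(\mu)$. The first arrow has norm equal to the $\sd$-diameter of $V(G)$, and there is no way to absorb this into the bandwidth afterwards; the argument simply does not close at this scale. The paper instead passes to the \emph{edge} side via the gradient: $\nabla_\sd \colon \Lip_{0,\mu} \to L_\infty(\nu)$ is an isometric embedding (because $\|\nabla_\sd f\|_{L_\infty(\nu)} = \Lip(f)$), as is $\nabla_\sd \colon W^{1,1}_{0,\mu} \to L_1(\nu)$. The formal identity $L_\infty(\nu) \to L_1(\nu)$ is $1$-summing with constant $1$, so the formal inclusion $\iota \colon \Lip_{0,\mu} \to W^{1,1}_{0,\mu}$ satisfies $\pi_1(\iota) \leq 1$ with no diameter loss whatsoever.

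Second, you apply Lemma~\ref{lem:orderbndd} with target lattice $L_1(\mu)$ and obtain a pointwise dominator $g$; you then have no mechanism to convert pointwise domination plus $L_2$-orthogonality plus an $L_{\delta_{iso}'}$ Sobolev bound into the required $\ell_{\delta_{iso}}$-type estimate. The paper resolves this by appending two further maps \emph{before} invoking Lemma~\ref{lem:orderbndd}: the Sobolev embedding $\iota_{sob} \colon W^{1,1}_{0,\mu} \to L_{\delta_{iso}'}(\mu)$ of norm $\leq 2C_{iso}$ (Theorem~\ref{thm:iso->Sob}), and a coefficient map $\cF \colon L_{\delta_{iso}'}(\mu) \to \ell_{\delta_{iso}}(J)$ defined by $\cF(g) = (\bE_\mu[g f_j])_{j\in J}$. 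The bound $\|\cF\|_{L_{\delta_{iso}'}\to\ell_{\delta_{iso}}} \leq C_\infty$ comes from Riesz--Thorin interpolation between the obvious $L_1 \to \ell_\infty$ and $L_2 \to \ell_2$ bounds (this is precisely where $\delta_{iso} \geq 2$ is used). Lemma~\ref{lem:orderbndd} is then applied to the full composite $\cF \circ \iota_{sob} \circ \iota \circ T^* \colon \ell_\infty^N \to \ell_{\delta_{iso}}(J)$, producing a dominator $b \in \ell_{\delta_{iso}}(J)$ with $\|b\|_{\delta_{iso}} \leq 2C_{iso}C_\infty D$. Since $f_j/\Lip(f_j) \in B_{\Lip_{0,\mu}} \subset T^*(B_{\ell_\infty^N})$ and $|\cF(f_j)| \succeq C_1^{-2} e_j$ by orthogonality, one reads off $b \succeq C_1^{-2} \sum_j e_j/\Lip(f_j)$, hence $\bigl(\sum_j \Lip(f_j)^{-\delta_{iso}}\bigr)^{1/\delta_{iso}} \leq 2C_{iso}C_1^2 C_\infty D$; the layer-cake formula for this sum in terms of the Lipschitz growth function $\gamma_F$ then yields the integral in the statement. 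Both of your obstacles vanish: no diameter enters, and orthogonality is used exactly once, inside the definition of $\cF$, so no mean-subtraction issue arises.
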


\begin{proof}
Assume that there exist $N \in \bN$ and a $D$-isomorphic embedding $T \colon \mathsf{LF}(V(G),\sd) \to \ell_1^N$. By scaling, we may assume that for all $x \in \mathsf{LF}(V(G),\sd)$, $\norm{x}_{\LF} \le \norm{Tx}_1 \le D \norm{x}_{\LF}$. The dual map $T^* \colon \ell_\infty^N \to \Lip_{0}(V(G),\sd)\equiv \Lip_{0,\mu}(V(G),\sd)$ is an onto linear map satisfying $\norm{ T^*}\le D$ and, importantly, 
	\begin{equation} \label{eq:T*quotient}
		T^*(B_{\ell^N_\infty}) \supset B_{\Lip_{0,\mu}(V(G),\sd)},
	\end{equation}
which follows from $\|x\|_{\LF} \le \|Tx\|_1$ and the Hahn-Banach theorem. Denote by $\iota_{sob} \colon W_{0,\, \mu}^{1,1}(\nu, \sd) \to L_{\delta_{iso}'}(\mu)$ the formal identity. It follows from Theorem \ref{thm:iso->Sob} and the condition $W^{1,1}_{0,\, \mu}(\nu,\sd) \subset \ker(\bE_\mu)$ that $\norm{\iota_{sob} }\le 2C_{iso}$. Let $\{f_j\}_{j \in J}$ be a collection of pairwise orthogonal functions realizing the $(\mu,\sd)$-Lipschitz-spectral profile of dimension $\delta$ and bandwidth $\beta$, with constants $C_{1},C_{\infty},C_{\gamma}$. We define a linear map $\cF \colon L_1(\mu) \to \bR^J$ by 
	\begin{equation*}
		\cF(g) \eqd (\bE_{\mu}[g f_j])_{j \in J}.
	\end{equation*}
Since $f_j \in L_\infty(\mu)$, for all $j\in J$, $\cF(g)$ is well-defined for all $g\in L_p(\mu)$ and $p\in [1,\infty]$. Moreover, since $\sup_{j \in J}\norm{ f_j }_{L_\infty(\mu)} \le C_{\infty}$, it follows that $\norm{ \cF}_{L_{1}(\mu)\to \ell_\infty(J)}\le C_{\infty}$.
By orthogonality of the collection $\{f_j\}_{j \in J}$ and because $\sup_{j \in J} \norm{ f_j }_{L_2(\mu)} \le \sup_{j \in J}\norm{ f_j }_{L_\infty(\mu)} \le C_{\infty}$, we have that $\norm{ \cF}_{L_{2}(\mu)\to \ell_2(J)}\le C_{\infty}$. Since $\delta_{iso}'\le 2$, the Riesz-Thorin interpolation\footnote{Riesz-Thorin interpolation theorem is valid for $\sigma$-finite measures and can be applied in our situation since $J$ is countable.} theorem tells us that $\cF \colon L_{\delta'_{iso}}(\mu) \to \ell_{\delta_{iso}}(J)$ is well-defined and $\norm{ \cF}_{L_{\delta_{iso}'}(\mu)\to \ell_{\delta_{iso}}(J)}\le C_{\infty}$. We thus have a chain of linear maps 

$$ \ell^N_\infty \stackrel{T^*}{\to} \Lip_{0,\mu}(V(G),\sd) \stackrel{\iota}{\to} W_{0,\, \mu}^{1,1}(\nu, \sd) \stackrel{\iota_{sob}}{\to} L_{\delta_{iso}'}(\mu) \stackrel{\cF}{\to}  \ell_{\delta_{iso}}(J),$$
where $\iota$ is the formal identity from $\Lip_{0,\mu}(V(G),\sd)$ into $W_{0,\, \mu}^{1,1}(\nu, \sd)$. Note that the gradient operator $\nabla_\sd$ defines a contractive linear map $\Lip_{0, \mu}(V(G),\sd) \to L_\infty(\nu)$ and a linear isometric embedding $W_{0,\, \mu}^{1,1}(\nu, \sd) \to L_1( \nu )$, and that we have the following commutative diagram:
\[
\begin{tikzcd}
 \Lip_{0, \mu}(V(G),\sd) \arrow{ rr }{ \iota } \arrow[swap]{d}{\nabla_\sd} & &  W_{0,\mu}^{1,1}(\nu, \sd)  \\
\hskip 1cm X \subset L_\infty(\nu) \arrow{rr}{\iota_{1} }  &  &  L_1(\nu) \supset Y \arrow[swap]{u}{\nabla_\sd^{-1}} \\
\end{tikzcd}
\]
Here, $X = \nabla_\sd( \Lip_{0,\mu}(V(G),\sd))$, $\iota_{1}$ is the formal identity, $Y = \iota_{1}(X)$, and $\iota = \nabla_\sd ^{-1}\circ \iota_{1\restriction_X}\circ \nabla_\sd$. Since $\nu$ is a probability measure, the above factorization and Lemma \ref{lem:1summingfacts} implies $\iota$ is 1-summing with $\pi_1(\iota) \leq 1$.	Similarly, by Lemma \ref{lem:1summingfacts} again,
\begin{equation*}
    \pi_1(\cF \circ \iota_{sob} \circ \iota \circ T^*) \leq \|\cF\| \cdot \|\iota_{sob}\| \cdot \|T^*\| \leq 2C_{iso}C_{\infty}D.
\end{equation*}
The above inequality together with Lemma \ref{lem:orderbndd} implies that there exists $b \in \ell_{\delta_{iso}}(J)$ with
	\begin{align}
		\|b\|_{\delta_{iso}} &\leq 2C_{iso}C_{\infty}D \label{eq:b1} \\
		\bigvee_{a \in B_{\ell_\infty^N}}|\cF \circ \iota_{sob} \circ \iota \circ T^*(a)| &\preceq b. \label{eq:b2}
	\end{align}
% By \eqref{eq:T*quotient}, we have
% 	\begin{equation} \label{eq:latticequotient}
% 		\bigvee_{j \in J } \frac{ \abs{ F( f_j )} }{ \Lip(f_j) } \preceq \bigvee_{a \in B_{\ell_\infty^N} } \abs{ F \circ \iota_{sob} \circ \iota \circ T^* ( a )}.
% 	\end{equation}

It follows from the definition of $\cF$ and from Definition \ref{def:Lip-spec-profile}-\eqref{item:1} that, for all $j\in J$,
\begin{equation} \label{eq:ej's}
    |\cF( f_j )| \succeq C_{1}^{-2} e_j,
\end{equation}
where $\{e_j\}_{j\in J}$ is the canonical basis of $\ell_{\delta_{iso}}(J)$. Therefore,
	\begin{equation*}
		\abs{ b } \overset{\eqref{eq:b2}}{\succeq} \bigvee_{a \in B_{\ell_\infty^N}}|\cF \circ \iota_{sob} \circ \iota \circ T^*(a)| \overset{\eqref{eq:T*quotient}}{\succeq} \bigvee_{j \in J } \frac{ \abs{ \cF( f_j )} }{ \Lip(f_j) } \overset{\eqref{eq:ej's}}{\succeq} \frac{1}{C_{1}^2} \bigvee_{j \in J } \frac{e_j }{ \Lip(f_j) } = \frac{1}{C_{1}^2} \sum_{j \in J } \frac{ e_j }{ \Lip(f_j) }.
	\end{equation*}
By taking the norm on both sides we get 
	\begin{equation*}
		\frac{1}{C_{1}^2} \left( \sum_{j \in J } \frac{ 1 }{ \Lip(f_j)^{\delta_{iso}} } \right)^{1/\delta_{iso}} \le \|b\|_{\delta_{iso}} \overset{\eqref{eq:b1}}{\leq} 2C_{iso}C_{\infty}D,
	\end{equation*}
and hence
	\begin{equation}\label{eq:aux1}
		D \geq \frac{1}{2C_{iso}C_{1}^2C_{\infty}} \left( \sum_{j \in J } \frac{ 1 }{ \Lip(f_j)^{\delta_{iso}} } \right)^{1/\delta_{iso}}. 
	\end{equation}
From here, we calculate the sum applying the classical formula
$$\int_{\Omega} \abs{ h}^p d\sigma = p \int_{0}^\infty t^{p-1} \sigma( \{ h >t\} ) dt$$
with $\Omega = J$ and $\sigma$ the counting measure:
	\begin{align}
	\nonumber	\sum_{j \in J} \frac{ 1 }{ \Lip(f_j)^{ \delta_{iso}  } } & = \delta_{iso} \int_{0}^\infty t^{ \delta_{iso} - 1}\bigabs{ \Big\{ j \in J \colon \frac{ 1 }{ \Lip(f_j) } > t \Big \} } dt \\ 
	\nonumber	& = \delta_{iso} \int_{0}^\infty  \frac{1}{ s^{ \delta_{iso} - 1}}\bigabs{ \Big\{ j \in J \colon \frac{ 1 }{ \Lip(f_j) } >  \frac{1}{s} \Big \} }  \frac{1}{s^2} ds \\
	\nonumber	& = \delta_{iso} \int_{0}^\infty  \frac{1}{ s^{ \delta_{iso} + 1}}\bigabs{ \Big\{ j \in J \colon  \Lip(f_j) < s \Big \} } ds \\
		\label{eq:aux2} & \stackrel{\eqref{item:3}}{\geq}  \delta_{iso} \int_{1}^\beta  \frac{1}{ s^{ \delta_{iso} + 1} } \frac{ s^{\delta_{spec}} }{ C_{\gamma} } ds.
	\end{align}
Combining \eqref{eq:aux1} and \eqref{eq:aux2} gives us
	\begin{equation*}
		D \geq \frac{ 1 }{2C_{iso}C_{1}^2C_{\infty}} \Big( \frac{\delta_{iso}}{C_{\gamma}} \Big)^{\frac{1}{\delta_{iso}}} \Big( \int_{1}^\beta s^{ \delta_{spec} - \delta_{iso} - 1}  ds \Big)^{\frac{1}{\delta_{iso}}}.
	\end{equation*}
\end{proof}

\section{Brief review of graph measures and $\os$-products}
\label{sec:slash-review}
The graphs we are interested in are graphs built by taking $\os$-product of various $s$-$t$ graphs. In the first subsection we define measures on the vertex set of general graphs induced by measures on their edge set, and in the following  subsection we recall basic properties of the $\os$-product operation relevant to the ensuing arguments.

\subsection{Edge-induced vertex measures}
Let $G$ be a graph and $\alpha=(\alpha(e))_{e\in E(G)} \subset (0,1)$. When $\nu$ is a measure on $E(G)$, we get an \emph{induced measure} $\mu_\alpha(\nu)$ on $V(G)$ defined for $x \in V(G)$ by
    \begin{equation} \label{eq:induced-measure}
		\mu_{\alpha}(\nu)(x) \eqd \sum_{e\in E(G) \atop e^+=x} \nu(e)\alpha(e) + \sum_{e\in E(G) \atop e^-=x} \nu(e) (1 - \alpha(e)).
	\end{equation}
It can be easily checked that $\mu_\alpha(\nu)$ is the unique measure on $V(G)$ satisfying
\begin{equation} \label{eq:induced-integral}
    \int_{V(G)} f d\mu_\alpha(\nu) = \int_{E(G)} \alpha(e)f(e^+) + (1-\alpha(e))f(e^-) d\nu(e)
\end{equation}
for all $f\colon V(G) \to \bR$.

\begin{rema}
Whenever $\nu$ is a probability measure, so is $\mu_\alpha(\nu)$. If $\alpha \equiv \tfrac{1}{2}$, we will often suppress notation and write $\mu(\nu)$ for $\mu_{\frac{1}{2}}(\nu)$. If $\nu$ is the uniform probability measure on $E(G)$, we call $\mu(\nu)$ the \emph{degree-probability measure} on $V(G)$ because, for all $x\in V(G)$, we have
\begin{equation*}
    \mu(\nu)(x) = \frac{ \deg(x) }{ 2\abs{E(G)} } = \frac{ \deg(x) }{ \sum_{y \in V(G)} \deg(y)}.
\end{equation*}
\end{rema}

\subsection{$\os$-products} 
\label{sec:slash-products}
In the sequel, an \emph{$s$-$t$ graph} will be a graph $G$ equipped with two distinguished and distinct vertices: a \emph{source} vertex $s(G)$ and a \emph{sink} or \emph{target} vertex $t(G)$, and an orientation of the edges such that every vertex in $V(G)$ belongs to a directed path from $s(G)$ to $t(G)$.

\begin{exam} \label{ex:Pk}
Let $k \ge 2$ be an integer. Let $\Pk$ denote the \emph{path graph} of length $k$ with the following concrete labelling: $V(\Pk) := \{ \frac{i}{k} \colon 0 \leq i \leq k\}$ and $E(\Pk) := \{(\tfrac{i-1}{k},\tfrac{i}{k}) \colon 1 \leq i \leq k\}$. The graph $\Pk$ has $k+1$ vertices and $k$ edges directed from the source $s(\Pk) := 0$ to the sink $t(\Pk) := 1$, thus turning $\Pk$ into an $s$-$t$ graph. The graph $\Pk$ is typically equipped with the normalized geodesic metric induced by the weights $\sd_{\Pk}(e) := \frac{1}{k}$ for every $e \in E(\Pk)$.
\end{exam}

The next example supplies the class of graphs to which our main theorems on dimensions of $\os$-powers apply.

\begin{exam}[Generalized diamond graphs] \label{ex:diamonds}
Let $k,m \geq 2$ be integers. The $m$-branching diamond graph of depth $k$, denoted $\dia_{k,m}$, is the $s$-$t$ graph with vertex set:
	$$V(\dia_{k,m}) := V(\Pk) \times \{1,\dots m\} / \sim,$$
where $(u,i) \sim (v,j)$ if and only if $(u,i) = (v,j)$, or $u = v = 0$, or $u = v = 1$,\\
and directed edge set:
	$$E(\dia_{k,m}) := \{([(e^-,i)],[(e^+,i)]): e \in E(\Pk), i \in \{1, \dots m\} \},$$
with source $s(\dia_{k,m}) := [(0,i)]$ and sink $t(\dia_{k,m}) := [(1,i)]$. 
\begin{figure}[H]
    \centering
   \includegraphics[trim=250 300 200 100,clip,scale=.425]{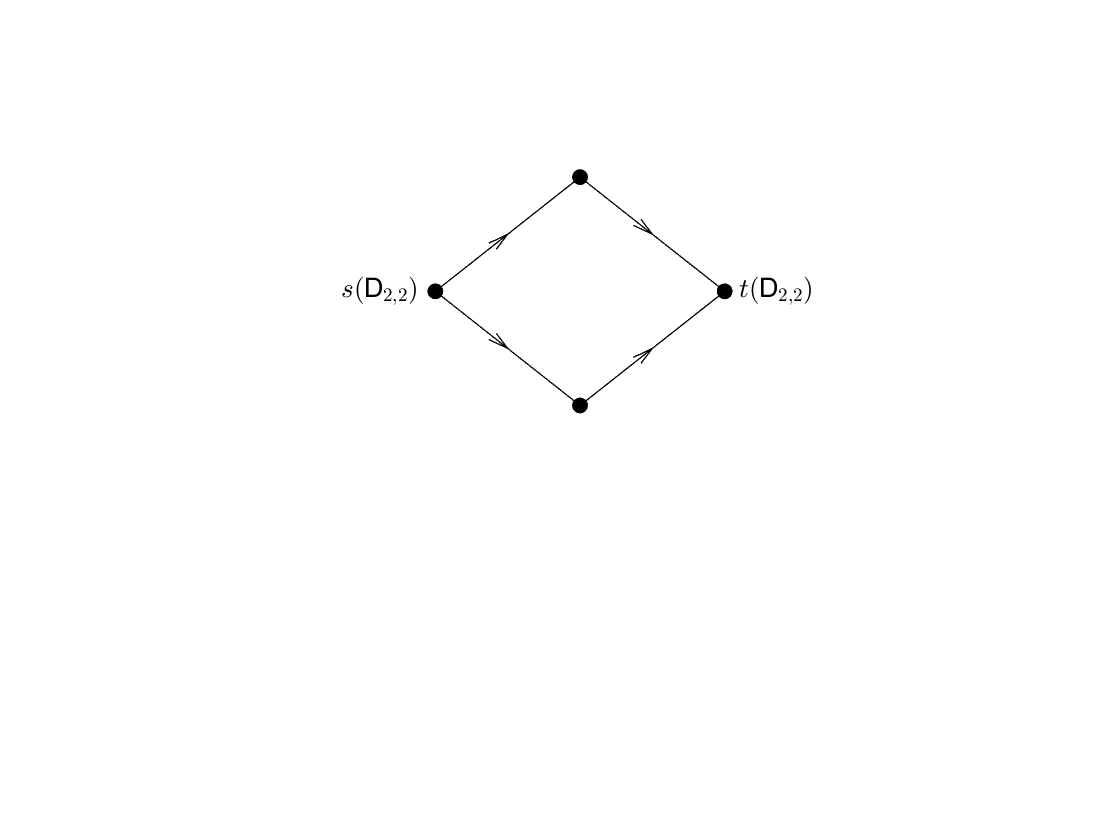}
   \includegraphics[trim=200 300 200 100,clip,scale=.425]{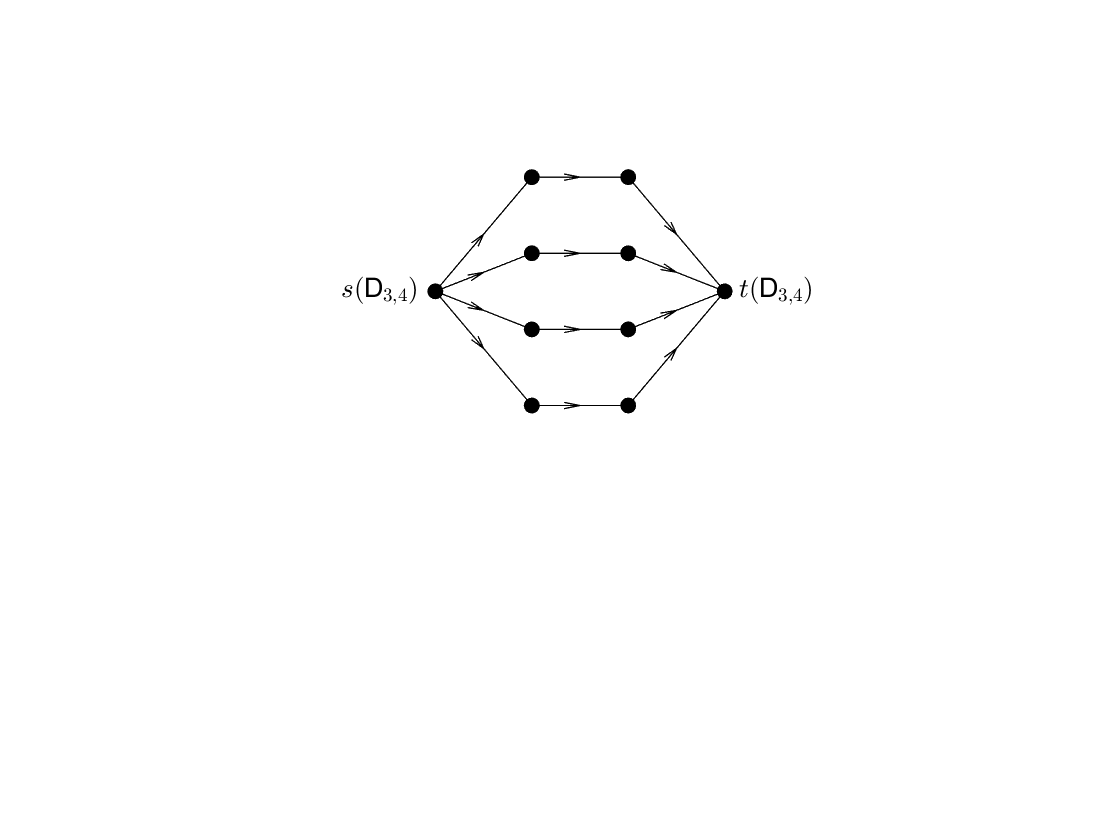}
       \caption{The diamond graphs $\sD_{2,2}$ and $\sD_{3,4}$.}
    \label{fig:diamonds}
\end{figure}
We typically equip $V(\sD_{k,m})$ with the normalized geodesic metric induced by the weights $\sd_{\sD_{k,m}}(e) := \frac{1}{k}$ and $E(\sD_{k,m})$ with the uniform probability measure $\nu_{\sD_{k,m}}(e) := \frac{1}{km}$ for every $e \in E(\sD_{k,m})$.
\end{exam}

It seems that the first formal definition of $\os$-product appeared in \cite{LeeRag10}.
\begin{defi}[$\os$-product]\label{def:slash-product}
Let $H$ be a graph and $G$ an $s$-$t$ graph. We define the graph \emph{$\os$-product} of $H$ by $G$, denoted $H \os G$, as follows.
\begin{itemize}
    \item The vertex set $V(H\os G)$ is defined to be $E(H) \times V(G)/\sim$, where $(e_1,u_1) \sim (e_2,u_2)$ if and only if
    \begin{itemize}
        \item $(e_1,u_1) = (e_2,u_2)$, or
        \item $e_1^+ = e_2^-$, $u_1 = t(G)$, and $u_2 = s(G)$, or
        \item $e_1^+ = e_2^+$, $u_1 = t(G)$, and $u_2 = t(G)$, or
        \item $e_1^- = e_2^-$, $u_1 = s(G)$, and $u_2 = s(G)$.        
    \end{itemize}
    For $(e,u) \in E(H) \times V(G)$, its equivalence class in $V(H \os G)$ is denoted by $e \os u$.
    \item The directed edge set $E(H \os G)$ is defined to be $\{(e\os f^-,e\os f^+): (e,f) \in E(H) \times E(G)\}$. We denote the edge $(e\os f^-,e\os f^+)$ by $e\os f$.
\end{itemize}
\end{defi}

\begin{rema}
The assignment $(e, f) \mapsto e\os f$ defines a bijection $E(H) \times E(G) \to E(H\os G)$. With our choice of notation, it obviously holds that $(e\os f)^\pm = e\os f^\pm$.
\end{rema}

It is routine to check that $H\os G$ satisfies our standing assumptions on graphs (finite, connected, directed, with at least one edge, and without self-loops or multiple edges between the same pair of vertices) since $H$ and $G$ do.

There is a canonical injection $V(H) \hookrightarrow V(H\os G)$ given by $e^+ \mapsto e\os t(G)$ and $e^- \mapsto e\os s(G)$ for every $e \in E(H)$. The domain of this map is all of $V(H)$ since every vertex is an endpoint of at least one edge, and it is well-defined by the definition of the equivalence relation $\sim$ defining $V(H\os G)$. We treat $V(H)$ as a subset of $V(H\os G)$ under this identification. If $H$ is an $s$-$t$ graph, then $H\os G$ inherits an $s$-$t$ structure under the choice $s(H\os G) := s(H)$, $t(H\os G) := t(H)$. 
%See Figure~\ref{fig:D22-slash-D22} for a picture of $\sD_{2,2} \os \sD_{2,2}$.

\begin{figure}[H]
    \centering
   \includegraphics[trim=200 175 150 150,clip,scale=.425]{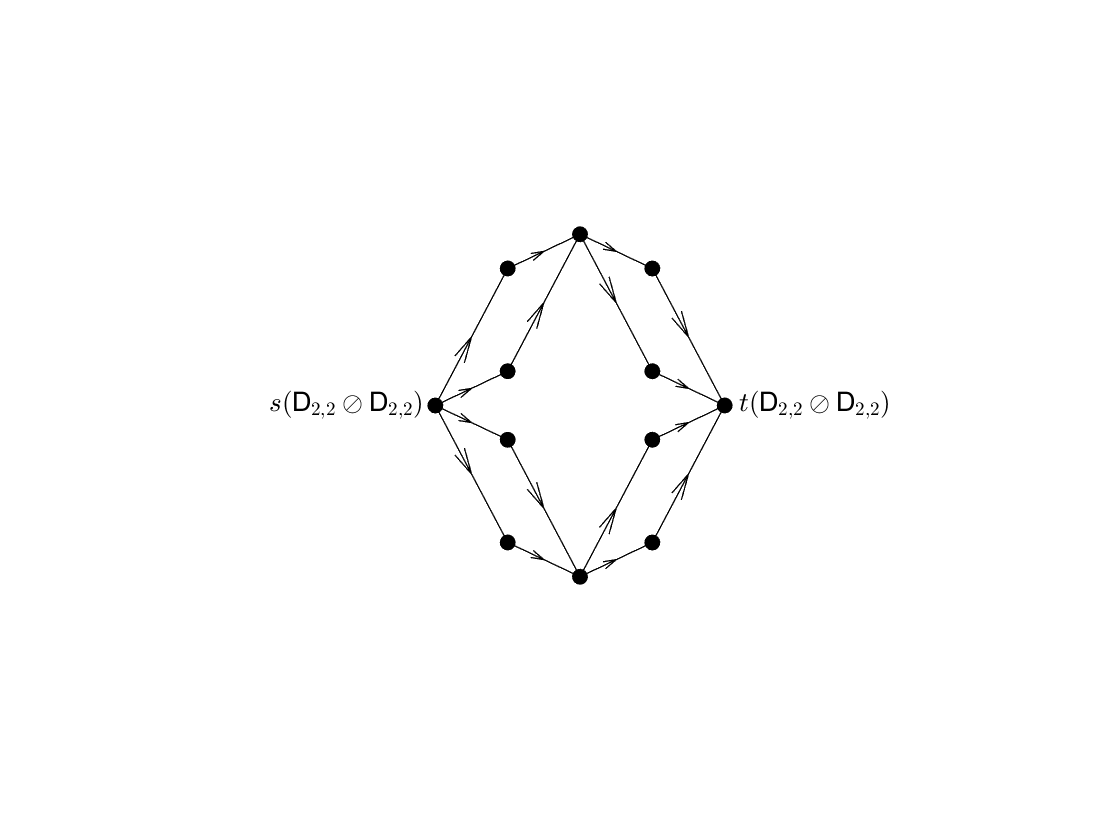}
       \caption{The $\os$-product $\sD_{2,2} \os \sD_{2,2} = \sD_{2,2}^{\os 2}$.}
    \label{fig:D22-slash-D22}
\end{figure}

Let $H$ and $H'$ be graphs. Recall that a \emph{graph morphism} is a map $\theta\colon V(H) \to V(H')$ that preserves directed edges, i.e. $(\theta(e^-),\theta(e^+)) \in E(H')$ for every $e \in E(H)$ (we adopt the convention that all graph morphisms are directed). In this case $\theta$ induces a well-defined map (still denoted $\theta$) from $E(H)$ to $E(H')$ satisfying $\theta(e)^\pm = \theta(e^\pm)$. Let $G$ and $G'$ be $s$-$t$ graphs and $\theta\colon V(G) \to V(G')$ a graph morphism. If $\theta(s(G)) = s(G')$ and $\theta(t(G)) = t(G')$, then $\theta$ is an \emph{$s$-$t$ graph morphism}. Let $\theta_H\colon V(H) \to V(H')$ be a graph morphism and $\theta_G\colon V(G) \to V(G')$ an $s$-$t$ graph morphism. We define the \emph{$\os$-morphism} $\theta_H\os\theta_G\colon V(H\os G) \to V(H'\os G')$ by
	\begin{equation*}
		(\theta_H\os\theta_G)(e\os u) := \theta_H(e)\os\theta_G(u).
	\end{equation*}
It can be easily verified that $\theta_H\os\theta_G$ is a well-defined graph morphism.

\subsection{$\os$-measures on $\os$-products}
\label{sec:os-meas}
Let $H$ be a graph and $G$ an $s$-$t$ graph. When $\nu_H$ and $\nu_G$ are measures on $E(H)$ and $E(G)$, respectively, we define the \emph{$\os$-measure} $\nu_H\os\nu_G$ on $E(H\os G)$ by
\begin{equation}\label{eq:prod-measure}
	(\nu_H\os \nu_G)(e\os f) \eqd \nu_H(e) \cdot \nu_G(f).
\end{equation}

\begin{rema}
Obviously, under the identification $E(H\os G) = E(H) \times E(G)$, the $\os$-measure is simply the product measure.
\end{rema}

Combining \eqref{eq:induced-measure} and\eqref{eq:prod-measure}, we obtain a simple identity below that will be used repeatedly in the sequel. For $e_0 \in E(H)$, we define the \emph{contractions} along $e_0$ of $S \subset V(H\os G)$ and $\alpha=(\alpha(e\os f))_{e\os f\in E(H\os G)}\subset (0,1)$ by $S_{e_0} \eqd \{x \in V(G): e_0\os x \in S\}$ and $\alpha_{e_0} \eqd (\alpha_{e_0}(f))_{f\in E(G)} \eqd (\alpha(e_0 \os f))_{f\in E(G)}$. Then, for all $S\subset V(H\os G)$ and $(\alpha(e\os f))_{e\os f\in E(H\os G)}\subset (0,1)$ we have
    \begin{equation}\label{eq:decomposition}
		\mu_\alpha(\nu_H\os \nu_G)(S)= \sum_{e\in E(H)} \nu_H(e)\mu_{\alpha_e}(\nu_G)(S_e).
	\end{equation}

Given measures $\nu_H$ and $\mu_G$ on $E(H)$ and $V(G)$, respectively, Riesz's representation theorem guarantees that there exists a unique \emph{$\os$-measure} $\nu_H\os\mu_G$ on $V(H\os G)$ satisfying
\begin{equation} \label{eq:slash-measure}
    \int_{V(H\os G)} f d(\nu_H\os\mu_G) = \int_{E(H)} \left(\int_{V(G)} f(e \os x) d\mu_G(x)\right)d\nu_H(e)
\end{equation}
for all $f\colon V(H\os G) \to \bR$.

Using \eqref{eq:decomposition} with $\alpha \equiv \frac{1}{2}$, we see that \eqref{eq:slash-measure} implies
	\begin{equation}\label{eq:decomposition2}
		\mu(\nu_H\os \nu_G)= \nu_H\os\mu(\nu_G)
	\end{equation}
whenever $\nu_H$ and $\nu_G$ are measures on $E(H)$ and $E(G)$, respectively.

\subsection{$\os$-metrics on $\os$-products}
\label{sec:os-met}
Let $H$ be a graph and $G$ an $s$-$t$ graph. Let $\sd_H$ and $\sd_G$ be geodesic metrics on $V(H)$ and $V(G)$, respectively. We define the \emph{$\os$-geodesic metric} $\sd_H\os\sd_G$ to be the unique geodesic metric on $V(H\os G)$ satisfying
	\begin{equation}\label{eq:prod-metric}
		(\sd_H\os\sd_G)(e\os f) = \sd_H(e)\cdot\sd_G(f),
	\end{equation}
for all $e\os f \in E(H\os G)$.

Observe that for any $u,v \in V(H) \subset V(H\os G)$, it holds that
	\begin{equation*}
		(\sd_H\os\sd_G)(u,v) = \sd_H(u,v)\cdot\sd_G(s(G),t(G)).
	\end{equation*}
Hence, if the geodesic metric on $G$ is normalized, i.e. $\sd_G(s(G),t(G)) = 1$, then the canonical inclusion of $(V(H),\sd_H)$ in $(V(H\os G),\sd_H\os\sd_G)$ is an isometric embedding. Note also that for any $e \in E(H)$ and $u,v \in V(G)$, it clearly holds that
\begin{equation*}
    (\sd_H\os\sd_G)(e\os u,e\os v) = \sd_H(e)\cdot\sd_G(u,v).
\end{equation*}

\section{Isoperimetric dimension of $\os$-products and $\os$-powers}
\label{sec:isoperimetric-inequalities}

The main goal of this section is to compute the isoperimetric dimension of $\os$-powers of graphs. This is accomplished with Theorem~\ref{theo:isoper-inequality-for-slashpowers}. To prove this theorem, we study the behavior of isoperimetric ratios under $\os$-products. In Definition \ref{def:iso-dim} and Section \ref{sec:iso->Sob} we considered measures on the edge and vertex sets that were independent of each other. In our study of the isoperimetric dimension of $\os$-products we require a certain compatibility condition between the two measures. In some sense the measure on the vertex set is governed by the measure on the edge set.

For $G$ a graph, a probability measure $\nu$ on $E(G)$, a geodesic metric $\sd$ on $V(G)$, $\delta \in [1, \infty)$, and $\alpha=(\alpha(e))_{e\in E(G)}\subset (0,1)$, we define the {\em isoperimetric ratio of $S\subset V(G)$} by
    \begin{equation}
		q_{\sd,\nu ,\alpha, \delta}(S) \eqd \frac {\Per_{\sd,\nu}(S)}{\min\{ \mu_{\alpha}(\nu)(S), \mu_{\alpha}(\nu)(S^c)\}^{\frac{\delta-1}\delta}  }.
	\end{equation}
Thus, $G$ has $(\mu_{\alpha}(\nu),\nu, \sd)$-isoperimetric dimension $\delta$ with constant $C$ if $q_{\sd,\nu ,\alpha, \delta}(S)\ge 1/C$ for all $S\subset V(G)$.

Since obviously $\partial(S) = \partial(S^c)$ and thus $\Per_{\sd,\nu}(S) = \Per_{\sd,\nu}(S^c)$, it is certainly true that $$q_{\sd,\nu ,\alpha, \delta}(S) = \max\{\tilde{q}_{\sd,\nu ,\alpha, \delta}(S), \tilde{q}_{\sd,\nu ,\alpha, \delta}(S^c)\}$$ where for all $\emptyset\neq S\subset V(G)$,
	\begin{equation}\label{eq:tilde-iso}
		\tilde{q}_{\sd,\nu ,\alpha, \delta}(S) \eqd \frac {\Per_{\sd,\nu}(S)}{\mu_{\alpha}(\nu)(S)^{\frac{\delta-1}\delta}  }.
	\end{equation}
Note here that for all $S\neq \emptyset$, $\mu_{\alpha}(\nu)(S)\neq 0$ since $\nu$ is fully supported and $\alpha(e)>0$ for all $e\in E(G)$.
We will conveniently referred to \eqref{eq:tilde-iso} as the $\sim$-isoperimetric ratio. First we prove a general lemma showing that, in order to lower bound isoperimetric ratios, it suffices to consider only connected subsets. Recall that a subset $S$ of $V(G)$ is \emph{connected} if any two vertices $x,y$ in $S$ can be connected by a path made of vertices in $S$. If $S \subset V(G)$, a \emph{connected component} of $S$ is a maximal connected subset of $S$.

\begin{prop}\label{prop:connected-components}
Let $G$ be a graph and $\nu$ a probability measure on $E(G)$. Let $\alpha=(\alpha(e))_{e\in E(G)}\subset (0,1)$ and $S\subset V(G)$ with $\mu_{\alpha}(\nu)( S)\le \mu_{\alpha}(\nu)(S^c)$ and  let $S_1, S_2, \ldots, S_n$ be its connected components, then 
	\begin{equation*}
		q_{\sd,\nu,\delta,\alpha}(S)\ge \min_{1\le j \le n} q_{\sd,\nu,\delta,\alpha}(S_j).
	\end{equation*}
\end{prop}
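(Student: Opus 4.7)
The plan is to use two key observations. First, since the $S_j$ are the connected components of $S$, no edge of $G$ joins a vertex of $S_i$ to a vertex of $S_j$ for $i \neq j$ (otherwise the two components would be the same component). Consequently, an edge with one endpoint in some $S_j$ and the other in $S_j^c$ must actually have that other endpoint in $S^c$. Together with the reverse inclusion, this gives the disjoint decomposition
\begin{equation*}
\partial_G S = \bigsqcup_{j=1}^n \partial_G S_j,
\end{equation*}
and therefore
\begin{equation*}
\Per_{\sd,\nu}(S) = \sum_{j=1}^n \Per_{\sd,\nu}(S_j), \qquad \mu_\alpha(\nu)(S) = \sum_{j=1}^n \mu_\alpha(\nu)(S_j).
\end{equation*}

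Second, I would reduce the isoperimetric ratios to $\sim$-ratios. Because $S_j \subset S$ and $\mu_\alpha(\nu)(S) \le \mu_\alpha(\nu)(S^c)$, we have $\mu_\alpha(\nu)(S_j) \le \mu_\alpha(\nu)(S) \le \mu_\alpha(\nu)(S^c) \le \mu_\alpha(\nu)(S_j^c)$, so the $\min$ in the denominator of $q_{\sd,\nu,\delta,\alpha}(S_j)$ is attained at $\mu_\alpha(\nu)(S_j)$. Hence $q_{\sd,\nu,\delta,\alpha}(S_j) = \tilde{q}_{\sd,\nu,\delta,\alpha}(S_j)$, and by the same reasoning for $S$ itself, $q_{\sd,\nu,\delta,\alpha}(S) = \tilde{q}_{\sd,\nu,\delta,\alpha}(S)$.

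Writing $a_j = \Per_{\sd,\nu}(S_j) \ge 0$, $b_j = \mu_\alpha(\nu)(S_j) > 0$, and $m = \min_j a_j / b_j^{(\delta-1)/\delta}$, the definitions give $a_j \ge m\, b_j^{(\delta-1)/\delta}$ for every $j$. Summing and using the subadditivity of the map $x \mapsto x^{(\delta-1)/\delta}$ on $[0,\infty)$ (valid since $(\delta-1)/\delta \in [0,1]$), I obtain
\begin{equation*}
\sum_{j=1}^n a_j \ge m \sum_{j=1}^n b_j^{(\delta-1)/\delta} \ge m \Bigl(\sum_{j=1}^n b_j\Bigr)^{(\delta-1)/\delta}.
\end{equation*}
Dividing through by $(\sum_j b_j)^{(\delta-1)/\delta}$ and applying the decomposition above, the left-hand side is exactly $\tilde{q}_{\sd,\nu,\delta,\alpha}(S) = q_{\sd,\nu,\delta,\alpha}(S)$ and the right-hand side is $\min_j q_{\sd,\nu,\delta,\alpha}(S_j)$, which finishes the proof.

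The main things to be careful about are: (i) verifying rigorously that no edge crosses between distinct connected components of $S$ so that the boundary decomposes disjointly (a direct consequence of maximality of components), and (ii) the elementary but essential inequality $(\sum b_j)^p \le \sum b_j^p$ for $p \in [0,1]$ and $b_j \ge 0$, which is what makes the concavity work in the favorable direction here. Neither is conceptually hard, so I expect the whole argument to be short.
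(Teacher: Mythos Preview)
Your proof is correct and follows essentially the same approach as the paper: decompose the boundary as a disjoint union over the components, then apply the elementary inequality $\frac{\sum a_j}{(\sum b_j)^r} \ge \min_j \frac{a_j}{b_j^r}$ for $r = \tfrac{\delta-1}{\delta} \in [0,1]$, which both you and the paper derive from the subadditivity of $x \mapsto x^r$. Your explicit reduction to the $\tilde{q}$-ratios via $\mu_\alpha(\nu)(S_j) \le \mu_\alpha(\nu)(S_j^c)$ is a small clarification the paper leaves implicit.
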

\begin{proof} Since 
the boundaries of
$S_j$, $j=1, 2, \ldots, n$, are pairwise disjoint,  it follows that  
$$q_{\sd,\nu,\delta,\alpha}(S)=\frac{\sum_{j=1}^n \Per_{\sd,\nu}(S_j)}{\Big(\sum_{j=1}^n \mu_{\alpha}(\nu)(S_j)\Big)^{\frac{\delta-1}\delta}}.$$
Thus the proposition follows from the following claim.
\begin{clai}\label{claim:connected-components}  Let $0<r\le 1$, $n\in \bN$, $a_1, \dots, a_n$ non-negative numbers, and $b_1, \dots, b_n$ positive numbers. Then 
	\begin{equation}\label{eq:connected-components}
		\frac{\sum_{j=1}^na_j}{\Big(\sum_{j=1}^n b_j\Big)^r}\ge \min_{1 \le j \le n} \frac{a_j}{b_j^r}.
	\end{equation}
\end{clai} 
\begin{proof}[Proof of Claim \ref{claim:connected-components}]
	\begin{align*}
		\min_{1 \le i \le n} \frac{a_i}{b_i^r} \Big(\sum_{j=1}^n b_j\Big)^r & =  \Big(\sum_{j=1}^n b_j \min_{1 \le i \le n} \frac{a_i^{1/r}}{b_i}  \Big)^r \le  \Big( \sum_{j=1}^n a_j^{1/r} \Big)^r \le  \sum_{j=1}^n a_j,
	\end{align*}
where in the last inequality we used that $r\in (0,1]$.
\end{proof}
\end{proof}

\subsection{Behavior of isoperimetric ratios under $\os$-products}
\label{sec:general-iso}
Throughout this subsection, fix an isoperimetric exponent $\delta \in [1,\infty)$, a graph $H$, an $s$-$t$ graph $G$, probability measures $\nu_H$ and $\nu_G$ on $E(H)$ and $E(G)$, respectively, and geodesic metrics $\sd_H$ and $\sd_G$, on $V(H)$ and $V(G)$ respectively. We assume that $\sd_G$ is \emph{normalized}, meaning $\sd_G(s(G),t(G)) = 1$.

First let us introduce some convenient and simplified notation. We will simply write $\Per_{H},~\Per_{G}$, and $\Per_{H\os G}$ for $\Per_{\nu_{H},\sd_{H}}$, $\Per_{\nu_{G},\sd_{G}}$, and $ \Per_{ \nu_H\os \nu_G, \sd_{H\os G }}$, respectively. Similarly, for $(\alpha(e))_{e\in E(H\os G)}$, $(\beta(e))_{ e\in E(H)}$, $(\gamma(e))_{e\in E(G)} \subset (0,1)$, we will omit references to the (fixed) metrics, measures, and isoperimetric exponent, and we will abbreviate the isoperimetric ratios $q_{\sd_H\os \sd_G,\nu_H\os \nu_G, \delta,\alpha}$, $q_{\sd_H,\nu_H, \delta,\beta}$, and $q_{\sd_G,\nu_G, \delta,\gamma}$, by $q_{H\os G,\alpha},~q_{H,\beta}$, and $q_{G,\gamma}$, respectively. We apply the same rules for the $\sim$-isoperimetric ratios. The induced measures $\mu_\alpha(\nu_H\os \nu_G)$, $\mu_{\beta}(\nu_H)$, and $\mu_{\gamma}(\nu_G)$, will be shorten to $\mu_{H\os G,\,\alpha}$, $\mu_{H,\,\beta}$, and $\mu_{G,\,\gamma}$, respectively. 

We start with a first intuitive lemma which says that the $\sim$-isoperimetric ratio of a nonempty subset $S$ of $H\os G$ contained entirely inside a copy of $G$ (and not containing the end vertices) is up to some natural scaling factors and appropriate weights the $\sim$-isoperimetric ratio of $S$ considered in $G$. For $e \in E(H)$ and $S \subset V(G)$, we define the \emph{lift} of $S$ in the $e$-th copy of $G$ by $e\os S := \{e\os x\colon x\in S\} \subset V(H\os G)$.

\begin{lemm} \label{lem:insideGformula}
For every $(\alpha(e))_{e \in E(H\os G)}\subset (0,1)$, $e_0 \in E(H)$, and $S \subset V(G) \setminus \{s(G),t(G)\}$ with $S\neq \emptyset$,
	\begin{equation*}
		\tilde{q}_{H\os G,\alpha}(e_0\os S) = \frac{\nu_H(e_0)^{\frac{1}{\delta}}}{\sd_H(e_0)} \tilde{q}_{G,\alpha_{e_0}}(S).
	\end{equation*}
\end{lemm}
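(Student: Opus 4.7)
My approach is to directly compute the numerator (perimeter) and denominator (induced measure) of the $\sim$-isoperimetric ratio of $e_0\os S$ in $H\os G$ and factor out the contributions coming from the copy of $H$. The observation that makes the computation clean is that since $S \subset V(G)\setminus\{s(G),t(G)\}$, the lifted set $e_0\os S$ is disjoint from every other copy $e\os V(G)$ for $e \neq e_0$: inspecting Definition~\ref{def:slash-product}, the only identifications $e_0\os u = e\os v$ that can occur with $e\neq e_0$ require $u,v \in \{s(G),t(G)\}$, and those are excluded by hypothesis on $S$.

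For the measure, I would apply the decomposition \eqref{eq:decomposition} to $e_0\os S$. The contraction $(e_0\os S)_e = \{x\in V(G) : e\os x \in e_0\os S\}$ equals $S$ when $e=e_0$ and is empty for $e\neq e_0$ by the disjointness above, so only one term in the sum survives:
\begin{equation*}
    \mu_{H\os G,\alpha}(e_0\os S) \;=\; \nu_H(e_0)\,\mu_{G,\alpha_{e_0}}(S).
\end{equation*}

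For the perimeter, an edge $e\os f \in E(H\os G)$ lies in $\partial_{H\os G}(e_0\os S)$ precisely when exactly one endpoint $e\os f^\pm$ belongs to $e_0\os S$. By the disjointness, this forces $e=e_0$, and then $f \in \partial_G S$; conversely any such $e_0\os f$ is a boundary edge. Using the factorizations \eqref{eq:prod-measure} and \eqref{eq:prod-metric}, this yields
\begin{equation*}
    \Per_{H\os G}(e_0\os S) \;=\; \sum_{f\in\partial_G S}\frac{(\nu_H\os\nu_G)(e_0\os f)}{(\sd_H\os\sd_G)(e_0\os f)} \;=\; \frac{\nu_H(e_0)}{\sd_H(e_0)}\,\Per_{G}(S).
\end{equation*}

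Dividing the two displayed identities, the $\mu_{G,\alpha_{e_0}}(S)^{(\delta-1)/\delta}$ factor combines with $\Per_G(S)$ to reconstitute $\tilde q_{G,\alpha_{e_0}}(S)$, while the $\nu_H(e_0)$ factors collapse via $\nu_H(e_0)^{1-(\delta-1)/\delta} = \nu_H(e_0)^{1/\delta}$, giving the claimed formula. I do not anticipate any real obstacle: the content is entirely bookkeeping, and the one subtle point---that $e_0\os S$ genuinely sits inside a single copy of $G$ with boundary equal to the lift of $\partial_G S$---is exactly what the exclusion of $s(G)$ and $t(G)$ from $S$ is there to guarantee.
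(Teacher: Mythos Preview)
Your proposal is correct and matches the paper's proof essentially line for line: compute the perimeter via $\partial_{H\os G}(e_0\os S)=e_0\os\partial_G S$ (using that $S$ avoids $s(G),t(G)$), compute the measure via the decomposition \eqref{eq:decomposition}, and divide. Your added justification for why $e_0\os S$ sits entirely inside one copy of $G$ is a welcome bit of detail that the paper leaves implicit.
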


\begin{proof} Since $S$ does not contain the endpoints we have $\partial_{H\oslash G} (e_0\os S)=e_0\os \partial_G(S)$, and thus 
\begin{align*} 
\Per_{H\oslash G}(e_0\os S)= \sum_{e\in \partial_G(S)} \frac{\nu_H(e_0)\nu_G(e)}{\sd_H(e_0)d_G(e)}=\frac{\nu_H(e_0)}{\sd_H(e_0)} \Per_G(S).
   \end{align*}
Equation \eqref{eq:decomposition} tells us that $\mu_{H\os G,\alpha}(e_0\os S)=\nu_H(e_0) \mu_{G,\alpha_{e_0}}(S)$,
which yields 
\begin{align*}  \tilde{q}_{H\os G,\alpha}(e_0\os S) = \frac{\frac{\nu_H(e_0)}{\sd_H(e_0)} \Per_G(S)}{[\nu_H(e_0)\mu_{G,\alpha_{e_0}}(S)]^{\frac{\delta-1}{\delta}}}
=\frac{\nu_H(e_0)^{\frac1\delta}}{\sd_H(e_0)}\tilde q_{G,\alpha_{e_0}}(S).
\end{align*} 
\end{proof}

The next lemma is our main technical observation for isoperimetric ratios of subsets containing both endpoints of at least an edge in $H$. We need one more piece of notation pertaining to lifts of edges of $G$. For $e \in E(H)$, we define the \emph{lift} of $F \subset E(G)$ in the $e$-th copy of $G$ by $e\os F := \{e\os f\colon f \in F\} \subset E(H\os G)$. 

\begin{lemm}\label{lem:dichotomy} 
Let $\alpha=(\alpha(e\os f))_{e\os f\in E(H\os G)}\subset (0,1)$
and $S \ksubset V( H\os G)$, with $\mu_{ {H\os G}, \alpha}(S)\le \frac12$. If there exists $e_0 \in E(H)$ such that $\{e_0^-, e_0^+\} \subset S$, then at least one of the following conditions (a) and (b) hold.
	\begin{enumerate}
		\item[(a)]$S \cup \big(e_0\os S_{e_0}^c\big)\not= V(H\os G)$ and $q_{H \os G, \alpha}(S) \ge q_{H \os G, \alpha}\big(S \cup( e_0 \os S_{e_0}^c)\big)$,
		\item[] or
		\item[(b)] $q_{H \os G, \alpha}(S) \ge \frac{ \nu_H(e_0)^{ \frac{1}{\delta} } }{ \sd_H(e_0) } \tilde{q}_{G,\alpha_{e_0}}(S_{e_0}^c).$
	\end{enumerate}
Note that in (a) the complement is taken in $V(G)$, i.e. $S^c_{e_0} := V(G)\setminus S_{e_0} = \{x\in V(G)\colon e_0 \os x \not\in S\}$.
\end{lemm}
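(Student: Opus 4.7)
The plan is to exploit the modified set $S' := S \cup (e_0 \os S_{e_0}^c)$, which fills in every vertex of the $e_0$-copy of $G$ that is missing from $S$. Since $e_0 \os s(G) = e_0^- \in S$ and $e_0 \os t(G) = e_0^+ \in S$, we have $\{s(G), t(G)\} \subset S_{e_0}$, so $S_{e_0}^c$ lies entirely in the interior of the $e_0$-copy. Consequently $S \cap (e_0 \os S_{e_0}^c) = \emptyset$, and $S'$ agrees with $S$ outside the $e_0$-copy. The first step is to derive the two exact identities
\begin{align*}
    \mu_{H\os G,\,\alpha}(S') - \mu_{H\os G,\,\alpha}(S) &= \nu_H(e_0)\,\mu_{G,\,\alpha_{e_0}}(S_{e_0}^c),\\
    \Per_{H\os G}(S) - \Per_{H\os G}(S') &= \tfrac{\nu_H(e_0)}{\sd_H(e_0)}\,\Per_G(S_{e_0}^c).
\end{align*}
The first follows from \eqref{eq:decomposition} together with $S'_{e_0} = V(G)$; the second follows from a careful boundary count showing $\{f \in E(G) : e_0 \os f \in \partial_{H\os G}(S)\} = \partial_G(S_{e_0}^c)$, where one uses $\{s(G),t(G)\} \subset S_{e_0}$ to handle the edges of $G$ that are incident to the source or target, together with $\partial_{H\os G}(S') \cap (e_0 \os E(G)) = \emptyset$ (since $S'$ contains the whole $e_0$-copy).

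With these identities in hand, I split on the size of $\mu_{H\os G,\,\alpha}(S')$. If $\mu_{H\os G,\,\alpha}(S') \le 1/2$, then $S' \ne V(H\os G)$, $\min\{\mu(S'), \mu((S')^c)\} = \mu(S') \ge \mu(S)$, and $\Per_{H\os G}(S') \le \Per_{H\os G}(S)$; combining these immediately yields $q_{H\os G,\,\alpha}(S') \le q_{H\os G,\,\alpha}(S)$, which is (a).

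If instead $\mu_{H\os G,\,\alpha}(S') > 1/2$, I aim for (b). Abbreviating $a := \mu_{H\os G,\,\alpha}(S)$, $c := \nu_H(e_0)$, $b := \mu_{G,\,\alpha_{e_0}}(S_{e_0}^c)$, $p := \Per_G(S_{e_0}^c)$, $d := \sd_H(e_0)$, and $r := (\delta-1)/\delta \in [0,1]$, (b) reads $\Per_{H\os G}(S) \cdot b^r \ge c^{1/\delta}\,p\,a^r / d$. In the sub-case $S' = V(H\os G)$ (equivalently $a + cb = 1$), the perimeter identity gives $\Per_{H\os G}(S) = cp/d$ exactly, so (b) reduces to $cb \ge a = 1 - cb$, which follows from $a \le 1/2$. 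In the sub-case $S' \ne V(H\os G)$, if (a) happens to hold we are done; otherwise $q(S) < q(S')$, which expands to $\Per(S)(1-a-cb)^r < (\Per(S)-cp/d)\,a^r$ and rearranges to $\Per(S) > \frac{(cp/d)\,a^r}{a^r - (1-a-cb)^r}$. Substituting this bound into (b) reduces the remaining work to the numerical inequality
\begin{equation*}
    (cb)^r + (1-a-cb)^r \ge a^r,
\end{equation*}
which follows from the subadditivity of $x \mapsto x^r$ on $[0,\infty)$ (valid because $r \in [0,1]$): $(cb)^r + (1-a-cb)^r \ge (1-a)^r \ge a^r$, where the last inequality uses $a \le 1/2$.

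The main obstacle I expect is the perimeter bookkeeping in the first step: carefully tracking which edges of $H\os G$ belong to $\partial(S)$ but not to $\partial(S')$, in particular handling the source/target identifications in the $\os$-product so that the clean contribution $\tfrac{\nu_H(e_0)}{\sd_H(e_0)}\Per_G(S_{e_0}^c)$ emerges. Once this identity is established, Case 1 is immediate, and Case 2 reduces cleanly to the subadditivity inequality above, with the two endpoint regimes $S' = V(H \os G)$ and $S' \ne V(H \os G)$ handled by essentially the same inequality.
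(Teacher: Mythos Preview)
Your proof is correct and follows essentially the same approach as the paper's: both establish the same measure and perimeter identities for $S' = S \cup (e_0 \os S_{e_0}^c)$, reduce to the case $\mu_{H\os G,\alpha}(S') > \tfrac12$, and then obtain (b) from the failure of (a) via the subadditivity of $x \mapsto x^r$ on $[0,\infty)$ (which the paper packages as Claim~\ref{claim:connected-components} and applies to $b_1+b_2 = \mu_{H\os G,\alpha}(S^c)$, whereas you unfold the same inequality directly with $\mu_{H\os G,\alpha}(S)$ in the denominator). The only differences are organizational: you treat $\mu(S') \le \tfrac12$ first (yielding (a)) rather than deriving it as a ``WLOG'' contradiction, and you handle the case $S' = V(H\os G)$ by a direct computation instead of invoking Lemma~\ref{lem:insideGformula}.
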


\begin{proof}
Assume that there exists $e_0 \in E(H)$ such that $\{e_0^-,e_0^+\} \subset S$. We will prove that if (a) does not hold then (b) holds.

In the case that $S \cup \big(e_0\os S_{e_0}^c\big) = V(H\os G)$, and thus  $S^c= e_0\os S^c_{e_0}\subset e_0\os (V(G)\setminus\{s(G), t(G)\})$, it follows that
	\begin{align*}
		q_{H \os G, \alpha}(S) =q_{H \os G, \alpha}(S^c)=q_{H \os G, \alpha}(e_0\os S_{e_0}^c) \overset{\text{Lem }\ref{lem:insideGformula}}{\ge} \frac{ \nu_H(e_0)^{ \frac{1}{\delta} } }{ \sd_H(e_0) } \tilde{q}_{G, \alpha_{e_0}}(S_{e_0}^c),
	\end{align*}
yielding (b).

So we assume that $S \cup \big(e_0\os S_{e_0}^c\big) \neq V(H\os G)$ and $q_{H \os G, \alpha}(S) < q_{H \os G, \alpha}\big(S \cup( e_0 \os S_{e_0}^c)\big)$. Necessarily $S_{e_0}^c \neq \emptyset$. Letting $\tilde{S} := S \cup (e_0\os S_{e_0}^c)$, we can also assume without loss of generality that 
	\begin{equation}\label{eq:wlog}
		\mu_{H \os G, \alpha }(\tilde{S}) > \frac12\ge  \mu_{ H \os G, \alpha }((\tilde{S})^c)
	\end{equation}
since otherwise
	\begin{align*}
		  q_{ H \os G, \alpha }(\tilde S) & = \frac{ \Per_{ { H \os G } }(\tilde S) }{ \mu_{{ H \os G }, \alpha} (\tilde S)^{ \frac{\delta-1}{\delta} } } = \frac{  \Per_{ H \os G }(S) - \frac{ \nu_{H}(e_0) }{ \sd_H(e_0) } \Per_{ G }(S_{e_0}) }{ \mu_{ H \os G, \alpha}( \tilde S)^{ \frac{\delta-1}{\delta} } }\\
		  & \le \frac{  \Per_{H \os G }(S) }{ \mu_{ H \os G, \alpha}(S)^{ \frac{\delta-1}{\delta} } } 
		   = q_{ H \os G, \alpha }(S),
	\end{align*}
contradicting our assumption. If we let 
	\begin{align*}
	&a_1  :=  \Per_{  H \os G  }(S) - \frac{ \nu_{H}(e_0) }{ \sd_H(e_0) } \Per_{ G }(S_{e_0}^c),
	\ a_2  := \frac{ \nu_{H}(e_0) }{ \sd_H(e_0) } \Per_{G}(S_{e_0}^c),\\
	&b_1  :=  \mu_{ H \os G, \alpha}(S^c) - \nu_H(e_0) \mu_{ {G, \alpha_{e_0}} } (S_{e_0}^c),
	\ b_2  := \nu_H(e_0) \mu_{ G, \alpha_{e_0} } (S_{e_0}^c),
	\end{align*}
then	
	\begin{align*}
		 \frac{ a_1 + a_2 }{ (b_1 + b_2)^{ \frac{\delta -1}{\delta} } } & =  \frac{  \Per_{H \os G  }(S) }{ \mu_{ H \os G, \alpha}(S^c) ^{ \frac{\delta-1}{\delta} } } \le \frac{  \Per_{ { H \os G } }(S) }{ \min \{ \mu_{ H \os G, \alpha}(S^c), \mu_{ H \os G, \alpha}(S) \} ^{ \frac{\delta-1}{\delta} } } = q_{ H \os G, \alpha }(S),
	\end{align*}
and
	\begin{align*}
		 \frac{ a_1 }{ b_1^{ \frac{\delta -1}{\delta} } } & =  \frac{ \Per_{ H \os G }(S) - \frac{ \nu_{H}(e_0) }{ \sd_H(e_0) } \Per_{ G }(S_{e_0}^c) }{ \big(\mu_{ H \os G, \alpha}(S^c) - \nu_H(e_0) \mu_{ G, \alpha_{e_0} } (S_{e_0}^c) \big)^{\frac{\delta -1}{\delta}} } = \frac{ \Per_{{ H \os G } }(\tilde S) }{ \mu_{ H \os G, \alpha}((\tilde S)^c)^{\frac{\delta -1}{\delta}} } \stackrel{\eqref{eq:wlog}}{=} q_{ H \os G, \alpha }( \tilde S).
	\end{align*}
By our assumption, we have 
	\begin{equation}
		 \frac{ a_1 + a_2 }{ (b_1 + b_2)^{ \frac{\delta -1}{\delta} } } < \frac{ a_1 }{ b_1^{ \frac{\delta -1}{\delta} } }.
	\end{equation}
Then by Claim~\ref{claim:connected-components} in the proof of Proposition~\ref{prop:connected-components}, it follows that
	\begin{equation}
		 \frac{ a_2 }{ b_2^{ \frac{\delta -1}{\delta} } } \le \frac{ a_1 + a_2 }{ (b_1 + b_2)^{ \frac{\delta -1}{\delta} } },
	\end{equation}
which gives (b) after substitution.
\end{proof}

The next theorem is our main result on isoperimetric inequalities. It relates isoperimetric rations of $H \os G$ in terms of geometric parameters of $H$ and $G$ and their isoperimetric ratios.

\begin{theo}\label{thm:slash-isop} 
For $\delta \in [1,\infty)$, a graph $H$, an $s$-$t$ graph $G$, probability measures $\nu_H$ and $\nu_G$ on $E(H)$ and $E(G)$ respectively, and geodesic metrics $\sd_H$ and $\sd_G$, on $V(H)$ and $V(G)$, respectively, with $\sd_G$ \emph{normalized}, we have
	\begin{align*}
		\min_{S \subsetneq V(H\os G) \atop S \neq \emptyset} \inf_{ \alpha\in (0,1)^{E(H\os G)} } & q_{H \os G,\alpha}(S) \ge\\
		&  \min\left\{ \min_{e\in E(H)} \frac{ \nu_H(e)^{ \frac1\delta} }{ \sd_H(e) } \cdot \min_{ S \cap \{s(G), t(G) \} = \emptyset \atop S\not=\emptyset} \inf_{ \alpha \in (0,1)^{E(G)} }  \tilde{q}_{G,\alpha}(S)\right., \\
		&\hskip.8in\left. \min_{S \subsetneq V(H) \atop S \neq \emptyset}\inf_{ \alpha\in (0,1)^{E(H)} q_{H, \alpha}(S)} \cdot \min_{ \abs{ S \cap \{s(G), t(G) \} } =1 } \Per_{G}(S) \right\}
	\end{align*}
\end{theo}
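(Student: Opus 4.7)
The plan is to fix arbitrary $\alpha \in (0,1)^{E(H\os G)}$ and nonempty proper $S \subset V(H\os G)$ and show $q_{H\os G,\alpha}(S) \ge \min\{A, B\}$, where $A$ and $B$ denote the two quantities in the theorem's right-hand side; taking infima in $\alpha$ and $S$ then finishes the proof. The strategy is to iteratively ``fill in'' copies of $G$ via Lemma~\ref{lem:dichotomy}, landing at a canonical terminal set whose isoperimetric ratio directly matches either $A$ or $B$.

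\emph{Step 1 (Normalization).} By Proposition~\ref{prop:connected-components} together with the identity $q_{H\os G,\alpha}(S) = q_{H\os G,\alpha}(S^c)$, I may replace $S$ with a connected component of the smaller side and thus assume $S$ is connected and $\mu_{H\os G,\alpha}(S)\le 1/2$.

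\emph{Step 2 (Iterated saturation).} While there exists $e_0 \in E(H)$ with both $e_0^\pm \in S$ and $(S)_{e_0} \subsetneq V(G)$, I invoke Lemma~\ref{lem:dichotomy} with this edge. Alternative (b) immediately yields
\[
q_{H\os G,\alpha}(S) \ge \frac{\nu_H(e_0)^{1/\delta}}{\sd_H(e_0)}\,\tilde{q}_{G,\alpha_{e_0}}((S)_{e_0}^c),
\]
and since $e_0^\pm \in S$ forces $(S)_{e_0}^c \cap \{s(G),t(G)\} = \emptyset$, the right-hand side is bounded below by $A$. Alternative (a) produces $\tilde S = S \cup (e_0 \os (S)_{e_0}^c) \subsetneq V(H\os G)$ with $q_{H\os G,\alpha}(\tilde S) \le q_{H\os G,\alpha}(S)$; crucially, $\tilde S$ is again connected (the newly filled $e_0$-copy is connected and shares $e_0^\pm$ with $S$), satisfies $\tilde S \cap V(H) = S \cap V(H)$ (no new $V(H)$-vertices are added), and the counter $|\{e \in E(H) : e^\pm \in S,\ (S)_e \subsetneq V(G)\}|$ drops by exactly one (only $(S)_{e_0}$ becomes full; the contractions on other edges are unchanged). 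After at most $|E(H)|$ iterations, either (b) fires and we are done, or we reach a connected terminal set $S^*$ with $T := S^* \cap V(H) = S \cap V(H)$ and $(S^*)_e = V(G)$ for every $e \in E(H)$ with $e^\pm \in S^*$.

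\emph{Step 3 (Terminal analysis).} If $T = \emptyset$, connectedness places $S^*$ inside a single $e_0$-copy and disjoint from $\{e_0^-, e_0^+\}$, so Lemma~\ref{lem:insideGformula} yields $q_{H\os G,\alpha}(S^*) \ge A$. Otherwise connectedness of $S^*$ combined with the saturation property forces $H[T]$ to be weakly connected and $(S^*)_e = \emptyset$ for every $e \in E(H)$ not incident to $T$ (a vertex of $S^*$ strictly inside such an $e$-copy would be cut off from $T$, since neither $e^-$ nor $e^+$ belongs to $S^*$). For $e \in \partial_H T$, the contraction $(S^*)_e$ meets $\{s(G), t(G)\}$ in exactly one point, so $\Per_G((S^*)_e) \ge \min_{|S'\cap\{s(G),t(G)\}|=1}\Per_G(S')$; summing gives
\[
\Per_{H\os G}(S^*) = \sum_{e \in \partial_H T}\frac{\nu_H(e)}{\sd_H(e)}\Per_G((S^*)_e) \ge \Per_H(T)\cdot\min_{|S'\cap\{s(G),t(G)\}|=1}\Per_G(S').
\]
Now define $\beta \in (0,1)^{E(H)}$ by $\beta(e) := \mu_{G,\alpha_e}((S^*)_e)$ when $e^+ \in T$, $e^- \notin T$, and $\beta(e) := 1 - \mu_{G,\alpha_e}((S^*)_e)$ when $e^- \in T$, $e^+ \notin T$ (the value elsewhere is immaterial, and $\beta(e) \in (0,1)$ since $(S^*)_e$ is a proper nonempty subset of $V(G)$). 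A direct edge-by-edge computation via \eqref{eq:decomposition} shows $\mu_{H,\beta}(T) = \mu_{H\os G,\alpha}(S^*)$; since the same identity holds for complements, the denominators $\min(\mu,\mu^c)$ on both sides coincide, giving
\[
q_{H\os G,\alpha}(S^*) \ge q_{H,\beta}(T)\cdot\min_{|S'\cap\{s(G),t(G)\}|=1}\Per_G(S') \ge B.
\]

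\emph{Main obstacle.} The delicate point is the bookkeeping in Step 2 when filling an $e_0$-copy pushes $\mu(\tilde S)$ past $1/2$, since Lemma~\ref{lem:dichotomy} requires the smaller side. One handles this by interleaving Proposition~\ref{prop:connected-components} (passing to a connected component of $\tilde S^c$) with the iteration, and verifying that the edge-count invariant continues its amortized decrease; this is the only nonroutine piece, and everything else is a careful accounting exercise using the decomposition \eqref{eq:decomposition}.
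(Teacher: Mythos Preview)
Your approach is essentially the paper's, repackaged: both arguments use Lemma~\ref{lem:dichotomy} to ``saturate'' edges until reaching a set on which the computation of the base case (your Step~3, the paper's $N(S)=0$ case) goes through. The difference is purely in the bookkeeping. The paper inducts on the counter
\[
N(S)=\bigl|\{e:\{e^-,e^+\}\subset S,\ (S)_e\subsetneq V(G)\}\bigr|+\bigl|\{e:\{e^-,e^+\}\cap S=\emptyset,\ (S)_e\neq\emptyset\}\bigr|,
\]
which is symmetric under $S\leftrightarrow S^c$; this means that when a fill pushes the measure past $1/2$, one simply passes to the complement with no change in $N$, and the induction continues cleanly. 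Your counter is only the first summand, so complementation does not obviously preserve it, and that is exactly the ``main obstacle'' you flag.

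Two remarks on that obstacle. First, it is easier than you fear: if $S$ is connected with $S\cap V(H)\neq\emptyset$, then the second summand of $N(S)$ vanishes (any interior vertex of an $e$-copy forces one of $e^\pm$ into $S$ by connectedness), so $N(S)=C(S)$; and when you pass to a connected component $B$ of $\tilde S^c$ with $B\cap V(H)\neq\emptyset$, the same connectedness argument applied to both $\tilde S$ and $B$ shows that every $e$ with $e^\pm\in B$ has $\tilde S_e=\emptyset$, whence the entire connected $e$-copy lies in $B$. Thus $C(B)=0$ immediately, and you land at the terminal analysis after a \emph{single} complement step rather than an amortized iteration. Second, your claim in Step~2 that $T=S^*\cap V(H)$ equals the original $S\cap V(H)$ fails once you complement; fortunately your Step~3 never actually uses this equality, only connectedness of $S^*$ and $N(S^*)=0$.

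So the proof is correct, but you should either tighten the obstacle paragraph along the lines above, or adopt the paper's complement-symmetric counter $N(S)$ and induct on it directly, which dissolves the issue.
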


\begin{proof}
For convenience let us introduce the following parameters for $H$, $G$, and $H\os G$:
	\begin{align*}
		p_{G} & \eqd  \min_{ \abs{ S \cap \{s(G), t(G) \} } =1 } \Per_{G}(S), \\
		\tilde q^\circ_{G} &\eqd \min_{ S \cap \{s(G), t(G) \}  = \emptyset \atop S\not=\emptyset} \inf_{ \alpha \in (0,1)^{E(G)} }  \tilde{q}_{G,\alpha}(S), \\ \rho_{H} &\eqd \min_{e\in E(H)} \frac{ \nu_H(e)^{ \frac1\delta} }{ \sd_H(e) }, \\
		q_{K} & \eqd \min_{S \subsetneq V(K) \atop S \neq \emptyset\}}\inf_{ \alpha \in(0,1)^{E(K)} } q_{K,\alpha}(S), \hskip.3in \text{ for } K \in \{H,H\os G\}.
	\end{align*}
Let $\alpha = (\alpha(e))_{e \in E( H \os G)} \subset(0,1)$ be arbitrary. For each $S \subset V(H \os G)$ with $S\not\in\{\emptyset, V(H\os G)\}$, define
	\begin{align*}
		N(S) \eqd &|\{e \in E(H): \{e^-,e^+\} \cap S = \emptyset \text{ but } (e\os V(G)) \cap S \neq \emptyset\}| \\
		&\hskip.1in+ |\{e \in E(H): \{e^-,e^+\} \subset S \text{ but } (e\os V(G)) \not\subset S\}|.
	\end{align*}
We will prove that
\begin{equation} \label{eq:slash-isop}
	q_{H \os G,\alpha}(S) \ge \min \{ \rho_{H} \cdot \tilde q_G^\circ, q_{H} \cdot p_{G} \}
\end{equation}
by induction on $N(S) \in \bN\cup\{0\}$. As we will see, the base case $N(S) = 0$ requires as much work as the inductive step. Note that $N(S) = N(S^c)$, and hence by passing to $S^c$ if necessary, we may assume that $\mu_{H\os G,\alpha}(S) \leq \frac{1}{2}$ without changing the value of $q_{H\os G,\alpha}(S)$ or $N(S)$ (which are the only two quantities that matter).

Assume that $N(S) = 0$. Noting that $|S_e\cap \{s(G),t(G)\}|=1 \iff e\in \partial_H(S\cap V(H))$ and because $N(S)=0$ we have 
	\begin{equation*}
		\partial_{H\os G}(S)=\bigcup_{e\in \partial_H (S\cap V(H))}\big( e\os \partial_G(S_e)\big), 
	\end{equation*}
and thus         
	\begin{equation}\label{eq:Per}
		\Per_{H\os G}(S) = \sum_{e\in \partial_H (S\cap V(H))}\frac{\nu_H(e)}{\sd_H(e)} \Per_{G}(S_e).
	\end{equation}
It follows from \eqref{eq:decomposition} that
	\begin{align*}
		\mu_{H\os G, \alpha} (S) & \stackrel{\eqref{eq:decomposition}}{=} \sum_{e\in E(H)} \nu_H(e) \mu_{G, {\alpha_e}} (S_e) \\
		&\overset{\eqref{eq:induced-measure}}{=} \sum_{e\in E(H)\atop e^-,e^+\in S} \nu_H(e)+ \sum_{e\in E(H)\atop e^+\in S, e^-\not\in S} \nu_H(e) \mu_{G, {\alpha_e}} (S_e)+ \sum_{e\in E(H)\atop e^-\in S, e^+\not\in S} \nu_H(e) \mu_{G, {\alpha_e}} (S_e),
	\end{align*}
where the last equality follows from the assumption that $N(S)=0$, and thus that
$e^+,e^-\in S$ implies that $S_e=V(G)$, and $e^+,e^-\not\in S$ implies that $S_e=\emptyset$. 
Hence after defining
\begin{align*}
    \beta(e) \eqd \begin{cases} \frac12 &\text{if   $e^+,e^-\in S$, or  $e^+,e^-\not\in S$,}\\
                                                \mu_{G,{\alpha_e}}(S_e)  &\text{if  $e^+\in S$, and $e^-\not\in S$,}\\
                                                1-\mu_{G,\alpha_e}(S_e)  &\text{if   $e^+\not\in S$, and  $e^-\in S$,}\end{cases}
\end{align*}
we get
\begin{equation}\label{eq:mu} 
    \mu_{H\os G,\alpha} (S)= \sum_{e\in E(H),\atop e^+\in S\cap V(H)} \nu_H(e) \beta(e)+ \sum_{e\in E(H), \atop e^-\in S\cap V(H)} \nu_H(e)(1- \beta(e)) \overset{\eqref{eq:induced-measure}}{=} \mu_{H,\beta} (S\cap V(H)).
\end{equation}       
Combining \eqref{eq:Per} and \eqref{eq:mu}, 
we obtain 
\begin{align*}
    q_{H\os G, \alpha}(S) &= \frac{\Per_{H\os G}(S)}{\mu_{H\os G,\,\alpha}(S)^{\frac{\delta-1}\delta}} \\
    &\overset{\eqref{eq:Per},\eqref{eq:mu}}{=} \frac{\sum_{e\in \partial_H (S\cap V(H))} \frac{\nu_H(e)}{\sd_H(e)} \Per_{G}(S_e) }{\mu_{H,\,\beta} (S\cap V(H))^{\frac{\delta-1}\delta}} \\
    &\geq \frac{\Per_H(S\cap V(H)) \cdot p_G}{\mu_{H,\,\beta} (S\cap V(H))^{\frac{\delta-1}\delta}} \\
    &= q_{H,\,\beta}(S\cap V(H)) \cdot p_G 
    \geq q_H \cdot p_G,
\end{align*}
where in the last equality, we've used the fact that
$\mu_{H,\beta} (S\cap V(H)) \overset{\eqref{eq:mu}}{=} \bar{\mu}_{H\os G,\alpha}(S) \leq \frac{1}{2}$. Inequality \eqref{eq:slash-isop} follows in the base case $N(S) = 0$.

Now we prove the inductive step. Assume $N(S) > 0$ and that \eqref{eq:slash-isop} holds for all $S' \subset V(H\os G)$ with $S' \not\in \{\emptyset,V(H\os G)\}$ and $N(S') < N(S)$. Of course, in this situation we have two cases: (I) there exists $e \in E(H)$ with $\{e^-,e^+\} \subset S$ but $e\os V(G) \not\subset S$, and (II) there exists $e \in E(H)$ with $\{e^-,e^+\} \cap S = \emptyset$ but $(e\os V(G)) \cap S \neq \emptyset$.

Assume that (I) holds. Let $e_0 \in E(H)$ such that $\{e_0^-,e_0^+\} \subset S$ but $e_0\os V(G) \not\subset S$. Then, setting $S' := S \cup (e_0\os S_{e_0}^c)$ (and recalling that we may assume $\mu_{H\os G,\alpha}(S) \leq \frac{1}{2}$), Lemma~\ref{lem:dichotomy} implies that one of the following holds:
\begin{itemize}
    \item[(a)] $S' \not\in \{\emptyset,V(H\os G)\}$ and $q_{H\os G,\alpha}(S) \geq q_{H\os G,\alpha}(S')$.
    \item[(b)] $q_{H\os G,\alpha}(S) \geq \rho_H \cdot\tilde q_G^\circ$.
\end{itemize}
Since $N(S') = N(S) - 1$, if (a) holds, then we get \eqref{eq:slash-isop} by the inductive hypothesis. If (b) holds then we get \eqref{eq:slash-isop} automatically. This completes the proof for case (I).

Now assume that (II) holds. Let $B$ be a connected component of $S$ with $q_{H\os G,\alpha}(S) \geq q_{H\os G,\alpha}(B)$, which exists by Proposition~\ref{prop:connected-components}. Note that $\mu_{H\os G,\alpha}(B) \leq \mu_{H\os G,\alpha}(S) \leq \frac{1}{2}$. Consider the set $F:=\{e \in E(H): \{e^-,e^+\} \cap B = \emptyset \text{ but } (e\os V(G)) \cap B \neq \emptyset\}$. Since $B$ is a connected component, necessarily $\abs{F}\in \{0,1\}$. Therefore exactly one of the following two subcases must hold: (i) $\abs{F}=1$, i.e. there exist $e\in E(H)$ and $B' \subset V(G) \setminus \{s(G),t(G)\}$ such that $B = e\os B'$ , or (ii) $\abs{F}=0$, i.e. $\{e \in E(H): \{e^-,e^+\} \cap B = \emptyset \text{ but } (e\os V(G)) \cap B \neq \emptyset\} = \emptyset$ . Assume that (i) holds. Then Lemma~\ref{lem:insideGformula} implies $q_{H\os G,\alpha}(B) \geq \rho_H \cdot\tilde q_G^\circ$, and \eqref{eq:slash-isop} follows. Finally, assume that (ii) holds. Then the following is true.
\begin{itemize}
    \item Since $B \subset S$ is a connected component, $$\{e \in E(H): \{e^-,e^+\} \subset B \text{ but } (e\os V(G)) \not\subset B\} \\ \subset \{e \in E(H): \{e^-,e^+\} \subset S \text{ but } (e\os V(G)) \not\subset S\}.$$
    \item Since we are in case (II), $$\{e \in E(H): \{e^-,e^+\} \cap S = \emptyset \text{ but } (e\os V(G)) \cap S \neq \emptyset\} \neq \emptyset.$$
    \item Since we are in subcase (ii), $$\{e \in E(H): \{e^-,e^+\} \cap B = \emptyset \text{ but } (e\os V(G)) \cap B \neq \emptyset\} = \emptyset.$$
\end{itemize}
These three items together with the definition of $N(S),N(B)$ imply $N(B) < N(S)$. Hence \eqref{eq:slash-isop} holds by the inductive hypothesis. This completes the proof of the inductive step in all cases.
\end{proof}

\subsection{The isoperimetric dimension of $\os$-powers}\label{sec:slash-powers}
In this subsection, we again fix an isoperimetric exponent $\delta \in [1,\infty)$, an $s$-$t$ graph $G$,  a normalized geodesic metric $\sd_G$ on $V(G)$, and a fully supported probability measure $\nu_G$ on $E(G)$. We retain the same notation from the previous subsection.

\begin{defi}[$\os$-powers] \label{def:slash-power}
Given an $s$-$t$ graph $G$, we define its $n$-th $\os$-power $G^{\os n}$ for $n \in \bN$ recursively as follows: $G^1 :=G$ and $G^{\os n+1}:=G^{\os n}\os G$. %(see Definition~\ref{def:slash-product} for the definition of the $\os$-product $H\os G$). See Figure~\ref{fig:D22-slash-D22} for a picture of $\sD_{2,2}^{\os 2}$. 
\end{defi}

\begin{rema}\label{rem:prod}
It holds that $E(G^{\os n})=\{ \os_{j=1}^n e_j: \{e_j\}_{j=1}^n \in E(G)\}$, where $\os_{j=1}^n e_j$ is defined in the obvious way.
\end{rema}
Recall the following notation from the previous subsection:
	\begin{align*}
		q_{G^{\os n},\alpha} &= \min_{S \subsetneq V(G^{\os n}) \atop S \neq \emptyset} \frac{\Per_{\nu_G^{\os n},\sd_G}(S)}{\min\left\{\mu_{\alpha}(\nu_G^{\os n})(S), \mu_{\alpha}(\nu_G^{\os n})(S^c)\right\}^{\frac{\delta-1}{\delta}}} \\
		q_{G^{\os n}} &= \inf_{\alpha\in (0,1)^{E(G^{\os n})} } q_{G^{\os n},\alpha} \\
		\tilde q^\circ_{G} & = \min_{ S \cap \{s(G), t(G) \}  = \emptyset \atop S\not=\emptyset} \inf_{ \alpha \in (0,1)^{E(G)} }  \tilde{q}_{G,\alpha}(S) .
	\end{align*}
We characterize precisely when a $\os$-power admits a uniform lower bound on the isoperimetric ratio.

\begin{theo}\label{theo:isoper-inequality-for-slashpowers} Let $G$ be an $s$-$t$ graph and assume that $|V(G)| > 2$. Then the following conditions are equivalent:
	\begin{enumerate}
  		\item There exists $c>0$ such that for all $n\in\bN$, $$\min_{S \subsetneq V(G^{\os n}) \atop S \neq \emptyset} \frac{\Per_{\nu_G^{\os n},\sd_G^{\os n}}(S)}{\min\left\{\mu(\nu_G^{\os n})(S),\mu(\nu_G^{\os n})(S^c)\right\}^{\frac{\delta-1}{\delta}}}\ge c.$$
 		\item $$  \rho_G \eqd \min_{e\in E(G)} \frac{\nu_G^{\frac1\delta}(e)}{\sd_G(e)} \ge 1 \text{ and } p_G \eqd \min_{S\subset V(G) \atop |S\cap \{s(G),t(G)\}|=1} \Per_{\nu_G,\sd_G}(S) \geq 1.$$
 		\item There exists $c>0$ such that for all $n\in\bN$, $$\inf_{\alpha \in (0,1)^{E(G^{\os n})}} \min_{S \subsetneq V(G^{\os n}) \atop S \neq \emptyset} \frac{\Per_{\nu_G^{\os n},\sd_G}(S)}{\min\left\{\mu_{\alpha}(\nu_G^{\os n})(S),\mu_{\alpha}(\nu_G^{\os n})(S^c)\right\}^{\frac{\delta-1}{\delta}}}\ge c.$$
	\end{enumerate} 
Moreover, in both (1) and (3), $c$ can be taken to be $$\min \{ \Per_{\nu_G,\sd_G} (S)\colon \emptyset \neq S\subsetneq V(G)\}.$$
\end{theo}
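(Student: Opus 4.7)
I will prove the three equivalences via the cycle $(3)\Rightarrow(1)\Rightarrow(2)\Rightarrow(3)$, verifying the ``moreover'' constant $c := \min\{\Per_{\nu_G,\sd_G}(S) : \emptyset \ne S \subsetneq V(G)\}$ along the way. The implication $(3)\Rightarrow(1)$ is immediate by specialising $\alpha\equiv 1/2$. For $(2)\Rightarrow(3)$, I induct on $n$, proving $q_{G^{\os n}}\ge c$ at every stage: the base case $n=1$ uses only the trivial bound $\min\{\mu_\alpha(S),\mu_\alpha(S^c)\}^{(\delta-1)/\delta}\le 1$ (since the minimum is at most $1/2$ and the exponent lies in $[0,1]$) combined with $\Per_G(S)\ge c$; for the induction step I apply Theorem~\ref{thm:slash-isop} to $G^{\os n}\os G$ to obtain $q_{G^{\os(n+1)}}\ge\min\{\rho_{G^{\os n}}\cdot\tilde q_G^\circ,\; q_{G^{\os n}}\cdot p_G\}$, and each of the four factors is at least $c$ or $1$ (respectively) by (2), the multiplicativity $\rho_{G^{\os n}}=\rho_G^n$, the inductive hypothesis, and the same base-case bound restricted to sets disjoint from $\{s,t\}$.

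The direction $(1)\Rightarrow(2)$ is the main obstacle. I proceed by contrapositive: assuming $\rho_G<1$ or $p_G<1$, I construct $T_n\subset V(G^{\os n})$ with $q(T_n)\to 0$. When $\rho_G<1$, fix $e_0\in E(G)$ achieving $\rho_G(e_0)<1$ and a nonempty $S\subset V(G)\setminus\{s,t\}$ (which exists because $|V(G)|>2$), and set $T_n:=e_0^{\os(n-1)}\os S$. Lemma~\ref{lem:insideGformula} gives $\tilde q_{G^{\os n},1/2}(T_n)=\rho_G(e_0)^{n-1}\tilde q_{G,1/2}(S)\to 0$. The inequality $\rho_G(e_0)<1$ together with the normalization of $\sd_G$ (and $|E(G)|>1$) also yields $\nu_G(e_0)/\sd_G(e_0)<1$, from which $\Per(T_n)\to 0$ and $\mu(T_n)\to 0$; hence $\tilde q(T_n^c)\to 0$ as well, and $q(T_n)=\max\{\tilde q(T_n),\tilde q(T_n^c)\}\to 0$.

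The case $p_G<1$ with $\rho_G\ge 1$ holds the main difficulty. Fix $S_0$ realizing $p_G$ with (WLOG) $s\in S_0,\,t\notin S_0$. My primary construction is the \emph{iterative lift}: set $W_1:=S_0$ and, given $W_n\subset V(G^{\os n})$, label each edge $e\in E(G^{\os n})$ with $T_e\in\{V(G),\emptyset,S_0,S_0^c\}$ according to which of $e^-,e^+$ lies in $W_n$, then define $W_{n+1}:=\bigcup_e e\os T_e$. A consistency check at shared corners makes $W_{n+1}$ well-defined, and the perimeter decomposes cleanly over copies yielding the recursion $\Per(W_{n+1})=p_G\cdot\Per(W_n)$, so $\Per(W_n)=p_G^n\to 0$. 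The mass $\mu(W_n)$ satisfies an explicit linear recursion whose equilibrium lies in $(0,1)$ precisely when $S_0$ admits both an interior edge ($\nu(\{e:e^-,e^+\in S_0\})>0$) and an exterior edge, in which case $q(W_n)\le p_G^n/\min\{\mu_\infty,1-\mu_\infty\}^{(\delta-1)/\delta}\to 0$. In the degenerate case where every realizer of $p_G$ fails one of these conditions (as in $G=\sfP_3$), an elementary perimeter comparison ($p(S_0)=0$ forces $E_s\subseteq\partial_G S_0$) shows that $\{s\}$ or $\{t\}$ itself must realize $p_G$, so $P_s(G)=p_G$ or $P_t(G)=p_G$; I then fall back on the \emph{one-shot lift} $W_n:=e_0\os(V(G^{\os n-1})\setminus\{v^{n-1}\})$ for the appropriate $v\in\{s,t\}$ and an edge $e_0\in E_v(G)$ chosen so that its opposite endpoint has a unique neighbour via $e_0$ (this can be arranged by selecting a topologically extremal vertex), for which a direct boundary computation shows $\mu(W_n)\to\nu_G(e_0)\in(0,1)$ is stable while $\Per(W_n)$ decays like $p_G^{n-1}$, so $q(W_n)\to 0$.
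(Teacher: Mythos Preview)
Your cycle $(3)\Rightarrow(1)\Rightarrow(2)\Rightarrow(3)$, the induction via Theorem~\ref{thm:slash-isop} for $(2)\Rightarrow(3)$, and the contrapositive split of $(1)\Rightarrow(2)$ into the cases $\rho_G<1$ and $p_G<1$ match the paper exactly. Your iterative lift for the non-degenerate $p_G<1$ case is a mirror image of the paper's Case~2: you fill copies of $G$ over edges of $G^{\os n}$ according to $W_n$, whereas the paper fills copies of $G^{\os n}$ over edges of $G$ according to the fixed $S_1$. Both constructions give $\Per(W_n)=p_G^n$, and both bound the mass away from $0$ and $1$ simply because an interior edge $e_1$ of $S_0$ forces $e_1\os V(G^{\os n-1})\subset W_n$ (so $\mu(W_n)\ge\nu_G(e_1)$) and symmetrically for an exterior edge $e_2$. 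This direct inclusion argument is what the paper uses; your appeal to an ``explicit linear recursion whose equilibrium lies in $(0,1)$'' is vague and unnecessary.

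The gap is in your degenerate branch. Your reduction to $\Per_G(\{s\})=p_G$ (respectively $\{t\}$) when $S_0$ lacks an interior or exterior edge is correct and matches the paper's Case~1. But your one-shot lift $W_n=e_0\os(V(G^{\os n-1})\setminus\{s\})$ still contains $e_0^+$, and its boundary at $e_0^+$ includes edges $e'\os g$ with $e'\ne e_0$, $e'^+=e_0^+$ and $g^+=t(G^{\os n-1})$, whose contribution to the perimeter is a fixed constant times $\Per_G(\{t\})^{n-1}$; nothing forces this to vanish. Your remedy is to demand that $e_0^+$ have $e_0$ as its only incoming edge, and you justify existence of such $e_0$ by a ``topologically extremal vertex'' argument. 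That argument is only valid when $G$ is acyclic (then the out-neighbour of $s$ of minimal longest-path distance from $s$ has in-degree~$1$), and the definition of $s$-$t$ graph does not exclude directed cycles. The paper avoids the vertex removal entirely, taking $S_n=e_0\os V(G^{\os n-1})$ with $e_0^-=s$ and only the mild requirement that $e_0^+$ have an outgoing edge, which always holds since $|V(G)|>2$; you should either follow that route or supply a complete existence proof for your special $e_0$ in arbitrary $s$-$t$ graphs.
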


\begin{proof}
We retain the notational conventions from the previous subsection, e.g., $\Per_{G^{\os n}}$ means $\Per_{\nu_G^{\os n},\sd_G^{\os n}}$ and $\mu_{G^{\os n},\alpha}$ means $\mu_{\alpha}(\nu_G^{\os n})$.

The implication (3) $\implies$ (1) is immediate, and the implication (2) $\implies$ (3) holds by induction, using Theorem~\ref{thm:slash-isop}.
Indeed, let $$c \eqd \min \{ \Per_{\nu_G,\sd_G} (S)\colon \emptyset \neq S\subsetneq V(G)\}.$$  
Then clearly $q_{G}\ge c$. Moreover, $\tilde q^\circ_{G}\ge c$ and  assuming that $q_{G^{\os n}}\ge c$, for some $n\in\bN$,
we first observe that 
   $$ \rho_{G^{\os n}} \eqd \min_{e\in E(G^{\os n})} \frac{\nu_{G^{\os n}}^{\frac1\delta}(e)}{\sd_{G^{\os n}}(e)}  \stackrel{(\text{Rem. }\ref{rem:prod}) \land \eqref{eq:prod-measure}\land \eqref{eq:prod-metric}}{=} \min_{e_1,e_2,\ldots e_n\in E(G)} \frac{\prod_{j=1}^n\nu_G^{\frac1\delta}(e_j)}{\prod_{j=1}^n \sd_G(e_j)}\ge 1$$
and then  by  applying Theorem ~\ref{thm:slash-isop} to $H=G^{\os n}$ and $G$, we obtain from the induction hypothesis that
$$q_{G^{\os (n+1)}} \ge \min \{ \rho_{G^{\os n}} \cdot \tilde q^\circ_{G},q_{G^{\os n}}\cdot p_G\}\ge \min\{  \tilde q^\circ_{G},q_{G^{\os n}} \}\ge c. $$

It remains to prove that (1) implies (2), which we do by contraposition. 
Assume that (2) does not hold, so that $\rho_G < 1$ or $p_G < 1$. Assume first that $\rho_G < 1$. Let $e\in E(G)$ such that $\frac{\nu_G(e)^{\frac1\delta}}{\sd_G(e)}< 1$. Set $S := V(G)\setminus \{s(G),t(G)\} \not=\emptyset$, and $S_n := e^{\os n} \os S \subset V(G^{\os n}\os G)$ for $n\in\bN$. Since $\nu_G$ is fully supported and $E(G)$ has more than two edges, $\nu_G(e) < 1$. From this we get
	\begin{equation*}
		\mu_{G^{\os n+1}}(S_n) =\mu_{G^{\os n+1}}(e^{\os n} \os S) \overset{\eqref{eq:decomposition2}}{=} \nu_G^{\os n}(e^{\os n}) \mu_G(S) = \nu_G(e)^n \mu_G(S) \underset{n\to\infty}{\to} 0.
	\end{equation*}
Therefore, for $n$ sufficiently large, $\mu_{G^{\os n+1}}(S_n) \leq \frac{1}{2}$. Using this we get, for all $n$ sufficiently large,
\begin{align*}
    q_{G^{\os n+1},\frac12}(S_n) = \tilde{q}_{G^{\os n+1},\frac12}(S_n) \overset{\text{Lem }\ref{lem:insideGformula}}{=} \frac{\nu_G^{\os n}(e^{\os n} )^{\frac1\delta} }{\sd_{G^{\os n}}(e^{\os n})} \frac{\Per_G(S)}{\mu_G(S)^{\frac{\delta-1}\delta}}=  \Bigg(\frac{\nu_G(e)^{\frac1\delta}}{\sd_G(e)}\Bigg)^n \frac{\Per_G(S)}{\mu_{G}(S)^{\frac{\delta-1}\delta}} \underset{n\to\infty}{\to} 0.
\end{align*}
This shows (1) in the case $\rho_G < 1$.

Now assume that $p_G < 1$. Choose any $S\subset V(G)$ with $|S\cap\{s(G),t(G)\}|=1$ and
\begin{equation*}
    \Per_G(S)=\min \{ \Per_G(B)\colon B\subset V(G),  |B\cap\{s(G),t(G)\}|=1\}<1.
\end{equation*}
Without loss of generality we may assume that $s(G) \in S$ and $t(G) \not\in S$. The set $S$ must be connected, because otherwise the connected component of $S$ containing $s(G)$ would have strictly smaller perimeter. By the same reasoning, since $\Per_G(S^c)=\Per_G(S)$, $S^c$ must also be connected. We consider 2 cases: either $S$ or $S^c$ is a singleton, and neither $S$ nor $S^c$ is a singleton.

{\bf Case 1:} Either $S$ or $S^c$ is a singleton. \\
We will only treat the case that $S$ is a singleton, since the argument in the other case is identical. Then we have that $S = \{s(G)\}$, and hence by our convention of the orientation of an $s$-$t$ graph,
\begin{align} \label{eq:Per(s(G))<1}
    1 > \Per_G(\{s(G)\})=\sum_{e\in E(G) \atop e^-=s(G)} \frac{\nu_G(e)}{\sd_G(e)}.
\end{align}

Let $e_0\in E(G)$ such that $e_0^-=s(G)$ and such that there exists $e_1 \in E(G)$ with $e_1^- = e_0^+$ (such an edge $e_0$ must exist since $V(G)$ has more than $2$ elements). We define, for $n\in\bN$, $n\ge 2$,
\begin{equation*}
    S_n := e_0 \os V(G^{\os n-1}) \subset V(G\os G^{\os (n-1)})=V(G^{\os n}).
\end{equation*}
It holds that
\begin{align*}
        \partial_{G^{\os n}} (S_n)= \{&f_1\os f_2\os\cdots\os f_{n} \in E(G^{\os n}): f_1^-=e_0^+, f_j^-=s(G) \text{ for $2 \leq j \leq n$} \}\\
        &\cup \{f_1\os f_2\os\cdots\os f_{n}\in E(G^{\os n}): f_1\not= e_0, f_j^-=s(G) \text{ for $1 \leq j\leq n$} \}.
\end{align*}
Note that at least the first of above two sets cannot be empty by choice of $e_0$. It follows that 
	\begin{align*}
		\Per_{G^{\os  n}}  (S_n) &= \left(\sum_{e\in E(G) \atop e^-=e^+_0} \frac{\nu_G(e)}{\sd_G(e)} \right)\left(\sum_{f\in E(G) \atop f^-=s(G)} \frac{\nu_G(f)}{\sd_G(f)}\right)^{n-1} + \left(\sum_{e\in E(G)\setminus\{e_0\}\atop e^-=s(G)} \frac{\nu_G(e)}{\sd_G(e)}\right) \left(\sum_{f\in E(G) \atop f^-=s(G)} \frac{\nu_G(f)}{\sd_G(f)}\right)^{n-1}\\
		&=\left(\sum_{e\in E(G)\setminus\{e_0\}\atop e^-=s(G)} \frac{\nu_G(e)}{\sd_G(e)}+\sum_{e\in E(G) \atop e^-=e^+_0} \frac{\nu_G(e)}{\sd_G(e)}  \right) \left(\sum_{f\in E(G) \atop f^-=s(G)} \frac{\nu_G(f)}{\sd_G(f)}\right)^{n-1} \\
		&=\left(\sum_{e\in E(G)\setminus\{e_0\}\atop e^-=s(G)} \frac{\nu_G(e)}{\sd_G(e)}+\sum_{e\in E(G) \atop e^-=e^+_0} \frac{\nu_G(e)}{\sd_G(e)}  \right) \Per_G(\{s(G)\})^{n-1} \overset{\eqref{eq:Per(s(G))<1}}{\underset{n\to\infty}{\to}} 0.
	\end{align*} 
Thus, the proof that (1) is not satisfied, is complete in this case if we can verify that
	\begin{equation*}
		\inf_{n\in\bN} \min\{\mu_{G^{\os n}}(S_n),\mu_{G^{\os n}}(S_n^c)\}>0.
	\end{equation*}
First note that
	\begin{equation*}
		\mu_{G^{\os n}}(S_n)=\mu_{G^{\os n}}(e_0 \os V(G^{\os n-1}))  \overset{\eqref{eq:decomposition2}}{=} \nu_G(e_0)\mu_{G^{\os(n-1)}}( V(G^{\os n-1})) = \nu_G(e_0)\ge \min_{e\in E(G)} \nu_G(e)>0.
	\end{equation*}
Secondly, there must exist $e_2\in E(G)$ with $e_2^+=t(G) $ and $e_2^- \neq s(G)$, from which it follows that $e_2\os e_2 \os V(G^{\os n-2})$ is a subset of $V(G\os G\os G^{\os (n-2)})=V(G^{\os n})$ disjoint from $S_n$, and thus
	\begin{align*}
		\mu_{G^{\os n}}(S_n^c) \ge \mu_{G^{\os n}}(e_2\os e_2 \os V(G^{\os n-2})) \overset{\eqref{eq:decomposition2}}{=} \nu_G^{\os 2} (e_2\os e_2) \ge \min_{e\in E(G)} \nu_G(e)^2>0.
	\end{align*}

{\bf Case 2:} $\min\{ |S|, |S^c|\} \geq 2$. \\
Since $S$ and $S^c$ are connected, there exist $e_1,e_2\in E(G)$ such that $e_1^-,e_1^+\in S$ and $e_2^-,e_2^+\in S^c$. We define $S_n\subset V(G^{\os n})$ recursively for all $n\in\bN$. Let  $S_1:=S$ and
	\begin{equation*}
		S_{n+1}:= \bigcup_{e\in E(G)} e\os S'_{n,e}\subset V(G\os G^{\os n}),
	\end{equation*}
where 
	\begin{align*}
		S'_{n,e}:=
			\begin{cases}  V(G^{\os n}) & e^-,e^+\in S \\
				\emptyset & e^-,e^+\not\in S \\
				S_n & e^-\in S, e^+\not\in S \\
				S_n^c & e^+\in S, e^-\notin S 
			\end{cases}.
	\end{align*}
In particular we have $S'_{n,e_1}= V(G^{\os n})$ and $S'_{n,e_2}= \emptyset$.
For all $n\in\bN$, we have
$$\partial_{G^{\os (n+1)}}(S_{n+1})= \bigcup_{e\in\partial_G S} e\oslash \partial_{G^{\os n}} S'_{n,e},$$
and thus 
	\begin{align*}
		\Per_{G^{\os n+1}} (S_{n+1}) = \sum_{e\in\partial_G (S)} \frac{\nu_G(e)}{\sd_G(e)} \Per_{G^{\os n} }(S'_{n,e})= 
\sum_{e\in\partial_G (S)} \frac{\nu_G(e)}{\sd_G(e)} \Per_{G^{\os n} }(S_{n}) = \Per_G(S)\Per_{G^{\os n} }(S_{n}),
	\end{align*}
from which we deduce by induction that
	\begin{equation*}
		\Per_{G^{\os n}}(S_{n}) = \Per_G(S)^n \underset{n\to\infty}{\to} 0.
	\end{equation*}
On the other hand,
	\begin{equation*}
		\mu_{G^{\os n}}(S_n) = \mu_{G^{\os n}}\Big(\bigcup_{e\in E(G)} e\os S'_{n-1,e} \Big) \ge  \mu_{G^{\os n}}(e_1\os S'_{n-1,e_1}) = \mu_{G^{\os n}}(e_1\os V(G^{\os (n-1)})) \stackrel{ \eqref{eq:decomposition2}}{=} \nu_G(e_1),
	\end{equation*}
and observing that $S_{n}^c = \bigcup_{e\in E(G)} e\os [S'_{n,e}]^c$, a similar argument shows that $\mu_{G^{\os n}}(S_n^c)\ge \nu(e_2)$. Combining these last three estimates yields
	\begin{equation*}
		q_{G^{\os n},\frac{1}{2}}(S_n) \underset{n\to\infty}{\to} 0,
	\end{equation*}
proving the negation of (1).
\end{proof}

We immediately get the next corollary, which characterizes the isoperimetric dimension of $G^{\os n}$ in terms of easily verifiable conditions on $G$.

\begin{coro} \label{cor:slash-isop}
If an $s$-$t$ graph $G$ satisfies $|V(G)| > 2$, then the following are equivalent:
\begin{enumerate}
    \item For all $n \in \bN$, $G^{\os n}$ has $(\mu(\nu_G^{\os n}),\nu_G^{\os n},\sd_G^{\os n})$-isoperimetric dimension $\displaystyle \delta = \max_{e\in E(G)}\frac{\log (\nu_G(e))}{\log(\sd_G(e))}$ with constant $\displaystyle C \leq \max_{S\subsetneq V(G) \atop S\neq \emptyset}\Per_{\nu_G,\sd_G}(S)^{-1}$.
    \item There exist $\delta \in [1,\infty)$ and $C\in (0,\infty)$ such that, for all $n \in \bN$, $G^{\os n}$ has $(\mu(\nu_G^{\os n}),\nu_G^{\os n},\sd_G^{\os n})$-isoperimetric dimension $\delta$ with constant $C$.
    \item $\displaystyle \min_{S\subset V(G) \atop |S\cap \{s(G),t(G)\}|=1}\Per_{\nu_G,\sd_G}(S)\ge 1$.
\end{enumerate}
\end{coro}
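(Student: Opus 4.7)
The plan is to deduce the corollary directly from Theorem~\ref{theo:isoper-inequality-for-slashpowers}. The key observation is that the quantity $p_G$ appearing in hypothesis (2) of that theorem depends only on $G$, whereas $\rho_G$ depends on the exponent $\delta$ and is monotone nondecreasing in it. Thus condition~(3) of the corollary is literally ``$p_G\ge 1$'', while condition~(2) amounts to the existence of some $\delta$ with both $p_G\ge 1$ and $\rho_G\ge 1$, and condition~(1) simply pins down the optimal such $\delta$ together with the sharp constant.

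The implication $(1)\Rightarrow(2)$ is tautological. For $(2)\Rightarrow(3)$: if $(\delta,C)$ witnesses (2), then with $c:=1/C$ condition (1) of Theorem~\ref{theo:isoper-inequality-for-slashpowers} is met for this $\delta$, and the implication $(1)\Rightarrow(2)$ of that theorem yields $p_G\ge 1$, which is exactly condition (3).

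For the main implication $(3)\Rightarrow(1)$, assume $p_G\ge 1$ and set
\[
    \delta := \max_{e\in E(G)} \frac{\log\nu_G(e)}{\log\sd_G(e)}.
\]
I would first check that $\rho_G\ge 1$ for this choice by a direct calculation: $\rho_G\ge 1$ is equivalent to $\nu_G(e)^{1/\delta}\ge\sd_G(e)$ for every $e\in E(G)$, and taking logarithms (both $\nu_G(e)$ and $\sd_G(e)$ lie strictly below $1$ under our standing conventions on $G$, using $|V(G)|>2$ to force $|E(G)|\ge 2$) and dividing by the negative number $\log\sd_G(e)$ flips the inequality into exactly the defining relation $\delta\ge\log\nu_G(e)/\log\sd_G(e)$. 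With both $\rho_G\ge 1$ and $p_G\ge 1$ in hand, hypothesis~(2) of Theorem~\ref{theo:isoper-inequality-for-slashpowers} is satisfied, so its conclusion~(1) applies and yields the uniform isoperimetric inequality with $c=\min\{\Per_{\nu_G,\sd_G}(S):\emptyset\ne S\subsetneq V(G)\}$. Rewriting the isoperimetric ratio bound $q(S)\ge c$ as a dimension constant $C\le 1/c=\max_{\emptyset\ne S\subsetneq V(G)}\Per_{\nu_G,\sd_G}(S)^{-1}$ gives exactly statement~(1).

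There is no real obstacle, since Theorem~\ref{theo:isoper-inequality-for-slashpowers} does all the work; the corollary is essentially a bookkeeping exercise that trades the pair $(\rho_G,p_G)$ from the theorem for the pair $(\delta,C)$ in the language of isoperimetric dimension. The one small point to note is the well-definedness of the exponent $\delta$, which requires $\sd_G(e)<1$ for every edge so that $\log\sd_G(e)\ne 0$; this holds automatically for the graphs of interest in this paper (path graphs, diamond graphs, and their $\os$-powers), whose edges all carry length $1/k<1$.
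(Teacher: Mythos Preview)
Your argument is correct and matches the paper's approach: the paper states the corollary as an immediate consequence of Theorem~\ref{theo:isoper-inequality-for-slashpowers} without giving details, and your write-up is exactly the bookkeeping needed to spell out that deduction. Your observation that $p_G$ is $\delta$-independent while $\rho_G$ is monotone in $\delta$ is the right way to frame the translation between the theorem's conditions and the corollary's, and your remark about needing $\sd_G(e)<1$ to make the formula for $\delta$ well-defined is a valid caveat that the paper leaves implicit.
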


\subsection{Applications}
In this section we show how to apply the results in Section \ref{sec:isoperimetric-inequalities} to two important sequences of graphs.

\subsubsection{Isoperimetric dimensions of diamond graphs}
Let $k,m \geq 2$ be integers. Recall from Example~\ref{ex:diamonds} that the $m$-branching diamond graph of depth $k$, $\sD_{k,m}$, is equipped with $\nu_{\sD_{k,m}}$ the uniform probability measure on $E(\sD_{k,m})$ and $\sd_{\sD_{k,m}}$ the normalized geodesic metric on $V(\sD_{k,m})$. It can be easily verified that $\Per_{\nu_{\sD_{k,m}},\sd_{\sD_{k,m}}}(S) \geq 1$ for every $S \subset V(\sD_{k,m})$ with $|S \cap \{s(\sD_{k,m}),t(\sD_{k,m})\}| = 1$. Indeed, by symmetry and the fact that connected components of $S$ have smaller perimeter than $S$, it suffices to check the inequality assuming that $S$ is connected, $s(\sD_{k,m}) \in S$, and $t(\sD_{k,m}) \not\in S$. It is clear that any such set $S$ must be a union of directed paths $\{P_i\}_{i=1}^j$, $1 \leq j \leq m$, starting at the common vertex $s(\sD_{k,m})$ and ending at non-neighboring vertices. It is easily seen that $\Per_{\nu_{\sD_{k,m}},\sd_{\sD_{k,m}}}(S) = 1$ in this case. It is also clear that $\displaystyle \max_{\emptyset \neq S\subsetneq V(G)}\Per_{\nu_G,\sd_G}(S)^{-1} \leq \frac{m}{2}$, and thus by Corollary~\ref{cor:slash-isop}, we get that:

\begin{coro} \label{ex:isop-diamonds}
For all, $k,m\ge 2$ and all $n\in \bN$, $\sD_{k,m}^{\os n}$ has $(\mu(\nu_{\sD_{k,m}}^{\os n}),\nu_{\sD_{k,m}}^{\os n},\sd_{\sD_{k,m}}^{\os n})$-isoperimetric dimension $1+\frac{\log m}{\log k}$ with constant $C \leq \frac{m}{2}$. In particular, the classical binary diamond graph $\dia_n$ has
 $(\mu(\nu_{ \sD_{1}^{\os n} }), \nu_{ \sD_{1}^{\os n} }, \sd_{ \sD_{1}^{\os n} } )$-isoperimetric dimension $2$ with constant $C \leq 1$ 
\end{coro}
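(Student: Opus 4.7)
The plan is to verify condition (3) of Corollary~\ref{cor:slash-isop} for $G=\sD_{k,m}$ equipped with the uniform probability measure $\nu_G$ and the normalized geodesic metric $\sd_G$, and then read off both the dimension and the upper bound on the constant from condition (1) of the same corollary. Since $|V(\sD_{k,m})|>2$, the corollary is applicable.

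To verify condition (3), I would pick an arbitrary $S\subset V(\sD_{k,m})$ with $|S\cap\{s,t\}|=1$, where $s=s(\sD_{k,m})$ and $t=t(\sD_{k,m})$, and use $\Per_{\nu_G,\sd_G}(S)=\Per_{\nu_G,\sd_G}(S^c)$ to reduce to the case $s\in S$, $t\notin S$. Since the edge-boundaries of the distinct connected components of $S$ are pairwise disjoint, the perimeter is additive across components, so it suffices to show $\Per_{\nu_G,\sd_G}(S')\geq 1$ for the connected component $S'\subset S$ containing $s$. Such an $S'$ must be of the form $\{s\}\cup\bigcup_{i=1}^{m}Q_i$, where $Q_i=\{v_{i,1},\dots,v_{i,\ell_i}\}$ is a directed prefix of length $\ell_i\in\{0,1,\dots,k-1\}$ of the $i$-th branch $s=v_{i,0},v_{i,1},\dots,v_{i,k}=t$ of $\sD_{k,m}$. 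A direct inspection shows that $\partial_G S'$ contains exactly one edge per branch, namely $(v_{i,\ell_i},v_{i,\ell_i+1})$, each contributing $\frac{\nu_G(e)}{\sd_G(e)}=\frac{1/(km)}{1/k}=\frac{1}{m}$. Therefore $\Per_{\nu_G,\sd_G}(S')=m\cdot\frac{1}{m}=1$, establishing condition (3).

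With (3) in hand, condition (1) of Corollary~\ref{cor:slash-isop} delivers the dimension
\[
 \delta=\max_{e\in E(\sD_{k,m})}\frac{\log\nu_G(e)}{\log\sd_G(e)}=\frac{\log(km)}{\log k}=1+\frac{\log m}{\log k},
\]
since every edge of $\sD_{k,m}$ satisfies $\nu_G(e)=\tfrac{1}{km}$ and $\sd_G(e)=\tfrac{1}{k}$. For the constant, the corollary gives $C\leq\bigl(\min_{\emptyset\neq S\subsetneq V(\sD_{k,m})}\Per_{\nu_G,\sd_G}(S)\bigr)^{-1}$. I expect this minimum to equal $\frac{2}{m}$, attained on any singleton $\{v_{i,j}\}$ with $1\leq j\leq k-1$ (two boundary edges of weight $\frac{1}{m}$ each). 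A brief case check will confirm the matching lower bound: the singletons $\{s\},\{t\}$ have perimeter $1$, and for any other nonempty proper $S$ either the connected component of $S$ containing $s$ or $t$ contributes perimeter $\geq 1$, or, if $S$ is disjoint from $\{s,t\}$, then $S$ meets the extended $s$-$t$ path along some branch in a nonempty proper subset, which necessarily carries at least two boundary edges. Hence $C\leq\frac{m}{2}$. Specializing to $k=m=2$ yields $\delta=2$ and $C\leq 1$, recovering the classical binary diamond case.

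There is no significant obstacle: the argument is essentially a bookkeeping exercise built on the combinatorial structure of $\sD_{k,m}$ as $m$ copies of $\Pk$ glued at their endpoints. The only point that deserves some care is the explicit description of the connected subsets of $\sD_{k,m}$ separating $s$ from $t$, together with the perimeter minimization that determines the constant $C$.
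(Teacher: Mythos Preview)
Your approach is correct and matches the paper's: verify condition (3) of Corollary~\ref{cor:slash-isop} by reducing to connected $S$ with $s\in S$, $t\notin S$, identify these as unions of branch-prefixes with perimeter exactly $1$, then read off $\delta$ and $C$ from condition (1). Your case analysis for the constant bound (where the paper simply asserts the bound as ``clear'') omits the case $\{s,t\}\subset S$, but this is immediately handled by passing to $S^c$, which is then nonempty and disjoint from $\{s,t\}$ and falls under your final case.
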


\subsubsection{Isoperimetric dimensions of Laakso graphs}
Let $\La_1$ denote the level 1 Laakso graph (originally studied by Lang  and Plaut \cite[Theorem~2.3]{LangPlaut01}) depicted in Figure~\ref{fig:Laakso}. We give labels to the vertices as $V(\La_1) = \{s(\La_1) = u_0, u_{1/4}, u_{1/2+}, u_{1/2-}, u_{3/4}, u_1=t(\La_1)\}$ so that the edge set is 
$$E(\La_1) = \{(u_0,u_{1/4}), (u_{1/4},u_{1/2+}), (u_{1/4},u_{1/2-}), (u_{1/2+},u_{3/4}), (u_{1/2-},u_{3/4}), (u_{3/4},u_1)\}.$$ 
\begin{figure}[H]
    \centering
   \includegraphics[trim=200 325 175 150,clip,scale=.5]{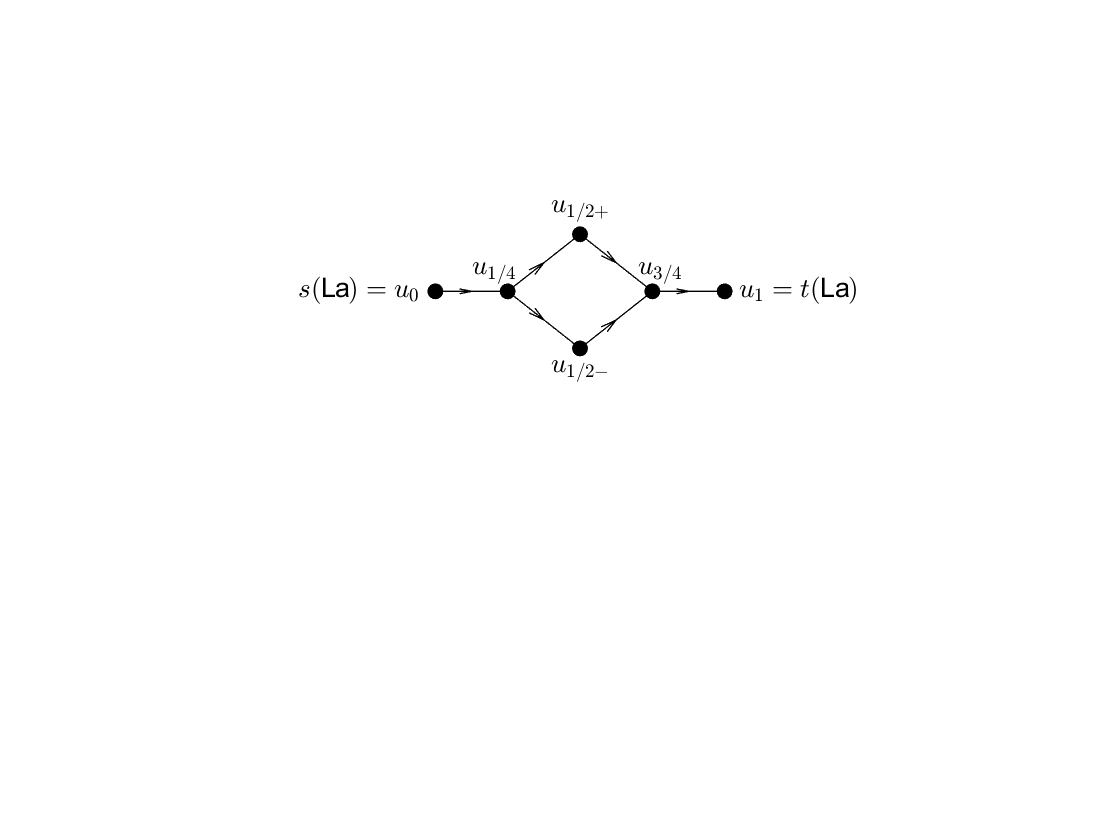}
       \caption{The Laakso graph $\La_1$.}
    \label{fig:Laakso}
\end{figure}
Equip $V(\La_1)$ with the normalized geodesic metric $\sd_{\La_1}(e) := \frac{1}{4}$ for every $e \in E(\La_1)$. If $\nu_{\La_1,u}$ is the uniform probability measure on $E(\La_1)$, then $\Per_{\nu_{\La_1,u}, \sd_{\La_1}}(\{s(\La_1)\}) = \frac23$ and thus by Corollary~\ref{cor:slash-isop}, there is no $\delta < \infty$ such that $\La_n:=\La_1^{\os n}$ has $(\mu(\nu_{\La_1,u}^{\os n}), \nu_{\La_1,u}^{\os n}, \sd_{\La_1}^{\os n})$-isoperimetric dimension $\delta$ with a fixed constant $C\in(0,\infty)$.
However, if $\nu_{\La_1,p}$ is the probability measure on  $E(\La_1)$ defined by $\nu_{\La_1,p}(e) := \frac{1}{4}$ if $e \in \{(u_0,u_{1/4}), (u_{3/4},u_{1})\}$ and $\nu_{\La_1,p}(e) := \frac{1}{8}$ otherwise, then it is easy to check that $\Per_{\nu_{\La_1,p}, \sd_{\La_1}}(S)\ge 1$ for every $\emptyset \neq S\subsetneq V(\La_1)$. Therefore, since $\frac{\log(1/8)}{\log(1/4)}=\frac32$, Corollary~\ref{cor:slash-isop} gives:

\begin{coro}
There is $C<\infty$ such that, for every $n\in \bN$, $\sL_{n}$ has $(\mu(\nu_{\La_1,p}^{\os n}), \nu_{\La_1,p}^{\os n}, \sd_{\La_1}^{\os n})$-isoperimetric dimension $\frac32$ with constant $C<\infty$.
\end{coro}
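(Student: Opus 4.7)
The plan is to apply Corollary~\ref{cor:slash-isop} directly to the $s$-$t$ graph $\La_1$ equipped with $\sd_{\La_1}$ and $\nu_{\La_1,p}$. Since $|V(\La_1)|=6>2$, the corollary applies, so it suffices to verify its condition (3), namely that
\[
   \Per_{\nu_{\La_1,p},\sd_{\La_1}}(S)\ge 1 \quad \text{for every } S\subset V(\La_1) \text{ with } |S\cap\{s(\La_1),t(\La_1)\}|=1.
\]
This will automatically yield the isoperimetric dimension $\delta=\max_{e\in E(\La_1)}\frac{\log\nu_{\La_1,p}(e)}{\log\sd_{\La_1}(e)}$ with a uniform constant $C<\infty$ for every $\La_n=\La_1^{\os n}$.

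First I would compute $\delta$. Every edge $e$ satisfies $\sd_{\La_1}(e)=\tfrac14$, while $\nu_{\La_1,p}(e)\in\{\tfrac14,\tfrac18\}$. The ratio $\frac{\log\nu_{\La_1,p}(e)}{\log\sd_{\La_1}(e)}$ therefore equals $1$ on the two extremal edges $(u_0,u_{1/4})$ and $(u_{3/4},u_{1})$, and equals $\frac{\log 8}{\log 4}=\frac32$ on the four middle edges. The maximum, hence $\delta$, is $\frac32$, matching the claim.

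Next I would verify the perimeter lower bound. The useful bookkeeping is that $\nu_{\La_1,p}(e)/\sd_{\La_1}(e)$ equals $1$ on each extremal edge and $\tfrac12$ on each middle edge. By the symmetry $u_i\leftrightarrow u_{1-i}$ it suffices to treat $S$ with $s(\La_1)\in S$ and $t(\La_1)\notin S$; and since replacing $S$ by its connected component containing $s(\La_1)$ only decreases the perimeter (an observation formalized in Proposition~\ref{prop:connected-components}), it is enough to consider connected $S$. This leaves an explicit short list $\{u_0\}$, $\{u_0,u_{1/4}\}$, $\{u_0,u_{1/4},u_{1/2+}\}$, $\{u_0,u_{1/4},u_{1/2-}\}$, $\{u_0,u_{1/4},u_{1/2+},u_{1/2-}\}$, $\{u_0,u_{1/4},u_{1/2+},u_{3/4}\}$, $\{u_0,u_{1/4},u_{1/2-},u_{3/4}\}$, $\{u_0,u_{1/4},u_{1/2+},u_{1/2-},u_{3/4}\}$. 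Direct inspection shows that in each case the boundary either contains an extremal edge (contribution $1$) or contains at least two middle edges (total contribution $\ge 1$), so the bound $\Per_{\nu_{\La_1,p},\sd_{\La_1}}(S)\ge 1$ holds throughout.

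Combining these two computations with Corollary~\ref{cor:slash-isop} produces the isoperimetric dimension $\frac32$ with a uniform constant $C$ (one may in fact take $C\le 1$, since the minimum perimeter of a nontrivial subset of $V(\La_1)$ equals $1$). There is no genuine obstacle here: the heavy analytic lifting is already done in Theorem~\ref{thm:slash-isop}, and all that remains is the finite case check on the $6$-vertex graph $\La_1$. The only subtlety to be careful about is that the \emph{uniform} measure $\nu_{\La_1,u}$ fails the condition (since $\Per(\{s(\La_1)\})=\tfrac23<1$), which is precisely why the reweighted measure $\nu_{\La_1,p}$---placing extra mass on the two extremal edges---is used.
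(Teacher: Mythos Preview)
Your proposal is correct and follows essentially the same approach as the paper: apply Corollary~\ref{cor:slash-isop} to $\La_1$, compute $\delta=\max_e\frac{\log\nu_{\La_1,p}(e)}{\log\sd_{\La_1}(e)}=\frac32$, and verify the perimeter condition (3). The paper is simply terser, asserting without details that $\Per_{\nu_{\La_1,p},\sd_{\La_1}}(S)\ge 1$ for every $\emptyset\neq S\subsetneq V(\La_1)$, whereas you spell out the reduction to connected sets containing $s(\La_1)$ and the explicit case check; note only that your citation of Proposition~\ref{prop:connected-components} is slightly off (that proposition bounds the ratio $q$, not the perimeter itself), though the elementary fact you actually use---that $\Per(S)=\sum_j\Per(S_j)$ over connected components---appears in its proof.
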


\section{Lipschitz-spectral profile of $\os$-products and $\os$-powers}
\label{sec:Lip-spec}

The main goal of this section is to compute the Lipschitz-spectral profile of $\os$-powers of $s$-$t$ graphs $G$ when $E(G)$ is equipped with the uniform probability measure (Corollary \ref{cor:spec-slashpower}). This result will be obtained as a particular case of a more general study of the Lipschitz-spectral profile of $\os$-products (Theorems \ref{thm:stronglyorthog},\ref{thm:Lpnorms},\ref{thm:Lipgrowth}). Throughout this section, fix an integer $k \ge 2$.

\begin{rema}
We remark that none of the results of this section require the vertices in an $s$-$t$ graph $G$ to lie on a directed edge path from $s(G)$ to $t(G)$; the results apply to more general graphs.
\end{rema}

\subsection{Operators between function spaces}
We introduce various operators between function spaces that we use to build orthogonal sets of Lipschitz functions on $\os$-products. The first two operators are $\os$-products and barycentric extensions of functions. These operators are defined whenever the relevant graphs are $s$-$t$.

\begin{defi}[$\os$-products of functions]
Given a graph $H$, $s$-$t$ graph $G$, and functions $h: E(H) \to \bR$, $g_1: V(G) \to \bR$, $g_2: E(G) \to \bR$ with $g_1(s(G)) = g_1(t(G)) = 0$, we define $h \os g_1: V(H \os G) \to \bR$ and $h \os g_2: E(H \os G) \to \bR$ by $(h \os g_1)(e \os u) := h(e) \cdot g_1(u)$ and $(h \os g_2)(e \os e') := h(e) \cdot g_2(e')$. Note that $h \os g_1$ is well-defined because $g_1(s(G)) = g_1(t(G)) = 0$. 
\end{defi}

Given a real-valued function $f$ on $V(H)$, a \emph{barycentric extension} operator will return a function on $V(H \os \Pk)$ by taking a natural barycentric combination of the values of $f$ at the two corresponding vertices of $H$ where each copy of $\Pk$ is attached.

\begin{defi}[Barycentric extension]
Given a graph $H$ and function $f \colon V(H) \to \bR$, we define its \emph{barycentric extension} $\Lin(f) \colon V(H \os \Pk) \to \bR$ by
	\begin{equation*}
		\Lin(f)(u) := (1-\tfrac{i}{k})f(e^-) +\tfrac{i}{k}f(e^+)
	\end{equation*}
for all $u = e \os \tfrac{i}{k} \in V(H \os \Pk)$.
\end{defi}

The next two operators, pullbacks and conditional expectations, require a graph morphism $\theta: V(G) \to V(G')$ and a measure $\mu_G$ on $V(G)$ rather than an $s$-$t$ structure.

Let $G,H$ be graphs and $\theta: V(G) \to V(G')$ a graph morphism. We define $\sigma(\theta)$ to be the $\sigma$-algebra on $V(G)$ or $E(G)$ generated by $\theta$. That is, the atoms of $\sigma(\theta)$ are preimages of singleton subsets of $V(G')$ or $E(G')$ under $\theta$, and $\sigma(\theta)$ is generated by these atoms.

\begin{defi}[Pullbacks induced by graph morphisms]
Let $G,H$ be graphs and $\theta: V(G) \to V(G')$ a graph morphism. For a given function $f \in \bR^{V(G')}$, we define its \emph{pullback} by $\theta^*(f) := f \circ \theta \in \bR^{V(G)}$. Since $\theta$ is a graph morphism, it induces a well-defined map $\theta: E(G) \to E(G')$, and thus we get a pullback operator $\theta^*: \bR^{E(G')} \to \bR^{E(G)}$ given by the same formula.
\end{defi}

\begin{rema}
A function on $V(G)$ or $E(G)$ is $\sigma(\theta)$-measurable if and only if it is in the image of $\theta^*$.
\end{rema}

\begin{defi}[Conditional expectations induced by graph morphisms]
Let $G,G'$ be graphs, $\theta: V(G) \to V(G')$ a graph morphism, and $\nu_G$ a measure on $E(G)$. We define $\bE_{\nu_G}^\theta$ to be the conditional expectation with respect to the measure space $(E(G),\sigma(\theta),\nu_G)$. That is, for every $g: E(G) \to \bR$, $\bE_{\nu_G}^\theta(g): E(G) \to \bR$ is $\sigma(\theta)$-measurable and satisfies
\begin{equation*}
    \int_{E(G)} h\cdot\bE_{\nu_G}^\theta(g)d\nu_G = \int_{E(G)} h\cdot gd\nu_G
\end{equation*}
for every $\sigma(\theta)$-measurable $h: E(G) \to \bR$.
\end{defi}

\subsection{Strongly orthogonal sets of Lipschitz functions on $\os$-products}
The following definition is the crucial strengthening of orthogonality needed to study Lipschitz-spectral profile of $\os$-products.

\begin{defi}[Strong orthogonality]
When $H$ is a graph and $f: V(H) \to \bR$ is a function, we define the \emph{induced edge-functions} $f_-,f_+: E(H) \to \bR$ by $f_-(e) := f(e^-)$ and $f_+(e) := f(e^+)$. For $\nu_H$ a measure on $E(H)$ and $f,g: V(H) \to \bR$, we say that $f,g$ are \emph{strongly $\nu_H$-orthogonal} if $f_{\eps_1},g_{\eps_2}$ are orthogonal in $L_2(E(H),\nu_H)$ for all $\eps_1,\eps_2 \in \{-,+\}$. We say that a set of functions $F \subset \bR^{V(H)}$ is strongly $\nu_H$-orthogonal if $f,g$ are strongly $\nu_H$-orthogonal for all $f \neq g \in F$.
\end{defi}

\begin{rema} \label{rem:strongorthog->orthog}
It follows easily from \eqref{eq:induced-integral} that strong $\nu_H$-orthogonality of $f,g: V(H) \to \bR$ implies orthogonality of $f,g$ in $L_2(V(H),\mu(\nu_H))$, and this is all that is needed as far as Lipschitz-spectral is concerned. However, for our inductive argument to close in the proof of Theorem~\ref{thm:stronglyorthog}, we need to consider strongly orthogonal sets of functions.
\end{rema}

The main goal of this subsection is to extend a given strongly $\nu_H$-orthogonal set of functions on $V(H)$ to a strongly $\nu_H \os \nu_G$-orthogonal set of functions on $V(H \os G)$ with control on the $L_1$, $L_\infty$, and Lipschitz norms of the functions. To do this, we must work with a special class of graphs $G$.

For $G$ an $s$-$t$ graph, a graph morphism $\pi: V(G) \to V(\Pk)$ is called a \emph{$\Pk$-collapsing map} if $\pi^{-1}(\{0\}) = \{s(G)\}$ and $\pi^{-1}(\{1\}) = \{t(G)\}$.

\begin{exam}[$\Pk$-collapsing map for diamonds] \label{ex:pidiamonds}
Let $k,m \geq 2$ be integers. Recall from Example~\ref{ex:diamonds} the diamond graph $\dia_{k,m}$ with vertex set $V(\dia_{k,m}) := V(\Pk) \times \{1,\dots m\} / \sim$, where $(u,i) \sim (v,j)$ if and only if $(u,i) = (v,j)$, $u = v = 0$, or $u = v = 1$. The map $\pi: V(\dia_{k,m})  \to V(\Pk)$ defined by $\pi([(u,i)]) := u$ is a $\Pk$-collapsing map. See Figure \ref{fig:pidiamonds}.
\end{exam}

\begin{figure}[H]
    \centering
   \includegraphics[trim=250 200 200 100,clip,scale=.425]{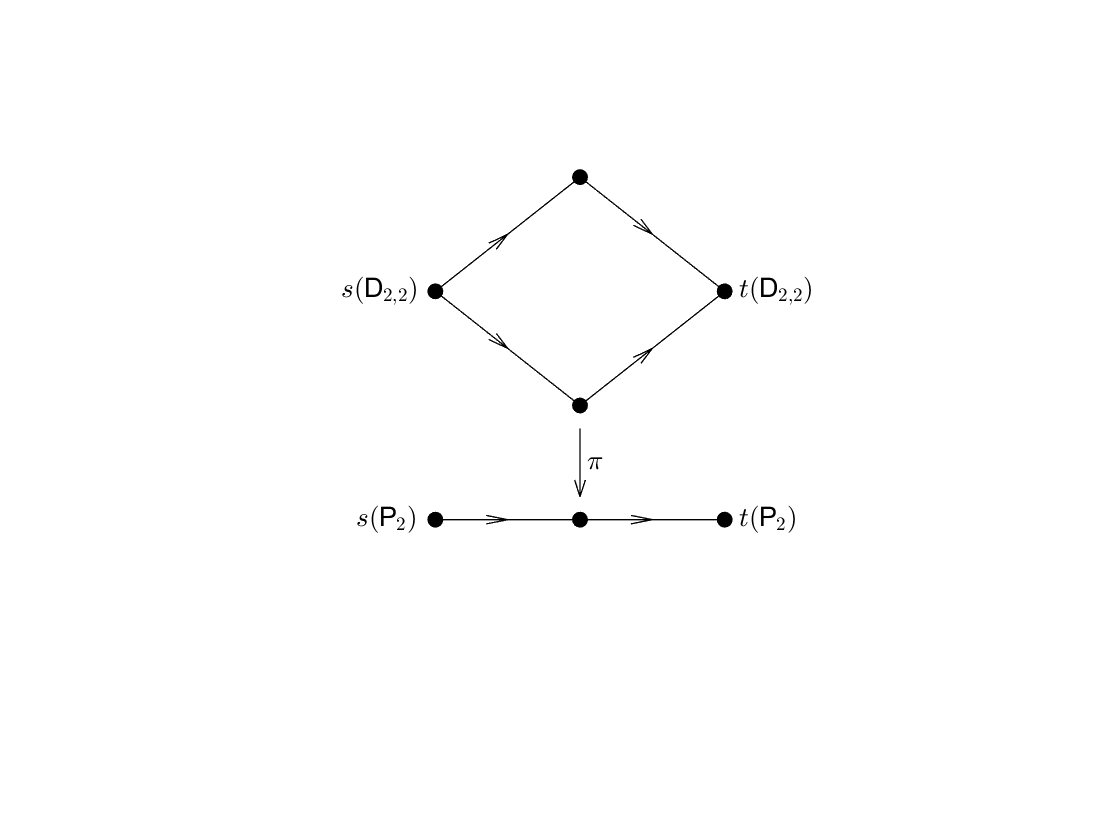}
   \includegraphics[trim=200 200 200 100,clip,scale=.425]{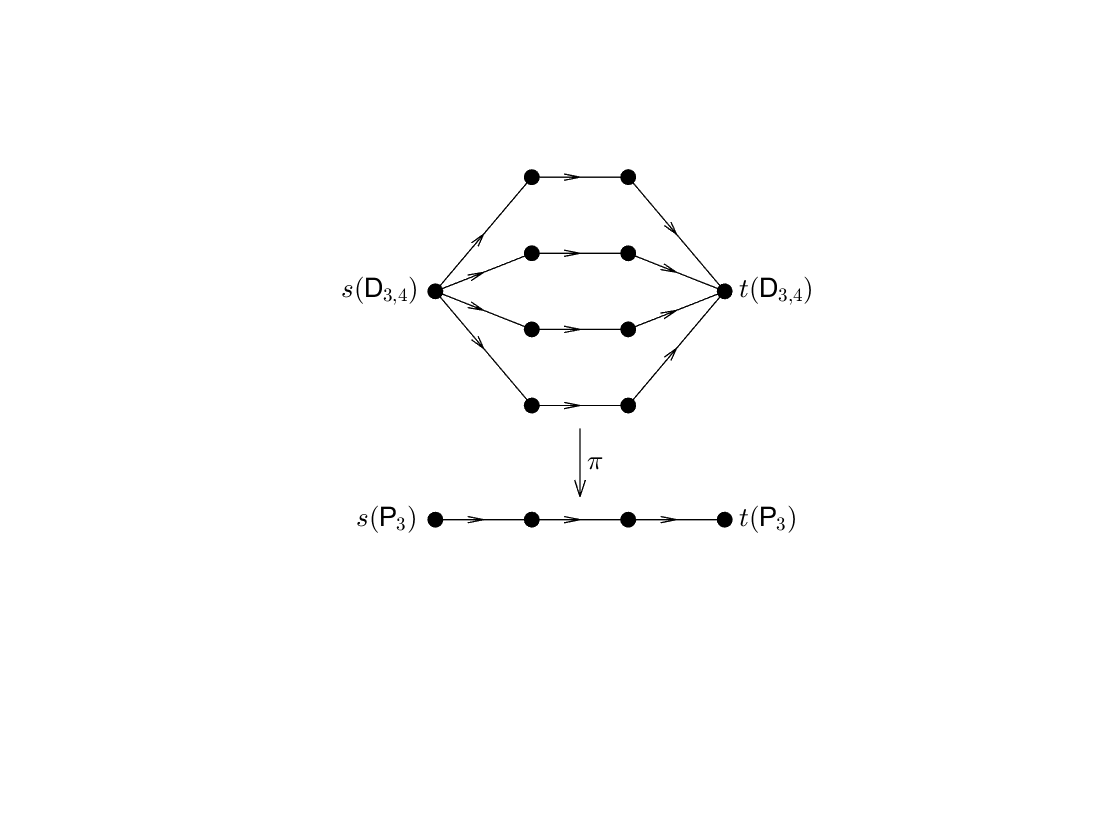}
       \caption{The $s$-$t$ graphs $\sD_{2,2}$ and $\sD_{3,4}$ and their $\Pk$-collapsing maps $\pi$.}
    \label{fig:pidiamonds}
\end{figure}

\begin{defi}
Let $H$ be a graph and $G$ an $s$-$t$ graph with $\Pk$-collapsing map $\pi$. Let $F_1 \subset \bR^{V(H)}$, $F_2 \subset \bR^{E(H)}$, $F_3 \subset \bR^{V(G)}$ be sets of functions with $f_3(s(G)) = f_3(t(G)) = 0$ for every $f_3 \in F_3$. Then we define the collection of functions $\scF(F_1,F_2,F_3) \subset \bR^{V(H \os G)}$ by
$$\scF(F_1,F_2,F_3) := ((id_{H} \os \pi)^* \circ \Lin)(F_1) \: \cup \: (F_2 \os F_3).$$
\end{defi}

\begin{figure}
    \centering
    \includegraphics[trim=300 100 250 125,clip,scale=.375]{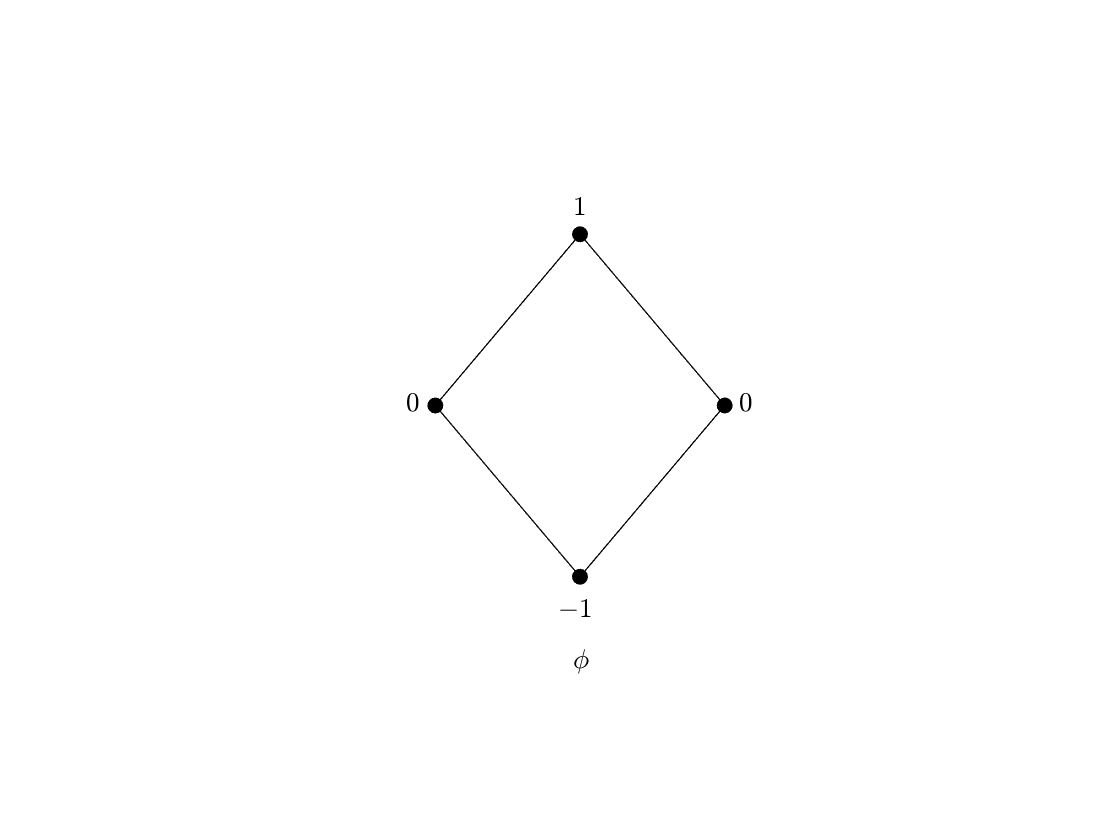}
   \includegraphics[trim=275 100 250 125,clip,scale=.375]{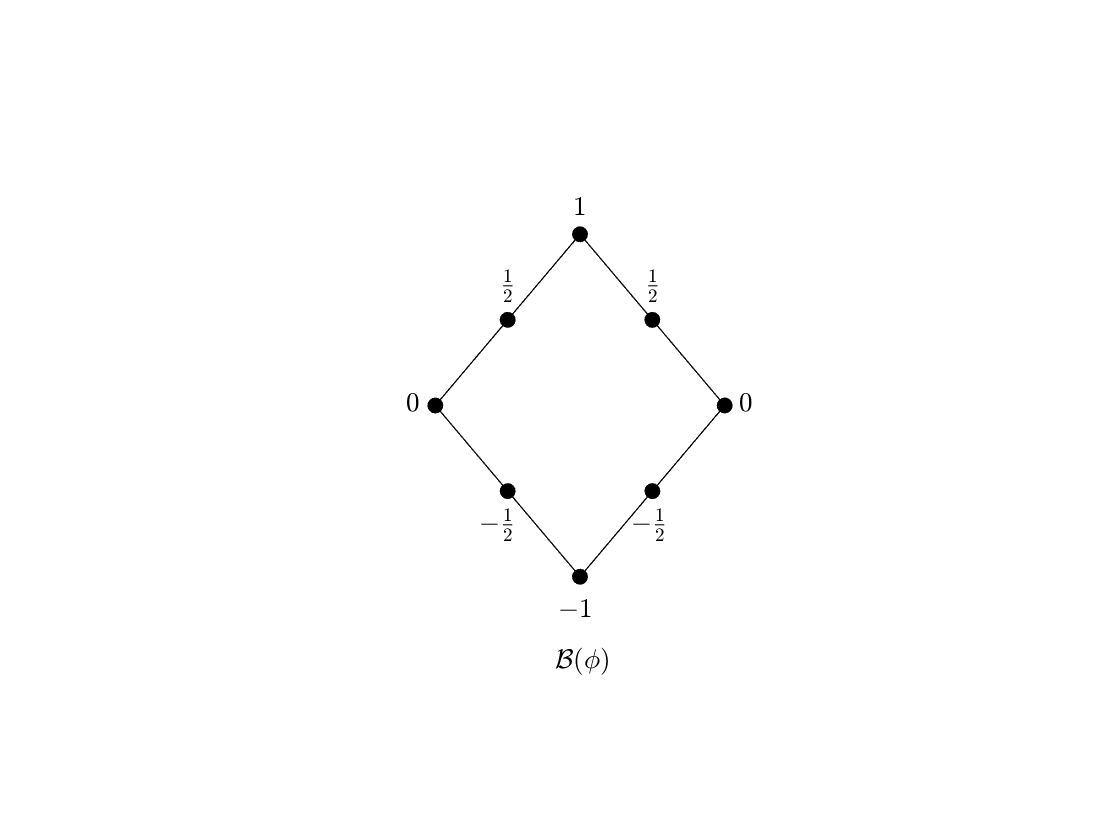}
    \includegraphics[trim=275 100 275 125,clip,scale=.375]{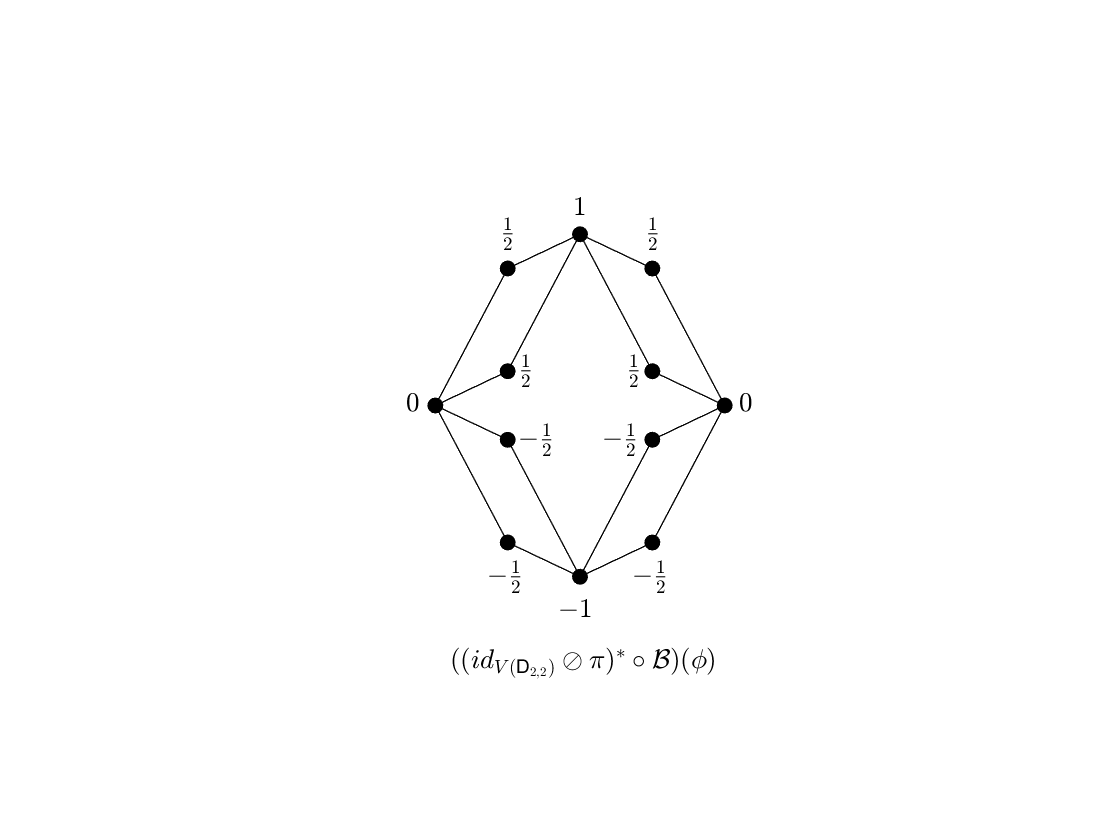} \\
    
    \includegraphics[trim=300 100 275 125,clip,scale=.33]{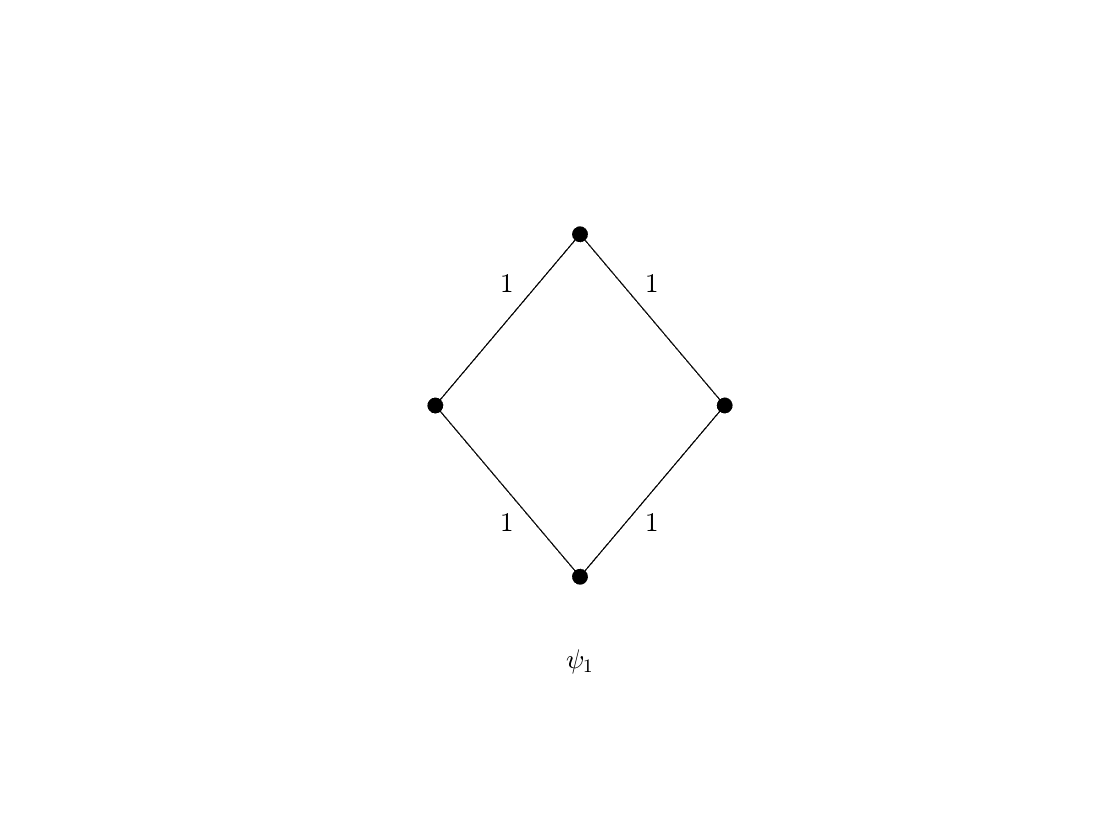}
    \includegraphics[trim=300 100 275 125,clip,scale=.33]{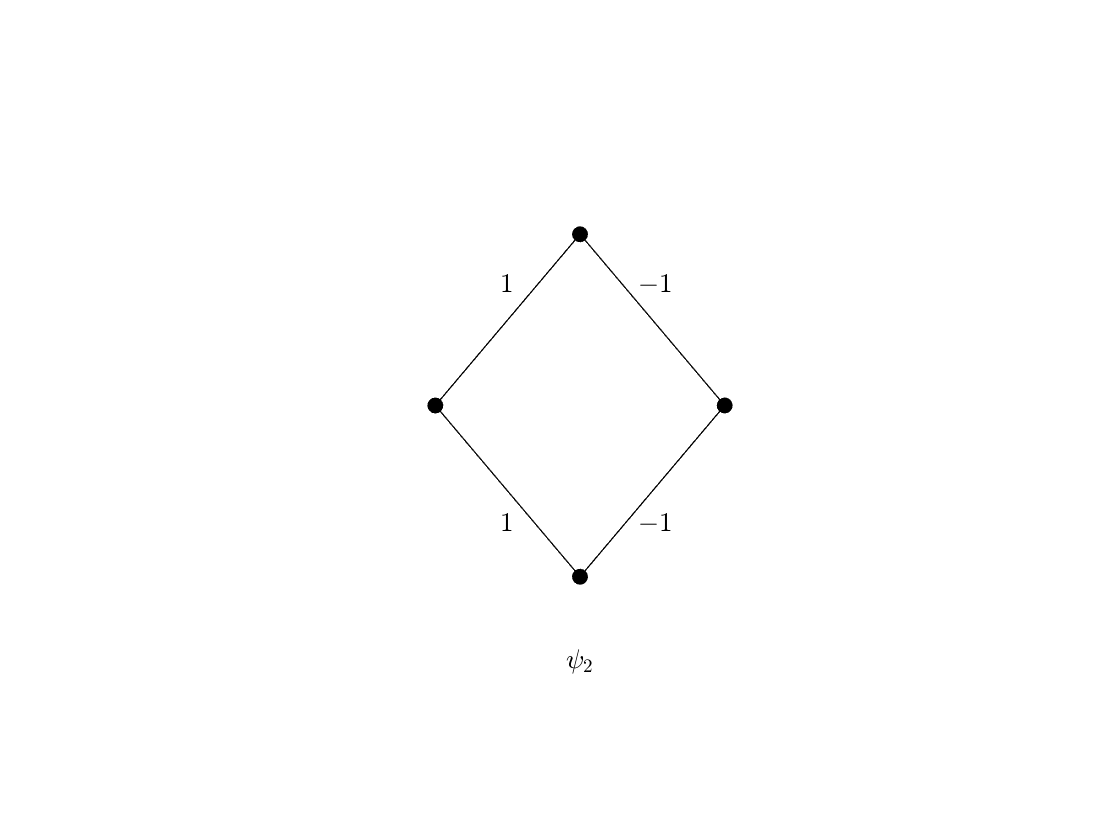}
    \includegraphics[trim=300 100 275 125,clip,scale=.33]{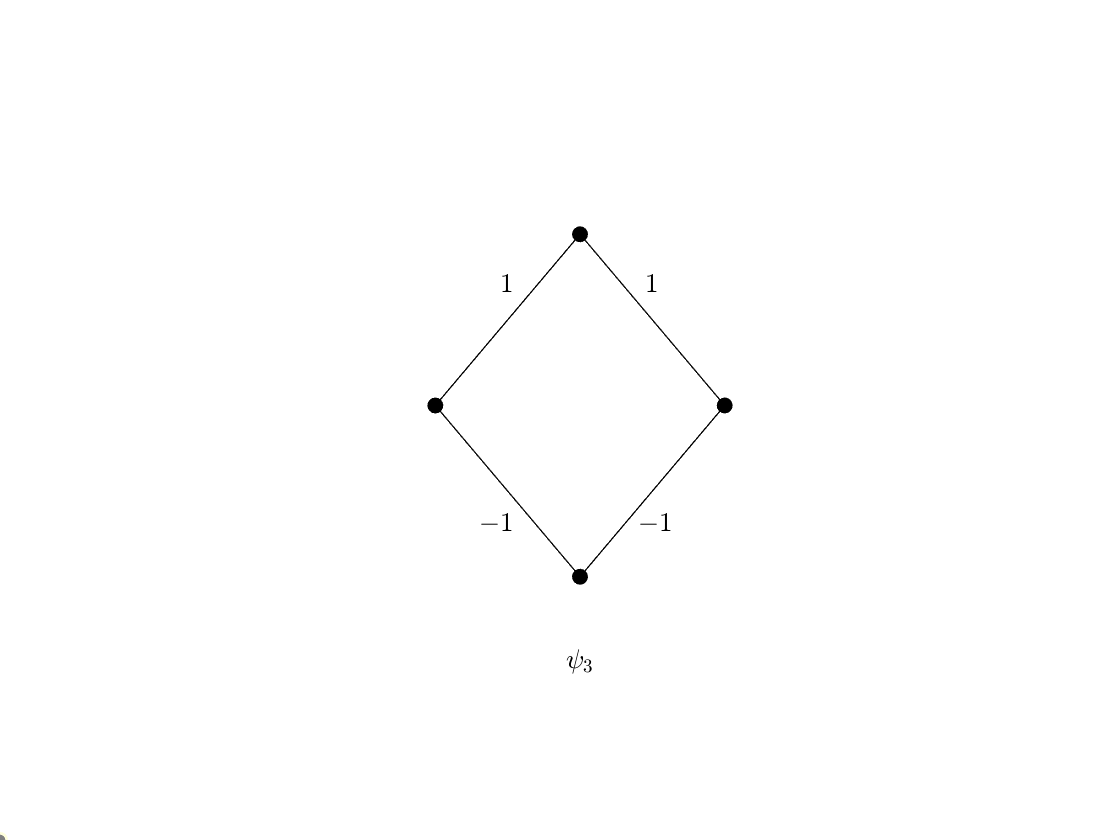}
    \includegraphics[trim=300 100 275 125,clip,scale=.33]{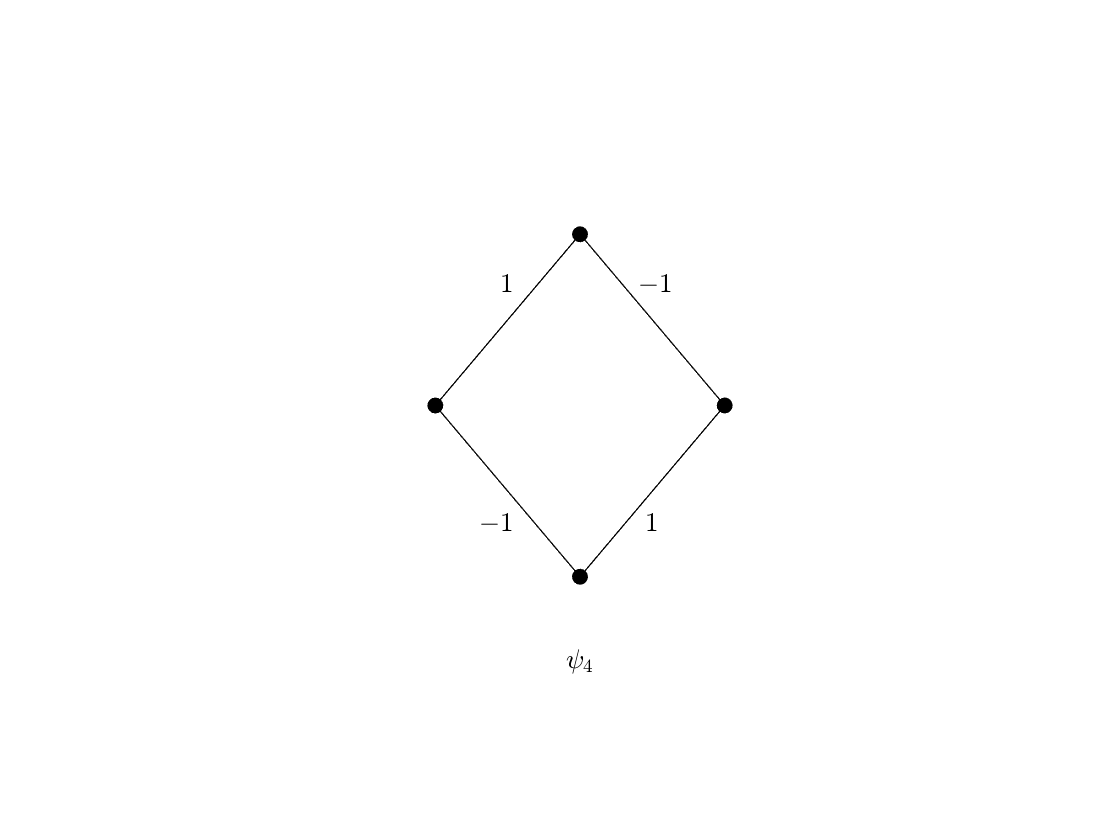} \\
    
    \includegraphics[trim=300 100 275 125,clip,scale=.33]{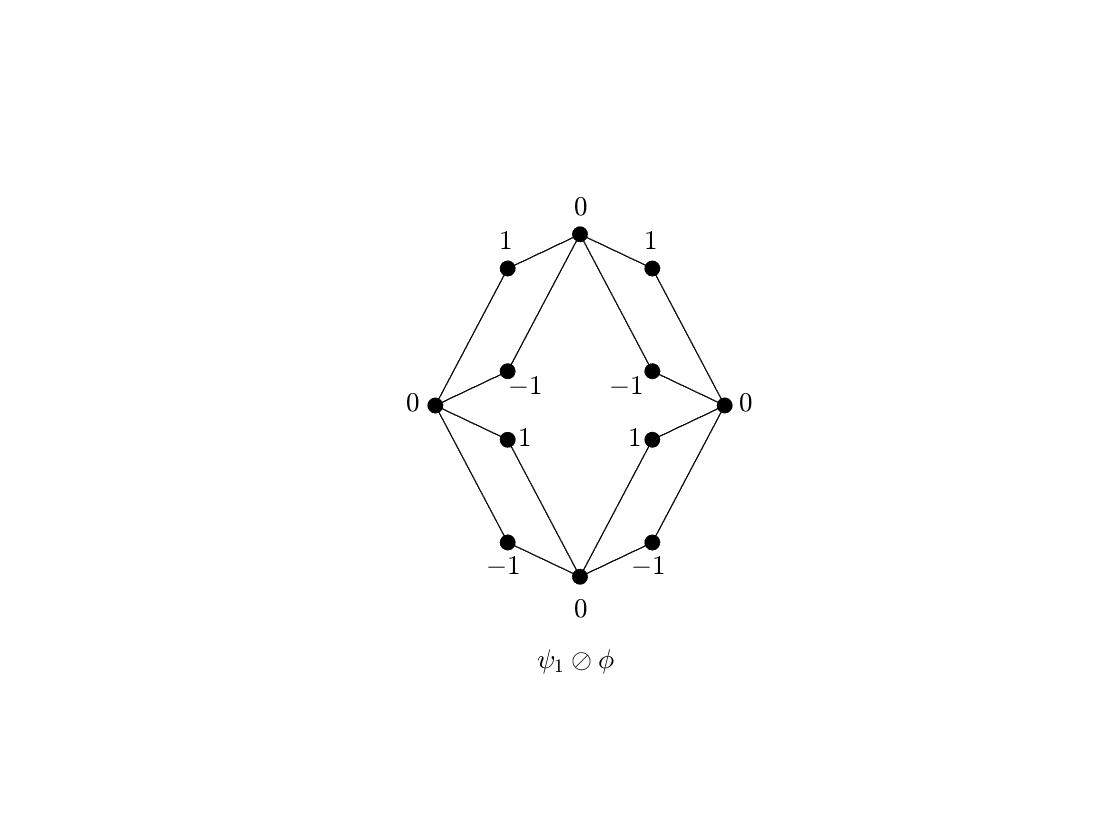}
    \includegraphics[trim=300 100 275 125,clip,scale=.33]{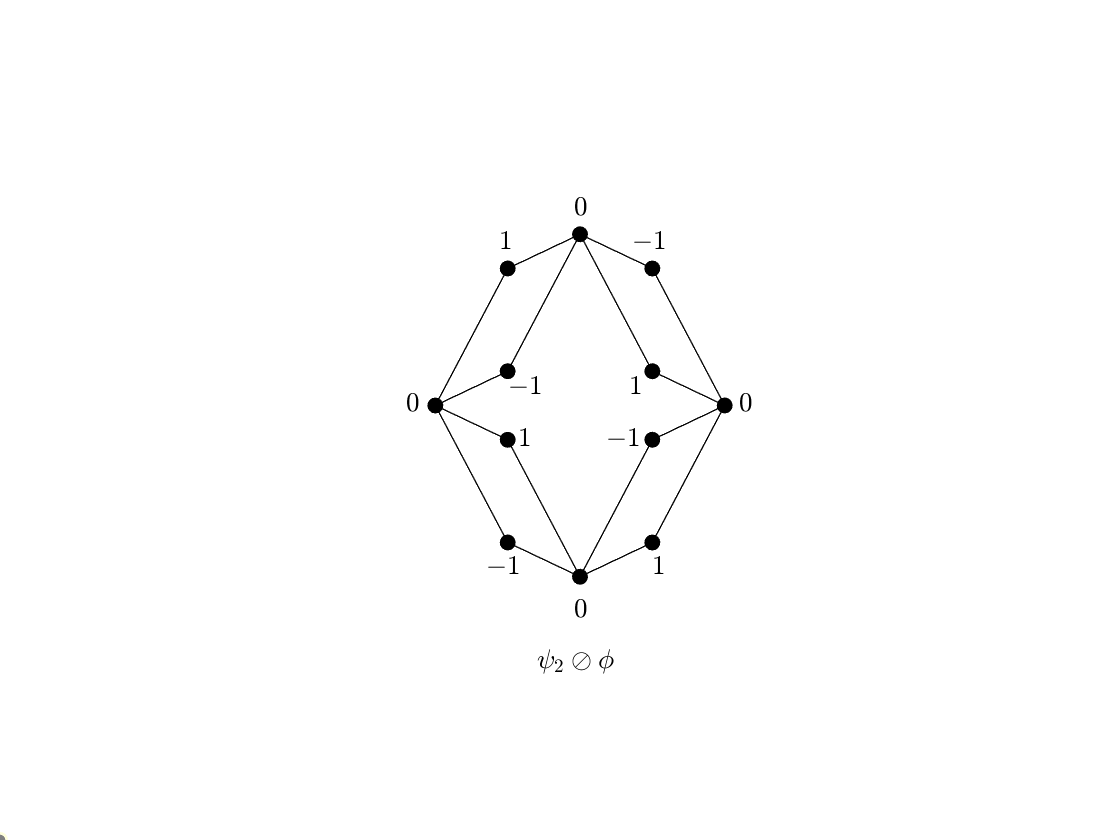}
    \includegraphics[trim=300 100 275 125,clip,scale=.33]{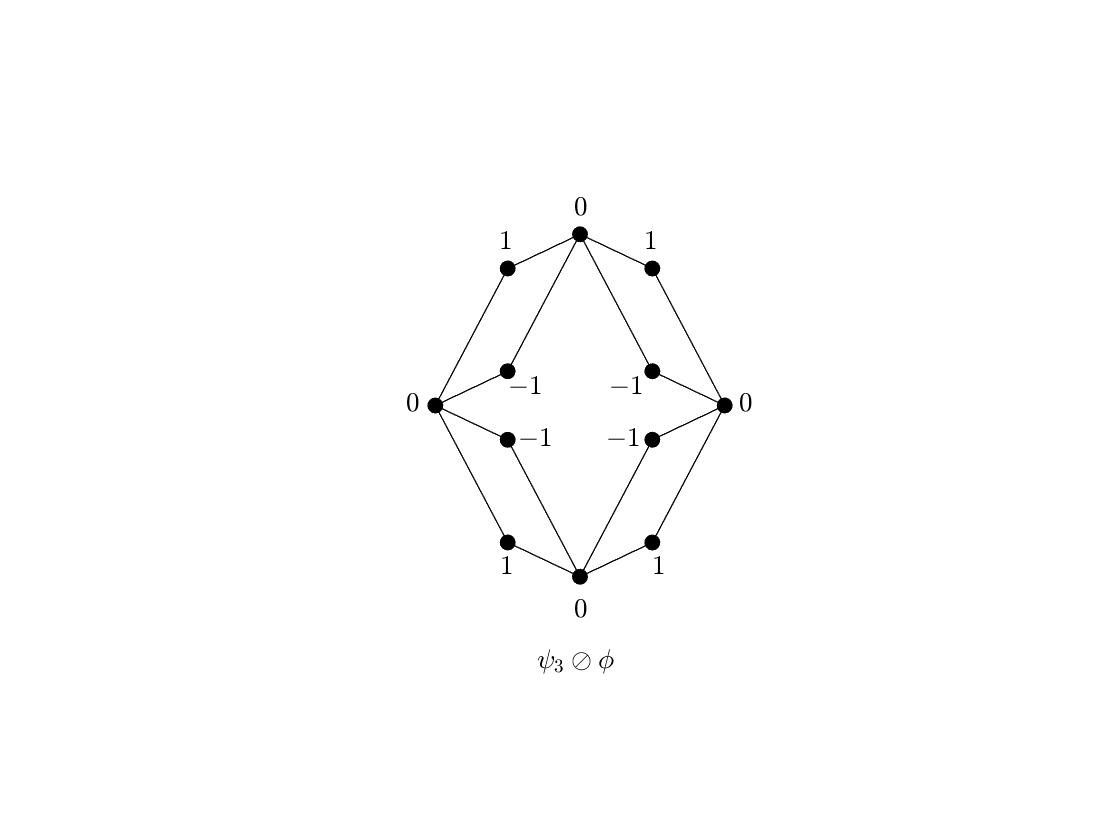}
    \includegraphics[trim=300 100 275 125,clip,scale=.33]{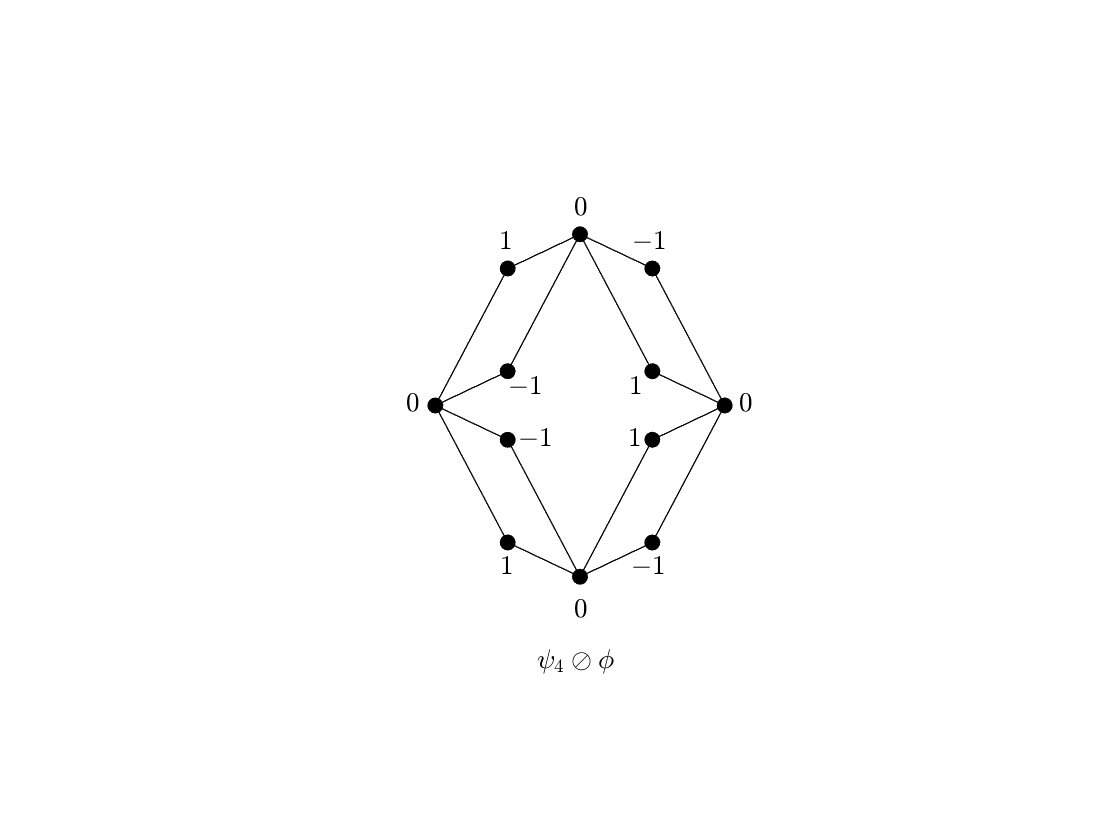}
    \caption{Construction of the set $\scF(F_1,F_2,F_3)$ from the sets $F_1 = \{\phi\}, F_2 = \{\psi_1,\psi_2,\psi_3,\psi_4\}$ and $F_3 = \{\phi\}$. The set $\scF(F_1,F_2,F_3)$ consists of the top right function $((id_{\sD_{2,2}}\os\pi)^*\circ\Lin)(\phi)$ and the bottom row of functions $\psi_1\os\phi,\psi_2\os\phi,\psi_3\os\phi,\psi_4\os\phi$.}
    \label{fig:slashfunctions}
\end{figure}

In the above definition, $(id_{H} \os \pi)^* \circ \Lin$ should be thought to transfer the set of functions $F_1$ on $V(H)$ to the set of functions $((id_{H} \os \pi)^* \circ \Lin)(F_1)$ on $V(H \os G)$ in a natural way that preserves $L_1$, $L_\infty$, and Lipschitz norms and also $(\Lin,\nu_H,\nu_{\Pk})$-orthogonality. The second set $F_2 \os F_3$ will be strongly orthogonal if $F_2$ and $F_3$ are each strongly orthogonal, and $F_2 \os F_3$ will be strongly orthogonal to $((id_{H} \os \pi)^* \circ \Lin)(F_1)$ if $F_3 \subset \ker(\bE_{\mu_\alpha(\nu_G)}^{\pi})$ for all $\alpha \in \Delta$. See Figure \ref{fig:slashfunctions} for an example when $H=G=\sD_{2,2}$. By repeating the procedure demonstrated in this figure, one may obtain orthogonal sets of functions $F_n \subset L_2(\sD_{2,2}^{\os n})$ witnessing the Lipschitz-spectral profile of $\sD_{2,2}^{\os n}$ having dimension 2, bandwidth $2^k$, and uniform control on the constants. Readers who are interested only in the diamond graphs $\sD_{2,2}^{\os n}$ may wish to provide these simpler details for themselves and avoid the technicalities presented in the remainder of the section (which are necessary for our result on general $\os$-products).

The next three theorems establish the precise facts needed to calculate the Lipschitz-spectral profile.
We save the proofs until the ensuing subsection. After stating the theorems, we give as a corollary a lower bound on the Lipschitz-spectral profile of $\os$-powers for certain graphs, such as the diamond graphs. 

\begin{theo}[Preservation of strong orthogonality] \label{thm:stronglyorthog}
Let $H$ be a graph, $G$ an $s$-$t$ graph with $\Pk$-collapsing map $\pi$, and $\nu_H,\nu_G$ measures on $E(H),E(G)$. Suppose that
\begin{itemize}
    \item $F_1 \subset \bR^{V(H)}$ is strongly $\nu_H$-orthogonal,
    \item $F_2 \subset \bR^{E(H)}$ is orthogonal in $L_2(E(H),\nu_H)$, and
    \item $F_3 \subset \bR^{V(G)}$ is strongly $\nu_G$-orthogonal and $(F_3)_{-},(F_3)_{+}, \subset \ker(\bE_{\nu_G}^{\pi})$.
\end{itemize}
Then $\scF(F_1,F_2,F_3) \subset \bR^{V(H \os G)}$ is strongly $\nu_{H}\os\nu_{G}$-orthogonal.
\end{theo}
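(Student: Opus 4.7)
The plan is to verify strong $\nu_H \os \nu_G$-orthogonality of $\scF(F_1,F_2,F_3)$ by a case analysis on the three possible pairings of distinct functions: (A) both drawn from $((id_{H} \os \pi)^* \circ \Lin)(F_1)$, (B) both drawn from $F_2 \os F_3$, and (C) one drawn from each set. The key preliminary step is to record the explicit form of the induced edge functions at an edge $e \os f \in E(H \os G)$. For a Type~(A) function $\Phi_{f_1} = ((id_{H} \os \pi)^* \circ \Lin)(f_1)$, unwinding definitions gives
\[
(\Phi_{f_1})_\eps(e \os f) = (1-\pi(f^\eps))\, f_1(e^-) + \pi(f^\eps)\, f_1(e^+),
\]
where $\pi(f^\eps) \in V(\Pk)$ is identified with a real in $[0,1]$; for a Type~(B) function $\Psi_{f_2,f_3} = f_2 \os f_3$ one has $(\Psi_{f_2,f_3})_\eps(e \os f) = f_2(e)\, f_3(f^\eps)$. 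In every case, the plan is to expand the product $(\cdot)_{\eps_1}(\cdot)_{\eps_2}$, apply Fubini against the product measure $\nu_H \os \nu_G$, and exhibit a vanishing factor.

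Cases~(A) and (B) should go through mechanically. For Case~(A) with distinct $f_1, f_1' \in F_1$, the product expands into four summands, each of whose $E(H)$-integral is of the form $\int_{E(H)} (f_1)_{\eta_1}(f_1')_{\eta_2}\, d\nu_H$ for some $\eta_1,\eta_2 \in \{-,+\}$, all of which vanish by strong $\nu_H$-orthogonality of $F_1$. For Case~(B) with distinct pairs $(f_2,f_3),(f_2',f_3')$, Fubini factors the integral as
\[
\left(\int_{E(H)} f_2 f_2'\, d\nu_H\right)\cdot\left(\int_{E(G)} (f_3)_{\eps_1}(f_3')_{\eps_2}\, d\nu_G\right),
\]
and one of the two factors vanishes: the first when $f_2\ne f_2'$ by $L_2(\nu_H)$-orthogonality of $F_2$, and the second when $f_2 = f_2'$ but $f_3 \ne f_3'$ by strong $\nu_G$-orthogonality of $F_3$.

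The principal case, and the only one where I expect any subtlety, is Case~(C); it is the sole place where the conditional-expectation hypothesis on $F_3$ is used. After Fubini, the integral $\int (\Phi_{f_1})_{\eps_1}(\Psi_{f_2,f_3})_{\eps_2}\, d(\nu_H\os\nu_G)$ splits into a sum of two terms of the form
\[
\left(\int_{E(H)} f_1(e^\eta)\, f_2(e)\, d\nu_H(e)\right)\cdot\left(\int_{E(G)} \alpha(f)\,(f_3)_{\eps_2}(f)\, d\nu_G(f)\right),
\]
where $\alpha(f) \in \{1-\pi(f^{\eps_1}),\, \pi(f^{\eps_1})\}$. Since $\pi$ is a graph morphism, $\pi(f^{\eps_1}) = \pi(f)^{\eps_1}$ depends only on $\pi(f) \in E(\Pk)$, so $\alpha$ is $\sigma(\pi)$-measurable on $E(G)$. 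The defining property of conditional expectation, combined with the hypothesis $(F_3)_\pm \subset \ker(\bE_{\nu_G}^\pi)$, then yields
\[
\int_{E(G)} \alpha(f)\,(f_3)_{\eps_2}(f)\, d\nu_G(f) = \int_{E(G)} \alpha(f)\, \bE_{\nu_G}^\pi((f_3)_{\eps_2})(f)\, d\nu_G(f) = 0,
\]
so the $E(G)$-factor of each summand vanishes independently of the $E(H)$-factor. This closes Case~(C) and completes the verification of strong $\nu_H\os\nu_G$-orthogonality.
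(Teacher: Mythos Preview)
Your proof is correct and follows essentially the same three-case split as the paper, with the same mechanism in each case: strong $\nu_H$-orthogonality for (A), the product factorization for (B), and the conditional-expectation hypothesis on $(F_3)_\pm$ for (C). The only cosmetic difference is that the paper packages the intermediate identities as reusable propositions (commuting $(\cdot)_\eps$ with pullbacks and $\os$-products, expressing $\Lin(f)_\eps\Lin(f')_{\eps'}$ as a linear combination of $f_\pm f'_\pm$, and commuting $\bE^{id_H\os\pi}_{\nu_H\os\nu_G}$ with $h\os(\cdot)$), whereas you unwind the definitions and compute directly; in particular, in Case~(C) the paper argues on the product space that $((id_H\os\pi)^*\circ\Lin)(f_1)_{\eps_1}$ is $\sigma(id_H\os\pi)$-measurable and that $\bE^{id_H\os\pi}_{\nu_H\os\nu_G}((f_2\os f_3)_{\eps_2})=0$, while you factor first via Fubini and invoke $\sigma(\pi)$-measurability of $\alpha$ on $E(G)$ alone---an equivalent and slightly more economical route.
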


Let $\vep_j := (\frac{j-1}{k}, \frac{j}{k})$ be the $j$th edge of $\Pk$. We say that a measure $\nu_{\Pk}$ on $E(\Pk)$ is \emph{reflection invariant} if $\nu_{\Pk}(\vep_j) = \nu_{\Pk}(\vep_{k-j+1})$ for every $1 \leq j \leq k$. It is easy to see that, if $\nu_{\Pk}$ is reflection invariant, then the induced measure $\mu(\nu_{\Pk})$ on $V(\Pk)$ is also reflection invariant in the sense that $\mu(\nu_{\Pk})(\tfrac{j}{k}) = \mu(\nu_{\Pk})(\tfrac{k-j}{k})$ for every $0 \leq j \leq k$.

For $H$ a graph and $F \subset \bR^{V(H)}$, we say that $F$ has the \emph{edge-sign property} if $f(e^-) \cdot f(e^+) \geq 0$ for every $f \in F$ and $\{e^-,e^+\} \in E(H)$. Whenever $\mu$ is a measure on a set $S$ and $p \in [1,\infty]$, we write $\inf \|F\|_{L_p(\mu)}$ and $\sup \|F\|_{L_p(\mu)}$ to denote $\inf_{f \in F} \|f\|_{L_p(\mu)}$ and $\sup_{f \in F} \|f\|_{L_p(\mu)}$, respectively.

\begin{theo}[Preservation of edge-sign property and $L_1,L_\infty$-norms] \label{thm:Lpnorms}
Suppose that $H,G,\pi,\nu_H,\nu_G$ are as in Theorem \ref{thm:stronglyorthog}. Suppose
\begin{itemize}
    \item $F_1 \subset \bR^{V(H)}$ is any subset,
    \item $F_2 \subset \bR^{E(H)}$ is any subset, and
    \item $F_3 \subset \bR^{V(G)}$ satisfies $F_3(s(G)) = F_3(t(G)) = \{0\}$.
\end{itemize}
Then
\begin{align}
\label{eq:thm2Linfty}
    \sup\|\scF(F_1,F_2,F_3)\|_{L_\infty(\nu_H\os\mu(\nu_G))} &= \max\{\sup\|F_1\|_{L_\infty(\mu(\nu_H))},\sup\|F_2\|_{L_\infty(\nu_H)} \cdot \sup\|F_3\|_{L_\infty(\mu(\nu_G))}\}.
\end{align}
Additionally, if $\pi_\#\nu_G$ is reflection invariant and if $F_1,F_3$ have the edge-sign property, then
\begin{align}
\label{eq:thm2edgesign}
    \scF(F_1,F_2,F_3) &\text{ has the edge-sign property,} \\
\label{eq:thm2L1}
    \inf\|\scF(F_1,F_2,F_3)\|_{L_1(\nu_H\os\mu(\nu_G))} &= \min\{\inf\|F_1\|_{L_1(\mu(\nu_H))},\inf\|F_2\|_{L_1(\nu_H)} \cdot \inf\|F_3\|_{L_1(\mu(\nu_G))}\}.
\end{align}

\end{theo}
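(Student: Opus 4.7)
The plan is to compute the relevant norms of functions in $\scF(F_1,F_2,F_3)$ by splitting along its defining union into (i) functions of the form $\Phi_\phi := ((id_H \os \pi)^* \circ \Lin)(\phi)$ for $\phi \in F_1$, and (ii) product-type functions $h \os g$ for $h \in F_2$, $g \in F_3$. For type (i), unfolding the definitions gives the convex-combination formula
\[
\Phi_\phi(e \os x) = (1-\pi(x))\phi(e^-) + \pi(x)\phi(e^+), \qquad e \os x \in V(H \os G),
\]
since $\pi(x) \in V(\Pk) \subset [0,1]$. For type (ii), \eqref{eq:slash-measure} immediately gives the factorization $\|h \os g\|_{L_p(\nu_H \os \mu(\nu_G))} = \|h\|_{L_p(\nu_H)} \cdot \|g\|_{L_p(\mu(\nu_G))}$ for $p \in \{1,\infty\}$, which handles the product-type contribution in both \eqref{eq:thm2Linfty} and \eqref{eq:thm2L1}.

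For \eqref{eq:thm2Linfty}, I will verify $\|\Phi_\phi\|_{L_\infty(\nu_H \os \mu(\nu_G))} = \|\phi\|_{L_\infty(\mu(\nu_H))}$. The convex-combination formula gives the upper bound $\|\Phi_\phi\|_\infty \leq \max_{v \in V(H)}|\phi(v)|$, and equality is attained by evaluating at $e \os s(G)$ and $e \os t(G)$, where the values $\phi(e^-)$ and $\phi(e^+)$ are recovered (using $\pi(s(G)) = 0$, $\pi(t(G)) = 1$ and the fact that every vertex of $H$ is an endpoint of some edge). Combining with the type-(ii) computation and taking suprema yields \eqref{eq:thm2Linfty}; neither reflection invariance nor the edge-sign property is needed here.

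For the edge-sign property \eqref{eq:thm2edgesign}, I fix an edge $e\os f \in E(H \os G)$. For $\Phi_\phi$, the values $\Phi_\phi(e \os f^\pm)$ are both convex combinations of $\phi(e^-)$ and $\phi(e^+)$, which share a common sign by the edge-sign property of $F_1$; hence their product is non-negative. For $h \os g$, the product $(h\os g)(e\os f^-)(h\os g)(e\os f^+)$ equals $h(e)^2 g(f^-)g(f^+) \geq 0$ by the edge-sign property of $F_3$. (Reflection invariance is not used at this step.)

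For \eqref{eq:thm2L1}, the main task is to show $\|\Phi_\phi\|_{L_1(\nu_H \os \mu(\nu_G))} = \|\phi\|_{L_1(\mu(\nu_H))}$. Using the edge-sign property of $F_1$ to pull the absolute value inside the convex combination, and then applying \eqref{eq:slash-measure}, I reduce to
\[
\|\Phi_\phi\|_1 = (1-a)\int_{E(H)}|\phi(e^-)|\,d\nu_H(e) + a\int_{E(H)}|\phi(e^+)|\,d\nu_H(e), \qquad a := \int_{V(G)}\pi(x)\,d\mu(\nu_G)(x),
\]
which, compared with \eqref{eq:induced-integral} applied to $|\phi|$, will finish the proof once I show $a = 1/2$. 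This last step is the main obstacle and is where reflection invariance enters. My plan is to first establish the compatibility identity $\pi_\# \mu(\nu_G) = \mu(\pi_\# \nu_G)$ on $V(\Pk)$ by direct computation from \eqref{eq:induced-measure}, using that $\pi$ is a graph morphism so that summing over $\pi$-fibers commutes with the edge-endpoint sums. Reflection invariance of $\pi_\# \nu_G$ then transfers to $\pi_\# \mu(\nu_G)$, making the pushforward symmetric under $y \mapsto 1-y$ on $V(\Pk)$, so $a = \int y\, d(\pi_\#\mu(\nu_G))(y) = 1/2$. Finally, taking infima over $F_1$ and over pairs $(h,g) \in F_2 \times F_3$ combines with the type-(ii) factorization to yield \eqref{eq:thm2L1}.
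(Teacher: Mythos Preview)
Your proposal is correct and follows essentially the same logic as the paper, though organized more directly. The paper factors the type-(i) computation through $V(H\os\Pk)$ in two steps: first Proposition~\ref{prop:measurepreserve} gives $\|(id_H\os\pi)^*(\Lin f_1)\|_{L_p(\nu_H\os\mu(\nu_G))} = \|\Lin f_1\|_{L_p(\nu_H\os\mu(\pi_\#\nu_G))}$, and then Proposition~\ref{prop:Linprops} (backed by Lemma~\ref{lem:intLin}) handles $\Lin$ on $H\os\Pk$; the reflection-invariance step is the averaging identity~\eqref{eq:Linaverage}. You instead unfold the convex-combination formula on $V(H\os G)$ directly and isolate the single scalar $a=\int_{V(G)}\pi\,d\mu(\nu_G)$; your identity $\pi_\#\mu(\nu_G)=\mu(\pi_\#\nu_G)$ is exactly what makes reflection invariance of $\pi_\#\nu_G$ force $a=\tfrac12$, and this is the same content as Lemma~\ref{lem:intLin} rephrased. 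For type~(ii) you both invoke~\eqref{eq:slash-measure} (the paper packages this as Proposition~\ref{prop:slashprops}). The paper's modular route has the advantage that Propositions~\ref{prop:Linprops} and~\ref{prop:pullbackLip} are reused verbatim in the proof of Theorem~\ref{thm:Lipgrowth}; your direct route is more self-contained for this theorem alone.
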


\begin{theo}[Increase of Lipschitz growth function] \label{thm:Lipgrowth}
Suppose that $H,G,\pi$ are as in Theorem \ref{thm:stronglyorthog} and that $V(H),V(G)$ are equipped with geodesic metrics $\sd_H,\sd_G$. Equip $V(H \os G)$ with the $\os$-geodesic metric $\sd_{H}\os\sd_{G}$. Suppose that
\begin{itemize}
    \item $F_1 \subset \bR^{V(H)}$ is any subset,
    \item $F_2 \subset \bR^{E(H)}$ is any subset, and
    \item $F_3 \subset \bR^{V(G)}$ is any subset with $F_3(s(G)) = F_3(t(G)) = \{0\}$.
\end{itemize}
Then, for $s\ge 0$
\begin{equation*}
    \gamma_{\scF(F_1,F_2,F_3)}(s) \geq \gamma_{F_1}(s) + |F_2|\cdot\gamma_{F_3}\left(\frac{s}{\sup_{f_2 \in F_2}\sup_{e \in E(H)} \frac{|f_2(e)|}{\sd_H(e)}}\right).
\end{equation*}
\end{theo}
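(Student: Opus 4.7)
The plan is to bound the Lipschitz constant of each element of $\scF(F_1,F_2,F_3)$ by that of its constituent function, and then tally contributions. Since $\sd_H \os \sd_G$ is a geodesic metric, Lipschitz constants are realized as the supremum of edge-ratios, so it suffices to verify two edge-wise bounds: (i) for every $f_1 \in F_1$, $\Lip_{\sd_H \os \sd_G}((id_H \os \pi)^*(\Lin(f_1))) \le \Lip_{\sd_H}(f_1)$; and (ii) for every $(f_2,f_3) \in F_2 \times F_3$, $\Lip_{\sd_H \os \sd_G}(f_2 \os f_3) \le C \cdot \Lip_{\sd_G}(f_3)$, where $C := \sup_{f_2 \in F_2}\sup_{e \in E(H)} \tfrac{|f_2(e)|}{\sd_H(e)}$.

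For (i), the key point is that $\pi$ is a graph morphism into $\Pk$, so for every edge $e \os f$ of $H \os G$ the fact that $(\pi(f^-),\pi(f^+)) \in E(\Pk)$ forces $\pi(f^+) - \pi(f^-) = 1/k$. Substituting into the definition of $\Lin$ gives
\[
(id_H \os \pi)^*(\Lin(f_1))(e \os f^+) - (id_H \os \pi)^*(\Lin(f_1))(e \os f^-) = \tfrac{1}{k}\bigl(f_1(e^+) - f_1(e^-)\bigr),
\]
so the edge-Lipschitz ratio at $e \os f$ equals $\tfrac{|f_1(e^+)-f_1(e^-)|}{k\,\sd_H(e)\,\sd_G(f)} \le \tfrac{\Lip_{\sd_H}(f_1)}{k\,\sd_G(f)}$, which is at most $\Lip_{\sd_H}(f_1)$ provided $\sd_G(f) \ge 1/k$ on every edge of $G$ — the natural normalization in the intended applications, where every edge of $G$ has $\sd_G$-weight exactly $1/k$ (as in $\sD_{k,m}$). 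For (ii), the increment along $e \os f$ factors as $(f_2 \os f_3)(e \os f^+) - (f_2 \os f_3)(e \os f^-) = f_2(e)(f_3(f^+) - f_3(f^-))$, so dividing by $\sd_H(e)\sd_G(f)$ yields the edge-Lipschitz ratio $\tfrac{|f_2(e)|}{\sd_H(e)} \cdot \tfrac{|f_3(f^+)-f_3(f^-)|}{\sd_G(f)} \le C \cdot \Lip_{\sd_G}(f_3)$.

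To conclude, the assignment $f_1 \mapsto (id_H \os \pi)^*(\Lin(f_1))$ is injective, since its restriction to the canonical embedding of $V(H)$ in $V(H \os G)$ returns $f_1$ (using $\pi(s(G)) = 0$, $\pi(t(G)) = 1$, and the definition of $\Lin$ at the endpoints). Treating $F_2 \os F_3$ as the indexed family over $F_2 \times F_3$, $\scF(F_1,F_2,F_3)$ decomposes as the disjoint indexed union of these two subfamilies. Each $f_1$ with $\Lip_{\sd_H}(f_1) \le s$ contributes one element with Lipschitz constant $\le s$ by (i), and each pair $(f_2,f_3)$ with $\Lip_{\sd_G}(f_3) \le s/C$ contributes one such element by (ii), yielding
\[
\gamma_{\scF(F_1,F_2,F_3)}(s) \ge \gamma_{F_1}(s) + |F_2|\cdot \gamma_{F_3}(s/C).
\]

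The main obstacle is the Lipschitz bound (i): the barycentric extension is tuned to the uniform metric on $\Pk$, and transferring it along $\pi$ to $(V(H \os G),\sd_H \os \sd_G)$ requires that the edge weights of $\sd_G$ are bounded below by $1/k$. This is an implicit but mild hypothesis, automatically satisfied in the diamond setting where every edge has $\sd_G$-weight exactly $1/k$, which is the case to which the theorem is applied in Corollary~\ref{cor:spec-slashpower}.
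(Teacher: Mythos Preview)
Your approach matches the paper's: it too reduces to the two Lipschitz bounds $\Lip((id_H\os\pi)^*(\Lin(f_1))) \le \Lip(f_1)$ and $\Lip(f_2\os f_3) \le \sup_{e}\tfrac{|f_2(e)|}{\sd_H(e)}\cdot\Lip(f_3)$, together with the obvious cardinality count. Your explicit edge-ratio computations are precisely the content of the paper's Propositions~\ref{prop:slashprops}, \ref{prop:Linprops}, and~\ref{prop:pullbackLip} unpacked.

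Your caveat about item~(i) is well-founded and applies equally to the paper's own proof. The paper invokes Proposition~\ref{prop:pullbackLip} for $\theta = id_H\os\pi$, whose hypothesis $\sd_{G'}(\theta(e)) = \sd_G(e)$, combined with the standard metric on $\Pk$ implicit in Proposition~\ref{prop:Linprops}, forces $\sd_G(f) = 1/k$ for every $f \in E(G)$. So the theorem as stated is slightly over-general; both your argument and the paper's go through only under this normalization (your weaker condition $\sd_G(f) \ge 1/k$ already suffices for the inequality). This is exactly hypothesis~(4) of Corollary~\ref{cor:spec-slashpower}, so the gap is harmless for every application in the paper, as you correctly note.
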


\begin{defi}[Base functions]
Let $G$ be an $s$-$t$ graph with $\Pk$-collapsing map $\pi$, and let $\nu_G$ be the uniform probability measure on $E(G)$. We say that $\phi: V(G) \to \bR$ is a \emph{base function of $G$} if $\phi$ has the edge-sign property and $\phi_-,\phi_+ \in \ker(\bE_{\nu_G}^\pi)$.
\end{defi}

\begin{rema}
Let $G,\pi,\nu_G$ be as in the previous definition. Although it won't be needed for our purposes, it can be checked that a nonzero base function on $G$ exists if and only if one of the following hold.
\begin{itemize}
    \item $|\pi^{-1}(\tfrac{j}{k})| \geq 3$ for some $\tfrac{j}{k} \in V(\Pk)$.
    \item $\pi^{-1}(\tfrac{j}{k}) = \{u_1,u_2\}$ for some $\tfrac{j}{k} \in V(\Pk)$ and $u_1 \neq u_2 \in V(G)$ with $\dfrac{\deg^+(u_1)}{\deg^+(u_2)} = \dfrac{\deg^-(u_1)}{\deg^-(u_2)}$, where $\deg^{\pm}(u) = |\{e \in E(G): e^{\pm} = u\}|$.
\end{itemize}
\end{rema}

The next corollary and the example following it are our main applications of the tools developed in this section.

\begin{coro}[Lipschitz-spectral profile of $\os$-powers] \label{cor:spec-slashpower}
Suppose that $G$ is an $s$-$t$ graph with $\Pk$-collapsing map $\pi$, $\nu_G$ is a probability measure on $E(G)$, and $\sd_G$ is a geodesic metric on $V(G)$. If
\begin{enumerate}
    \item $\nu_G$ is uniform,
    \item $\nu_{\Pk} := \pi_\# \nu_G$ is reflection invariant,
    \item $G$ admits a nonzero base function $\phi$, and
    \item $\sd_G(e) = \frac{1}{k}$ for every $e \in E(G)$,
\end{enumerate}
then, for every $n \geq 1$, $G^{\os n}$ has $(\sd_{G}^{\os n},\mu(\nu_{G}^{\os n}))$-Lipschitz-spectral profile of dimension $\frac{\log{|E(G)|}}{\log{k}}$ and bandwidth $k^n$, with constants $C_{L_1} \leq 2\frac{\|\phi\|_{L_\infty(\mu(\nu_G))}}{\|\phi\|_{L_1(\mu(\nu_G))}}$, $C_{L_\infty} \leq 1$, $C_{\gamma} \leq 2|E(G)|^2$.
\end{coro}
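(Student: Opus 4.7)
The plan is to build, by induction on $n$, an orthogonal family $F^{(n)} \subset \bR^{V(G^{\os n})}$ realizing the claimed Lipschitz-spectral profile, by iteratively applying the $\scF$ operation and the three structural Theorems \ref{thm:stronglyorthog}, \ref{thm:Lpnorms}, and \ref{thm:Lipgrowth}. After replacing $\phi$ by $\phi/\|\phi\|_{L_\infty(\mu(\nu_G))}$ (which preserves the scale-invariant ratio $\|\phi\|_\infty/\|\phi\|_1$ appearing in the claimed $C_{L_1}$), we may assume $\|\phi\|_{L_\infty(\mu(\nu_G))} \leq 1$; the edge-sign property of $\phi$ combined with $\sd_G \equiv 1/k$ then forces $\Lip(\phi) \leq k$. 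Set $F^{(1)} := \{\phi\}$ and, given $F^{(n)}$, define
\begin{equation*}
F^{(n+1)} := \scF\bigl(F^{(n)}, F_2^{(n)}, \{\phi\}\bigr),
\end{equation*}
where $F_2^{(n)} \subset \bR^{E(G^{\os n})}$ is an auxiliary orthogonal family on the edge set, specified next.

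For $F_2^{(n)}$ we require $|F_2^{(n)}| = |E(G)|^n$ pairwise orthogonal real-valued functions in $L_2(E(G^{\os n}), \nu_G^{\os n})$ satisfying $\sup_f \|f\|_\infty \leq 1$ and $\inf_f \|f\|_{L_1(\nu_G^{\os n})} \geq 1/2$. Since $\nu_G$ is uniform by hypothesis~(1), $\nu_G^{\os n}$ is also the uniform probability measure on the $|E(G)|^n$ edges, and fixing any bijection $E(G^{\os n}) \cong \bZ_{|E(G)|^n}$ lets us pull back the standard real discrete Fourier basis (constant, cosines, sines). Each such element $f$ satisfies $\|f\|_\infty \leq 1$ and $\|f\|_{L_2}^2 \geq 1/2$, so $|f|^2 \leq |f|$ gives $\|f\|_{L_1} \geq \|f\|_{L_2}^2/\|f\|_\infty \geq 1/2$.

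The inductive verification then has three pieces. Strong $\nu_G^{\os n}$-orthogonality of $F^{(n)}$ is supplied at each step by Theorem \ref{thm:stronglyorthog}, whose hypothesis on $F_3 = \{\phi\}$ is exactly the base function condition $\phi_\pm \in \ker \bE_{\nu_G}^\pi$. The edge-sign property, the bound $\sup\|F^{(n)}\|_\infty \leq \|\phi\|_\infty$, and the bound $\inf\|F^{(n)}\|_{L_1(\mu(\nu_G^{\os n}))} \geq \|\phi\|_1/2$ propagate through Theorem \ref{thm:Lpnorms}, using reflection invariance of $\nu_{\Pk} := \pi_\#\nu_G$ from hypothesis~(2), the edge-sign property of $\phi$, and formulas \eqref{eq:thm2Linfty}--\eqref{eq:thm2L1} together with the $L_1$ lower bound on $F_2^{(n)}$. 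These yield $C_{L_\infty} \leq 1$ and $C_{L_1} \leq 2\|\phi\|_\infty/\|\phi\|_1$.

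For the Lipschitz growth, iterating Theorem \ref{thm:Lipgrowth} with scaling factor $\sup_{f}\sup_{e}|f(e)|/\sd_{G^{\os j}}(e) \leq k^j$ (using $\|f\|_\infty \leq 1$ and $\sd_{G^{\os j}}(e) = k^{-j}$) together with $\Lip(\phi) \leq k$ yields
\begin{equation*}
\gamma_{F^{(n)}}(s) \;\geq\; \mathbf{1}_{s \geq k} + \sum_{j=1}^{n-1} |E(G)|^j \,\mathbf{1}_{s \geq k^{j+1}}.
\end{equation*}
A dyadic comparison on $[k^i, k^{i+1})$ for $1 \leq i \leq n-1$ then gives $\gamma_{F^{(n)}}(s) \geq (|E(G)|^i - 1)/(|E(G)| - 1) \geq |E(G)|^{i-1}$ while $s^\delta < |E(G)|^{i+1}$, so $\gamma_{F^{(n)}}(s)/s^\delta > 1/|E(G)|^2$; this yields $C_\gamma \leq 2|E(G)|^2$ after the factor of $2$ absorbs the boundary corrections at $s \in [1, k)$. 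The main obstacle is the choice of $F_2^{(n)}$: natural tensor-product bases derived from an orthogonal basis of $L_2(E(G), \nu_G)$ have $L_1$-norms that multiply, hence decay exponentially in $n$, so the $\os$-product structure must be abandoned in favor of Fourier analysis on an artificial cyclic group identification of $E(G^{\os n})$---which is only available because $\nu_G^{\os n}$ is uniform.
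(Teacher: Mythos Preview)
Your proof is correct and follows essentially the same inductive scheme as the paper: normalize $\phi$, set $F^{(1)}=\{\phi\}$, and at each step apply $\scF(F^{(n)},F_2^{(n)},\{\phi\})$ together with Theorems~\ref{thm:stronglyorthog}, \ref{thm:Lpnorms}, \ref{thm:Lipgrowth}. The only substantive difference is the construction of the auxiliary orthogonal family $F_2^{(n)}\subset L_2(E(G^{\os n}),\nu_G^{\os n})$. The paper invokes Sylvester's Hadamard matrices (Lemma~\ref{lem:Hadamard}) to obtain a family of size $\geq\tfrac12|E(G)|^{n}$ with $\|f\|_\infty\leq 1$ and $\|f\|_1\geq\tfrac12$, whereas you identify $E(G^{\os n})$ with $\bZ_{|E(G)|^n}$ and use the real Fourier basis to get a family of full size $|E(G)|^n$ with the same norm bounds (your $\|f\|_1\geq\|f\|_2^2/\|f\|_\infty\geq\tfrac12$ argument is correct). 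Your choice gives a slightly sharper Lipschitz growth bound, but you then relax to the same $C_\gamma\leq 2|E(G)|^2$, so the output is identical. Your closing remark about why tensor-product bases fail (multiplicative decay of $L_1$ norms) is a useful observation not made explicit in the paper. The one loose end---your appeal to ``the factor of $2$ absorbs the boundary corrections at $s\in[1,k)$''---is not literally correct, since $\gamma_{F^{(n)}}$ may vanish on $[1,k)$; but the paper's proof, which only verifies the growth at the points $s=k^m$, has precisely the same gap, and it does not affect the application to Theorem~\ref{thm:Kislyakov}.
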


Note that, in the conclusion of the corollary, the dimension and constants $C_{L_1},C_{L_\infty},C_{\gamma}$ are independent of $n$ and that the bandwidth grows exponentially with $n$.

\begin{proof}
Assume $\nu_G,\nu_{\Pk},\sd_G$ are as above, and let $\tilde{\phi}$ be a nonzero base function. We prove the following stronger statement by induction: For every $n \geq 1$, there exists a set of functions $F_1^n \subset \bR^{V(G^{\os n})}$ satisfying:
\begin{enumerate}
    \item\label{item:slashpower1} $F_1^n$ has the edge-sign property.
    \item\label{item:slashpower2} $F_1^n$ is strongly $\nu_{G}^{\os n}$-orthogonal.
    \item\label{item:slashpower3}  $\left(2\frac{\|\tilde{\phi}\|_{L_\infty(\mu(\nu_G))}}{\|\tilde{\phi}\|_{L_1(\mu(\nu_G))}}\right)^{-1} \leq \inf\|F_1^n\|_{L_1(\mu(\nu_G^{\os n}))} \leq \sup\|F_1^n\|_\infty \leq 1$.
    \item\label{item:slashpower4}  $\gamma_{F_1^n}(k^m) \geq (2|E(G)|)^{-1}(k^m)^\frac{\log{|E(G)|}}{\log{k}}$ for every $1 \leq m \leq n$.
\end{enumerate}
By Remark~\ref{rem:strongorthog->orthog}, to prove the desired estimates on the Lipschitz-spectral profile, only orthogonality in $L_2(V(G^{\os n}),\mu(\nu_G^{\os n}))$ and not the full force of \eqref{item:slashpower2} is needed, and \eqref{item:slashpower1} is not needed at all. However, for the induction to close, we do need \eqref{item:slashpower1} and \eqref{item:slashpower2}.

Define $\phi := \frac{\tilde{\phi}}{\|\tilde{\phi}\|_\infty}$. Then we have
\begin{itemize}
    \item $\|\phi\|_\infty = 1$,
    \item $\phi_-,\phi_+ \in \ker(\bE_{\nu_G}^{\pi})$,
    \item $\|\phi\|_{L_1(\mu(\nu_G))} = \frac{\|\tilde{\phi}\|_{L_1(\mu(\nu_G))}}{\|\tilde{\phi}\|_\infty}$, and
    \item $\phi$ has the edge-sign property.
\end{itemize}
Note that the edge-sign property, $\|\phi\|_\infty = 1$, and $\sd_G(e) = \frac{1}{k}$ for all $e \in E(G)$ together imply
\begin{itemize}
    \item $\Lip(\phi) \leq k$.
\end{itemize}

We now begin the inductive proof. The base case $n=1$ is satisfied by $F_1^1 = \{\phi\}$. Let $n \geq 2$, and assume that the statement holds for $n-1$. Let $F_1^{n-1} \subset \bR^{V(G^{\os n-1})}$ be a set of functions satisfying \eqref{item:slashpower1}-\eqref{item:slashpower4} given by the induction hypothesis. Let $F_2 \subset L_2(E(G^{\os n-1}),\nu_G^{\os n-1})$ be an orthogonal subset such that $\sup\|F_2\|_{\infty} \leq 1$, $\inf\|F_2\|_{1} \geq \frac{1}{2}$, and $|F_2| \geq \frac{1}{2}|E(G^{\os n-1})|$. Such a set exists by uniformity of $\nu_G^{\os n-1}$ and by Sylvester's construction of Hadamard matrices (see Lemma~\ref{lem:Hadamard}). Then we define $F_1^n := \scF(F_1^{n-1},F_2,\{\phi\}) \subset \bR^{V(G^{\os n-1} \os G)}$. By Theorem \ref{thm:stronglyorthog} and the inductive hypothesis, $F_1^n$ is strongly $\nu_{G}^{\os n}$-orthogonal, verifying \eqref{item:slashpower2}. By Theorem \ref{thm:Lpnorms} and the inductive hypothesis, $F_1^n$ has the edge-sign property, verifying \eqref{item:slashpower1}. We now verify \eqref{item:slashpower3}-\eqref{item:slashpower4}.

By \eqref{eq:decomposition2}, Theorem \ref{thm:Lpnorms}, and the inductive hypothesis,
\begin{align*}
    &\inf \|F_1^n\|_{L_1(\mu(\nu_G^{\os n}))} = \inf \|F_1^n\|_{L_1(\nu_G^{\os n-1}\os\mu(\nu_G))} \\
    &\hspace{.5in} = \min\{\inf\|F_1^{n-1}\|_{L_1(\mu(\nu_G^{\os n-1}))}, \inf\|F_2\|_{L_1(\nu_G^{\os n-1})} \cdot \|\phi\|_{L_1(\mu(\nu_G))}\} \geq \left(2\frac{\|\tilde{\phi}\|_\infty}{\|\tilde{\phi}\|_{L_1(\mu(\nu_G))}}\right)^{-1} \\
    &\sup \|F_1^n\|_\infty = \max\{\sup\|F_1^{n-1}\|_{\infty},\sup\|F_2\|_{\infty} \cdot \|\phi\|_{\infty}\} = 1,
\end{align*}
verifying \eqref{item:slashpower3}.

Finally, we verify \eqref{item:slashpower4}. By Theorem \ref{thm:Lipgrowth}, the facts that $|F_2| \geq \frac{1}{2}|E(G^{\os n-1})| = \frac{1}{2}|E(G)|^{n-1}$,  $\sup\|F_2\|_\infty \leq 1$, and $\Lip(\phi) \leq k$, and the inductive hypothesis applied to \eqref{item:slashpower4} for $F_1^{n-1}$, we get, for any $1 \leq m \leq n$,
\begin{align*}
    \gamma_{F_1^{n}}(k^m) &\geq \gamma_{F_1^{n-1}}(k^m) + |F_2|\cdot\gamma_{\{\phi\}}\left(\frac{k^m}{k^{n-1}\sup\|F_2\|_\infty}\right) \\
    &\geq \left\{\begin{matrix} \gamma_{F_1^{n-1}}(k^m) & m \leq n-1 \\ \frac{1}{2}|E(G)|^{n-1} & m = n \end{matrix}\right. \\
    &\geq \left\{\begin{matrix} 
    (2|E(G)|)^{-1}(k^m)^{\frac{\log{|E(G)|}}{\log{k}}} & m \leq n-1 \\ \frac{1}{2}|E(G)|^{n-1} & m=n \end{matrix}\right. \\
    &= (2|E(G)|)^{-1}(k^m)^{\frac{\log{|E(G)|}}{\log{k}}}.
\end{align*}
\end{proof}

We now apply our machinery to compute the Lipschitz-spectral profile of diamond graphs.
Let $k,m \geq 2$ be integers. Recall from Example~\ref{ex:diamonds} the definition of the diamond graph $\sD_{k,m}$, with $\nu_{\sD_{k,m}}$ the uniform probability measure on $E(\sD_{k,m})$ and $\sd_{\sD_{k,m}}$ the normalized geodesic metric on $V(\sD_{k,m})$, and recall the $\Pk$-collapsing map $\pi: V(\sD_{k,m}) \to V(\Pk)$ from Example~\ref{ex:pidiamonds}. It is clear that $\nu_{\Pk} := \pi_\#\nu_{\sD_{k,m}}$ is the uniform probability measure on $E(\Pk)$, and hence is reflection-invariant. Furthermore, we may define a base function $\phi: V(\sD_{k,m}) = V(\Pk) \times \{1,\dots m\}/\sim \to \bR$ by
\begin{align*}
    \phi([(u,i)]) \eqd \begin{cases} 1 & i \:\: \mathrm{odd,} \:\: i<m, \:\: u \not\in\{s(\sD_{k,m}), t(\sD_{k,m})\} \\ -1 & i \:\: \mathrm{even,} \:\: u \not\in\{s(\sD_{k,m}), t(\sD_{k,m})\} \\ 0 & \mathrm{otherwise}\end{cases}.
\end{align*}
See a picture of $\phi$ for $\sD_{2,2}$ in the top left corner of Figure~\ref{fig:slashfunctions}.

The following can be directly computed.
\begin{itemize}
    \item $\phi$ has the edge-sign property.
    \item $\phi_{-},\phi_+ \in \ker(\bE_{\nu_{\sD_{k,m}}}^{\pi})$.
    \item $\|\phi\|_{\infty} = 1$.
    \item $\|\phi\|_{1} = \frac{2(k-1)\cdot2\lfloor \frac{m}{2}\rfloor}{2km} \geq \frac{1}{3}$.
\end{itemize}
Hence, by Corollary~\ref{cor:spec-slashpower}, we obtain:

\begin{coro}\label{ex:spec-diamonds}
The diamond graph $\dia_{k,m}^{\os n}$ has $(\sd_{\dia_{k,m}}^{\os n},{\nu_{\dia_{k,m}}^{\os n}})$-Lipschitz-spectral profile of dimension $1+\frac{\log{m}}{\log{k}}$, bandwidth $k^n$, and constants $C_{L_1} \leq 6$, $C_{L_\infty} \leq 1$, $C_{\gamma} \leq 2k^2m^2$.
\end{coro}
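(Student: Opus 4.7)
The approach is to invoke Corollary~\ref{cor:spec-slashpower} with $G = \dia_{k,m}$, its $\Pk$-collapsing map $\pi$ from Example~\ref{ex:pidiamonds}, and the explicit function $\phi$ displayed above as the candidate base function. Once the four hypotheses of that corollary are verified, the claimed dimension, bandwidth, and constants will fall out by elementary counting and the identity $|E(\dia_{k,m})| = km$.

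Hypotheses~(1) and~(4) of Corollary~\ref{cor:spec-slashpower} are built into the definition of $\dia_{k,m}$: $\nu_{\dia_{k,m}}$ is uniform and $\sd_{\dia_{k,m}}(e) = 1/k$ on every edge. For hypothesis~(2), I would observe that $\pi\colon E(\dia_{k,m})\to E(\Pk)$ is $m$-to-$1$ (the $m$ parallel strands project onto the same edge of $\Pk$), so $\pi_\#\nu_{\dia_{k,m}}$ is the uniform probability measure on $E(\Pk)$, which is automatically reflection invariant. Hypothesis~(3) is the substantive check and splits into two pieces: the edge-sign property for $\phi$ is immediate because $\phi$ is constant on each open strand (its value depends only on the parity of the strand index $i$) and vanishes at the identified endpoints $s(\dia_{k,m}), t(\dia_{k,m})$, so no edge of $\dia_{k,m}$ crosses a sign change of $\phi$; the kernel condition $\phi_\pm \in \ker(\bE_{\nu_{\dia_{k,m}}}^\pi)$ reduces to showing that on each slice $\pi^{-1}(\vep)$ the numbers of strands carrying $\phi$-value $+1$ and $-1$ agree, which holds because the parity-based definition of $\phi$, with the $i = m$ strand discarded when $m$ is odd, produces exactly $\lfloor m/2\rfloor$ strands of each sign.

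Once these hypotheses are established, Corollary~\ref{cor:spec-slashpower} directly yields Lipschitz-spectral dimension $\tfrac{\log|E(\dia_{k,m})|}{\log k} = 1 + \tfrac{\log m}{\log k}$, bandwidth $k^n$, and constants $C_{L_\infty}\le 1$ and $C_\gamma \le 2|E(\dia_{k,m})|^2 = 2k^2m^2$. For the bound $C_{L_1}\le 2\|\phi\|_{L_\infty(\mu(\nu_{\dia_{k,m}}))}/\|\phi\|_{L_1(\mu(\nu_{\dia_{k,m}}))}$, the numerator equals $1$ by inspection, and the denominator is computed by noting that internal vertices of $\dia_{k,m}$ have degree $2$, so each carries $\mu(\nu_{\dia_{k,m}})$-mass $1/(km)$, and there are $2(k-1)\lfloor m/2\rfloor$ vertices on which $|\phi|=1$. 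The only (very mild) obstacle is then to lower-bound $\tfrac{2(k-1)\lfloor m/2\rfloor}{km}$ uniformly in $k,m\ge 2$; a brief case check pins the extremal case at $(k,m)=(2,3)$, where the ratio equals $1/3$, giving $C_{L_1}\le 6$ and completing the proof.
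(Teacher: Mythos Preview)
Your proposal is correct and follows essentially the same approach as the paper: both invoke Corollary~\ref{cor:spec-slashpower} after verifying that $\nu_{\dia_{k,m}}$ is uniform, $\pi_\#\nu_{\dia_{k,m}}$ is uniform (hence reflection invariant), the explicit $\phi$ is a nonzero base function, and $\sd_{\dia_{k,m}}(e)=1/k$. Your write-up in fact supplies slightly more justification than the paper for the kernel condition and for pinpointing the extremal pair $(k,m)=(2,3)$ in the bound $\|\phi\|_1 \ge 1/3$.
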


\subsection{Supporting propositions and lemmas}
In this subsection, we prove a host of supporting lemmas and propositions. Each proposition is directly used in the next subsection to prove the main theorems (Theorems~\ref{thm:stronglyorthog}~,~\ref{thm:Lpnorms}~,~\ref{thm:Lipgrowth}), and each lemma is used in the proof of one of the propositions. These results illustrate how our various operators commute with each other and behave with respect to $L_1$, $L_\infty$, and Lipschitz norms and strong orthogonality.

We begin with a set of three propositions pertaining to the induced edge-function operators that are used in the proof of Theorem~\ref{thm:stronglyorthog}. The first two, Propositions \ref{prop:epsslashcommute} and \ref{prop:epspullbackcommute}, can be viewed as stating that induced edge-function operators $(\cdot)_\pm: \bR^{V(H')} \to \bR^{E(H')}$ commute with pre-$\os$ operators $h \os (\cdot): \bR^{V(G)} \to \bR^{V(H\os G)}$ and with pullback operators $\theta^*$.

\begin{prop} \label{prop:epsslashcommute}
For every graph $H$, $s$-$t$ graph $G$, functions $h: E(H) \to \bR$, $g: V(G) \to \bR$ with $g(s(G)) = g(t(G)) = 0$, and $\eps \in \{-,+\}$,
\begin{equation*}
    (h \os g)_\eps = h \os g_\eps.
\end{equation*}
\end{prop}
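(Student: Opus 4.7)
The plan is essentially to unpack the relevant definitions on an arbitrary edge and observe that the two sides agree term-by-term. Fix an edge $e \os f \in E(H \os G)$, where $e \in E(H)$ and $f \in E(G)$. I would first apply the definition of the induced edge-function operator to the vertex function $h \os g$ (which is well-defined on $V(H\os G)$ precisely because of the hypothesis $g(s(G))=g(t(G))=0$), yielding
\begin{equation*}
(h \os g)_\eps(e \os f) \;=\; (h \os g)\bigl((e \os f)^\eps\bigr).
\end{equation*}
The next step is to invoke the identity $(e \os f)^\eps = e \os f^\eps$ recorded in the remark following Definition~\ref{def:slash-product}, together with the defining formula $(h \os g)(e' \os u) = h(e')\cdot g(u)$, to rewrite this as $h(e) \cdot g(f^\eps)$.

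On the other side of the claimed equation, $g_\eps \colon E(G) \to \bR$ is well-defined for any $g$ (no boundary hypothesis required), so $h \os g_\eps$ is an edge-function on $E(H \os G)$, and its defining formula gives $(h \os g_\eps)(e \os f) = h(e)\cdot g_\eps(f) = h(e)\cdot g(f^\eps)$. Comparing the two computations yields the equality on each edge, hence the identity of functions.

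There is no real obstacle here: the statement is a direct bookkeeping consequence of the compatibility $(e\os f)^{\pm} = e \os f^{\pm}$ between the edge orientation on $H \os G$ and the $\os$-factorization of edges. The only mild point to double-check is that the two sides have the same domain $E(H\os G)$ and that $h \os g$ is legitimately defined (using $g(s(G))=g(t(G))=0$), both of which are immediate.
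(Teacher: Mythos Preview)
Your proposal is correct and follows exactly the same approach as the paper's own proof: evaluate both sides on an arbitrary edge $e\os f$, use the identity $(e\os f)^\eps = e\os f^\eps$, and unfold the definitions to see that each side equals $h(e)\cdot g(f^\eps)$.
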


\begin{proof}
Let $H,G,h,g,\eps$ be as above. Let $e_1 \os e_2 \in E(H \os G)$. Then
\begin{align*}
    (h \os g)_\eps(e_1 \os e_2) &= (h \os g)((e_1 \os e_2)^\eps) = (h \os g)(e_1 \os e_2^\eps) \\
    &= h(e_1) \cdot g(e_2^\eps) = h(e_1) \cdot g_\eps(e_2) = (h \os g_\eps)(e_1 \os e_2).
\end{align*}
\end{proof}

\begin{prop} \label{prop:epspullbackcommute}
Let $\theta: V(G) \to V(G')$ be a graph morphism between graphs. For every $f: V(G') \to \bR$ and $\eps \in \{-,+\}$,
\begin{equation*}
    \theta^*(f)_\eps = \theta^*(f_\eps).
\end{equation*}
\end{prop}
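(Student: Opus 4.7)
The statement is a direct consequence of how the pullback operator interacts with the induced edge-function operator, and both sides unfold to the same expression after applying the relevant definitions. My plan is to proceed by pointwise verification: fix an arbitrary edge $e \in E(G)$ and an arbitrary sign $\eps \in \{-,+\}$, then compute both $\theta^*(f)_\eps(e)$ and $\theta^*(f_\eps)(e)$ by repeatedly applying the definitions.

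Specifically, starting from the left-hand side, I would write
\begin{equation*}
    \theta^*(f)_\eps(e) = \theta^*(f)(e^\eps) = f(\theta(e^\eps)),
\end{equation*}
using first the definition of the induced edge-function $(\cdot)_\eps$ on $\theta^*(f) \in \bR^{V(G)}$, then the definition of pullback $\theta^*(f) = f \circ \theta$. For the right-hand side,
\begin{equation*}
    \theta^*(f_\eps)(e) = f_\eps(\theta(e)) = f(\theta(e)^\eps),
\end{equation*}
using the definition of pullback (applied in the edge-function setting, recalling that the graph morphism $\theta$ induces a map $E(G) \to E(G')$), followed by the definition of the induced edge-function $f_\eps$ on $f \in \bR^{V(G')}$.

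The key identity that reconciles the two resulting expressions is that graph morphisms preserve edge endpoints, namely $\theta(e)^\eps = \theta(e^\eps)$; this is the defining property of the induced map $\theta\colon E(G) \to E(G')$ as noted in Subsection~\ref{sec:slash-products}. Hence the two computations coincide for every $e$ and every $\eps$, yielding $\theta^*(f)_\eps = \theta^*(f_\eps)$. There is no real obstacle here: the entire proof is a short unfolding of definitions combined with the functoriality of $\theta$ with respect to edge endpoints.
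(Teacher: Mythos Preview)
Your proof is correct and essentially identical to the paper's own proof: both fix an arbitrary edge $e \in E(G)$, unfold the definitions of pullback and induced edge-function, and invoke the identity $\theta(e^\eps) = \theta(e)^\eps$ for graph morphisms. The only cosmetic difference is that the paper writes the computation as a single chain of equalities, whereas you compute the two sides separately before matching them.
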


\begin{proof}
Let $f,\eps$ be as above. Let $e \in E(G)$. Then we have
\begin{align*}
    \theta^*(f)_\eps(e) = \theta^*(f)(e^\eps) = f(\theta(e^\eps)) = f(\theta(e)^\eps) = f_\eps(\theta(e)) = \theta^*(f_\eps)(e).
\end{align*}
\end{proof}

The third proposition on induced edge-function operators illustrates how certain inner-products of $\Lin(f)_{\pm}$ and $\Lin(f')_{\pm}$ can be expressed as linear combinations of $f_{\pm}f'_{\pm}$, $f_{\pm}f'_{\mp}$. This proposition easily implies the fact that the barycentric extension operator $\Lin$ preserves strong orthogonality, which is crucial to the proof of Theorem~\ref{thm:stronglyorthog}.

\begin{prop} \label{prop:linearcombo}
For every graph $H$, measure $\nu_{\Pk}$ on $E(\Pk)$, $f,f': V(H) \to \bR$, and $\eps,\eps' \in \{-,+\}$, there exist scalars $c_1,c_2,c_3,c_4 \in \bR$ such that, for every $e_1 \in E(H)$,
\begin{align*}
    \int_{E(\Pk)} (\Lin(f)_\eps \Lin(f')_{\eps'})(e_1\os e_2) d\nu_{\Pk}(e_2) = (c_1f_-f'_- + c_2f_-f'_+ + c_3f_+f'_- + c_4f_+f'_+)(e_1).
\end{align*}
\end{prop}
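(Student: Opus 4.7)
The plan is to make the pointwise computation of $\Lin(f)_\eps(e_1 \os e_2)$ completely explicit as an affine expression in $f_-(e_1)$ and $f_+(e_1)$ whose coefficients depend only on $e_2$, $\eps$, and $k$, and then multiply out.

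First, I would enumerate the edges of $\Pk$ as $\vep_j = (\tfrac{j-1}{k},\tfrac{j}{k})$ for $1 \le j \le k$. For any $e_1 \in E(H)$ the edge $e_1 \os \vep_j$ has source $e_1 \os \tfrac{j-1}{k}$ and target $e_1 \os \tfrac{j}{k}$, so directly from the definition of the barycentric extension operator,
\begin{equation*}
\Lin(f)_-(e_1 \os \vep_j) = a_-^j f_-(e_1) + b_-^j f_+(e_1), \qquad \Lin(f)_+(e_1 \os \vep_j) = a_+^j f_-(e_1) + b_+^j f_+(e_1),
\end{equation*}
where $a_-^j := 1-\tfrac{j-1}{k}$, $b_-^j := \tfrac{j-1}{k}$, $a_+^j := 1-\tfrac{j}{k}$, $b_+^j := \tfrac{j}{k}$. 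Crucially, these scalars depend only on $j$, $\eps$, $k$; in particular, they are independent of $e_1$ and of $f$.

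Expanding the pointwise product at $e_1 \os \vep_j$ and collecting terms then gives
\begin{equation*}
(\Lin(f)_\eps \Lin(f')_{\eps'})(e_1 \os \vep_j) = a_\eps^j a_{\eps'}^j (f_-f'_-)(e_1) + a_\eps^j b_{\eps'}^j (f_-f'_+)(e_1) + b_\eps^j a_{\eps'}^j (f_+f'_-)(e_1) + b_\eps^j b_{\eps'}^j (f_+f'_+)(e_1).
\end{equation*}
Integrating both sides against $\nu_{\Pk}$ (a finite sum over $j$) and factoring the values at $e_1$ out of the integral yields the claim with the four scalars
\begin{equation*}
c_1 := \sum_{j=1}^k \nu_{\Pk}(\vep_j)\, a_\eps^j a_{\eps'}^j,\quad c_2 := \sum_{j=1}^k \nu_{\Pk}(\vep_j)\, a_\eps^j b_{\eps'}^j,\quad c_3 := \sum_{j=1}^k \nu_{\Pk}(\vep_j)\, b_\eps^j a_{\eps'}^j,\quad c_4 := \sum_{j=1}^k \nu_{\Pk}(\vep_j)\, b_\eps^j b_{\eps'}^j,
\end{equation*}
all manifestly independent of $e_1$.

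No genuine obstacle arises here. The content of the statement is simply that, once one observes that $\Lin(f)_\eps(e_1 \os e_2)$ is, at each edge of $H \os \Pk$, an affine combination of $f_-(e_1)$ and $f_+(e_1)$ with weights depending only on the $\Pk$-coordinate $e_2$, the bilinearity of $(f,f')\mapsto \Lin(f)_\eps\Lin(f')_{\eps'}$ forces the product (at each $e_1 \os e_2$) to lie in the span of $\{f_-f'_-,f_-f'_+,f_+f'_-,f_+f'_+\}$ with $e_2$-dependent coefficients; integrating over $E(\Pk)$ then absorbs the $e_2$-dependence into four scalar constants.
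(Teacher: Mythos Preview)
Your proof is correct and follows essentially the same approach as the paper's: both expand $\Lin(f)_\eps(e_1\os\vep_j)$ as an affine combination of $f_-(e_1)$ and $f_+(e_1)$ with coefficients depending only on $j$, multiply out, and sum against $\nu_{\Pk}$. Your notation $a_\eps^j, b_\eps^j$ handles all four $(\eps,\eps')$ cases at once, whereas the paper writes out only the case $\eps=-$, $\eps'=+$ and leaves the rest as similar.
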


\begin{proof}
Let $H,\nu_{\Pk},f,f',\eps,\eps'$ be as above. We will show the proof in the case $\eps = -$ and $\eps' = +$. The other cases can be treated similarly. For any $e_1 \in E(H)$, we have
\begin{align*}
    &\int_{E(\Pk)} (\Lin(f)_- \Lin(f')_+)(e_1\os e_2) d\nu_{\Pk}(e_2) \\
    &= \sum_{i=1}^k ((1-\tfrac{i-1}{k})f(e_1^-) + \tfrac{i-1}{k}f(e_1^+)) ((1-\tfrac{i}{k})f'(e_1^-) + \tfrac{i}{k}f'(e_1^+)) \nu_{\Pk}((\tfrac{i-1}{k},\tfrac{i}{k})) \\
    &= \left(\sum_{i=1}^k (1-\tfrac{i-1}{k})(1-\tfrac{i}{k})\nu_{\Pk}((\tfrac{i-1}{k},\tfrac{i}{k}))\right)f(e_1^-)f'(e_1^-) + \left(\sum_{i=1}^k (1-\tfrac{i-1}{k})(\tfrac{i}{k})\nu_{\Pk}((\tfrac{i-1}{k},\tfrac{i}{k}))\right)f(e_1^-)f'(e_1^+) \\
    &\hspace{.3in}+ \left(\sum_{i=1}^k (\tfrac{i-1}{k})(1-\tfrac{i}{k})\nu_{\Pk}((\tfrac{i-1}{k},\tfrac{i}{k}))\right)f(e_1^+)f'(e_1^-) + \left(\sum_{i=1}^k (\tfrac{i-1}{k})(\tfrac{i}{k})\nu_{\Pk}((\tfrac{i-1}{k},\tfrac{i}{k}))\right)f(e_1^+)f'(e_1^+) \\
    &= (c_1f_-f'_- + c_2f_-f'_+ + c_3f_+f'_- + c_4f_+f'_+)(e_1).
\end{align*}
\end{proof}

We require one more proposition to be used in the proof of Theorem~\ref{thm:stronglyorthog}. It shows a commutation relation between pre-$\os$ operators and conditional expectations.

\begin{prop} \label{prop:condexpslashcommute}
Let $H$ be a graph, $\theta: V(G) \to V(G')$ an $s$-$t$ graph morphism between $s$-$t$ graphs, and $\nu_H,\nu_G$ measures on $E(H),E(G)$. Then for any $h: E(H) \to \bR$ and $g: E(G) \to \bR$,
\begin{equation*}
    \bE_{\nu_H\os\nu_G}^{id_H\os\theta}(h\os g) = h\os\bE_{\nu_G}^{\theta}(g).
\end{equation*}
\end{prop}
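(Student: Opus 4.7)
My plan is to verify that $h \os \bE_{\nu_G}^{\theta}(g)$ satisfies the two defining properties of the conditional expectation $\bE_{\nu_H \os \nu_G}^{id_H \os \theta}(h \os g)$: $\sigma(id_H \os \theta)$-measurability, and the correct integral against every $\sigma(id_H \os \theta)$-measurable test function. Uniqueness of the conditional expectation will then finish the argument.

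For measurability, since $\bE_{\nu_G}^{\theta}(g)$ is by definition $\sigma(\theta)$-measurable, it factors as $\bE_{\nu_G}^{\theta}(g) = \theta^{*}(\tilde{g})$ for some $\tilde{g}\colon E(G') \to \bR$. Therefore
\begin{equation*}
h \os \bE_{\nu_G}^{\theta}(g) \;=\; h \os \theta^{*}(\tilde{g}) \;=\; (id_H \os \theta)^{*}(h \os \tilde{g}),
\end{equation*}
which is $\sigma(id_H \os \theta)$-measurable. (Here I am using the trivial identification that the pullback by $id_H \os \theta$ of $h \os \tilde{g}$ equals $h \os \theta^{*}(\tilde g)$, which follows directly from the definition of $\os$-morphism on edges.)

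For the integration identity, I would pick an arbitrary $\sigma(id_H \os \theta)$-measurable function $F\colon E(H\os G) \to \bR$, write it as $F = (id_H \os \theta)^{*}(F')$ for some $F'\colon E(H \os G') \to \bR$, and then invoke the product structure of $\nu_H \os \nu_G$ (Fubini) to get, for any $q\colon E(G) \to \bR$,
\begin{equation*}
\int_{E(H \os G)} F \cdot (h \os q)\, d(\nu_H \os \nu_G) \;=\; \int_{E(H)} h(e) \left( \int_{E(G)} F'(e \os \theta(f))\, q(f)\, d\nu_G(f) \right) d\nu_H(e).
\end{equation*}
For each fixed $e\in E(H)$ the inner integrand $f \mapsto F'(e \os \theta(f))$ is $\sigma(\theta)$-measurable, so the defining property of $\bE_{\nu_G}^{\theta}$ lets me swap $g$ for $\bE_{\nu_G}^{\theta}(g)$ inside that integral without changing its value. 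Integrating back against $h(e)\,d\nu_H(e)$ produces the same identity with $g$ replaced by $\bE_{\nu_G}^{\theta}(g)$, which is exactly the condition characterizing $\bE_{\nu_H \os \nu_G}^{id_H \os \theta}(h \os g)$.

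The only real obstacle is a notational/foundational one: making sure that ``$\sigma(id_H \os \theta)$-measurable'' really coincides with ``of the form $(id_H \os \theta)^{*}(F')$,'' and that the product measure $\nu_H \os \nu_G$ on $E(H \os G) = E(H)\times E(G)$ genuinely factors into $\nu_H$ and $\nu_G$ (which it does by \eqref{eq:prod-measure} and the remark identifying $\os$-measure with product measure). Once those identifications are made, each step is essentially a one-line application of Fubini plus the pointwise-in-$e$ defining property of $\bE_{\nu_G}^{\theta}$.
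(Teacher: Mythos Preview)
Your proposal is correct and follows essentially the same approach as the paper's own proof: verify $\sigma(id_H\os\theta)$-measurability by writing $\bE_{\nu_G}^{\theta}(g)=\theta^*(\tilde g)$ and recognizing $h\os\theta^*(\tilde g)=(id_H\os\theta)^*(h\os\tilde g)$, then verify the integral identity by writing an arbitrary test function as $(id_H\os\theta)^*(F')$, applying Fubini, and using that the inner integrand is $\sigma(\theta)$-measurable to swap $g$ for $\bE_{\nu_G}^{\theta}(g)$. The paper organizes the Fubini computation slightly differently (defining the slices $f^{e_1}(e_2):=f(e_1\os e_2)$ explicitly), but the logic is identical.
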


\begin{proof}
Let $h,g$ be as above. We need to show that $h\os\bE_{\nu_G}^{\theta}(g)$ is $\sigma(id_H\os\theta)$-measurable and satisfies
\begin{equation}\label{eq:condexp}
    \int_{E(H\os G)} \phi \cdot (h\os\bE_{\nu_G}^{\theta}(g)) d(\nu_H\os\nu_G) = \int_{E(H\os G)} \phi \cdot (h\os g) d(\nu_H\os\nu_G)
\end{equation}
for every $\sigma(id_H\os\theta)$-measurable $\phi: E(H\os G) \to \bR$. Since $\bE_{\nu_G}^{\theta}(g)$ is $\sigma(\theta)$-measurable, there exists $f': E(G') \to \bR$ such that $\bE_{\nu_G}^{\theta}(g) = \theta^*(f')$. It is immediate to check that $(id_H\os\theta)^*(h\os f') = h\os \theta^{-1}(f')$, which shows that $h\os\bE_{\nu_G}^{\theta}(g)$ is $\sigma(id_H\os\theta)$-measurable.

Finally we verify \eqref{eq:condexp}. Let $(id_H\os\theta)^*(f): E(H\os G) \to \bR$ be an arbitrary $\sigma(id_H\os\theta)$-measurable function. For each $e_1 \in E(H)$, define $f^{e_1} : E(G') \to \bR$ by $f^{e_1}(e_2) := f(e_1\os e_2)$. It is immediate to check that for every $e_1\os e_2 \in E(H\os G)$, $(id_H\os\theta)^*(f)(e_1\os e_2) = \theta^*(f^{e_1})(e_2)$. Then we have
\begin{align*}
    \int_{E(H\os G)}& (id_H\os\theta)^*(f) \cdot (h\os\bE_{\nu_G}^{\theta}(g)) d(\nu_H\os\nu_G) \\
    &\overset{\eqref{eq:prod-measure}}{=} \int_{E(H)} \int_{E(G)} ((id_H\os\theta)^*(f) \cdot (h\os\bE_{\nu_G}^{\theta}(g)))(e_1\os e_2) d\nu_G(e_2) d\nu_H(e_1) \\
    &= \int_{E(H)} h(e_1) \int_{E(G)} (\theta^*(f^{e_1}) \cdot \bE_{\nu_G}^{\theta}(g))(e_2) d\nu_G(e_2) d\nu_H(e_1) \\
    &= \int_{E(H)} h(e_1) \int_{E(G)} (\theta^*(f^{e_1}) \cdot g)(e_2) d\nu_G(e_2) d\nu_H(e_1) \\
    &= \int_{E(H)} \int_{E(G)} ((id_H\os\theta)^*(f) \cdot (h\os g))(e_1\os e_2) d\nu_G(e_2) d\nu_H(e_1) \\
    &\overset{\eqref{eq:prod-measure}}{=} \int_{E(H\os G)} (id_H\os\theta)^*(f) \cdot (h\os g) d(\nu_H\os\nu_G).
\end{align*}
\end{proof}

The second set of propositions shows how $L_1$, $L_\infty$, and Lipschitz norms are affected by $\os$-operators, $\Lin$-operators, and pullback operators. They will be used in the proofs of Theorems~\ref{thm:Lpnorms} and \ref{thm:Lipgrowth}.

\begin{prop} \label{prop:slashprops}
Let $H$ be a graph, $G$ an $s$-$t$ graph, $\nu_H,\mu_G$ measures on $E(H),V(G)$, and $\sd_H,\sd_G$ geodesic metrics on $V(H),V(G)$. Equip $V(H \os G)$ with the $\os$-measure $\nu_H\os\mu_G$, and equip $V(H \os G)$ with the $\os$-geodesic metric $\sd_{H} \os \sd_{G}$. Then for every $h: E(H) \to \bR$ and $g \colon V(G) \to \bR$ with $g(s(G)) = g(t(G)) = 0$, the following holds.
	\begin{itemize}
    		\item $\|h \os g\|_\infty = \|h\|_\infty\|g\|_\infty$.
		\item $\Lip(h \os g) = \sup_{e \in E(H)} \left|h(e)\right|\sd_H(e)^{-1}\Lip(g)$.
		\item $\|h \os g\|_{L_1(\nu_H\os\mu_G)} = \|h\|_{L_1(\nu_H)}\|g\|_{L_1(\mu_G)}$.
	\end{itemize}
\end{prop}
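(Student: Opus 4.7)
The plan is that each of the three identities reduces to a direct computation from the definitions of the $\os$-product of functions, the $\os$-measure \eqref{eq:slash-measure}, and the $\os$-geodesic metric \eqref{eq:prod-metric}. I would treat them in order.

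For the $L_\infty$-identity, I would first observe that every vertex of $V(H\os G)$ has a representative of the form $e\os u$ with $(e,u)\in E(H)\times V(G)$, and that $(h\os g)(e\os u) = h(e)g(u)$ is well-defined: the only nontrivial equivalence classes of $\sim$ contain a representative with $u \in \{s(G), t(G)\}$, where $g(u) = 0$, so the value is zero regardless of which representative is chosen. Taking suprema over $E(H) \times V(G)$ independently then yields $\|h \os g\|_\infty = \|h\|_\infty \|g\|_\infty$. For the Lipschitz identity, I would invoke the standard fact that, since $\sd_H \os \sd_G$ is geodesic, the Lipschitz constant of a function equals the supremum, over edges, of the absolute increment divided by the edge-length. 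Parametrizing edges $e\os f \in E(H\os G)$ by pairs $(e,f) \in E(H) \times E(G)$ and using $(h\os g)(e\os f^\pm) = h(e)g(f^\pm)$ together with \eqref{eq:prod-metric}, I would write
\[
\Lip(h\os g) = \sup_{(e,f)} \frac{|h(e)| \cdot |g(f^+)-g(f^-)|}{\sd_H(e)\cdot \sd_G(f)} = \Bigl(\sup_{e\in E(H)} \frac{|h(e)|}{\sd_H(e)}\Bigr) \cdot \Lip(g),
\]
where the second factor is identified using geodesicity of $\sd_G$. For the $L_1$-identity, I would directly apply the integral formula \eqref{eq:slash-measure} defining $\nu_H \os \mu_G$ with $f := |h \os g|$, and separate variables via Fubini:
\[
\|h\os g\|_{L_1(\nu_H\os \mu_G)} = \int_{E(H)}\int_{V(G)} |h(e)|\cdot|g(u)|\, d\mu_G(u)\, d\nu_H(e) = \|h\|_{L_1(\nu_H)}\cdot\|g\|_{L_1(\mu_G)}.
\]

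There is no substantive obstacle to this proposition; it is a bookkeeping exercise in which all three identities follow immediately from the product structures of $h\os g$, $\nu_H\os \mu_G$, and $\sd_H\os \sd_G$. The only place requiring mild attention is well-definedness of $h\os g$ on equivalence classes in the $L_\infty$ step, which is precisely what the hypothesis $g(s(G)) = g(t(G)) = 0$ is designed to ensure.
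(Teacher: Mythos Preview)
Your proposal is correct and follows essentially the same approach as the paper's own proof: both treat the three items as direct computations from the definitions, with the $L_\infty$ item being immediate, the Lipschitz item handled by computing the edge-gradient $|\nabla(h\os g)(e_1\os e_2)| = |h(e_1)|\sd_H(e_1)^{-1}|\nabla g(e_2)|$ and taking suprema, and the $L_1$ item following from \eqref{eq:slash-measure}. You give slightly more detail on well-definedness in the $L_\infty$ step than the paper (which simply declares it ``obvious''), but there is no substantive difference.
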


\begin{proof}
Let $h,g$ be as above. The first item is obvious. For the second, let $e_1 \os  e_2 \in E(H \os G)$. Then we have
\begin{align*}
    |\nabla (h \os g)(e_1 \os e_2)| & = (\sd_{H} \os \sd_{G})(e_1 \os e_2)^{-1} |(h \os g)((e_1 \os e_2)^+) - (h \os g)((e_1 \os e_2)^-)|\\
    &= \sd_H(e_1)^{-1}\sd_G(e_2)^{-1}|h(e_1)g(e_2^+) - h(e_1)g(e_2^-)| \\
    &= |h(e_1)|\sd_{H}(e_1)^{-1} |\nabla (g)(e_2)|.
\end{align*}
Since $e_1 \os e_2 \in E(H \os G)$ was arbitrary, the conclusion follows by taking the supremum of each side. The third item follows immediately from \eqref{eq:slash-measure} and the definition of $h \os g$.
\end{proof}

The next proposition on preservation of $L_p$ norms follows more or less immediately from the definition of pushforward measure $\pi_\#\nu_G$ and the defining property of graph morphisms $\theta$. It is used in the proof of Theorem~\ref{thm:Lpnorms}.

\begin{prop} \label{prop:measurepreserve}
Let $H$ be a graph, $\theta: V(G) \to V(G')$ an $s$-$t$ graph morphism between $s$-$t$ graphs, and $\nu_H,\nu_G$ measures on $E(H),E(G)$. Then for every $f: V(H \os G') \to \bR$ and $p \in [1,\infty]$,
\begin{equation*}
    \|(id_H\os\theta)^*(f)\|_{L_p(\nu_H\os\mu(\nu_G))} = \|f\|_{L_p(\nu_H\os\mu(\theta_\#\nu_G))}.
\end{equation*}
\end{prop}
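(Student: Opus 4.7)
The strategy is to identify the pushforward of the $\os$-measure under the graph morphism $id_H \os \theta$ and then invoke the standard change-of-variables formula. More precisely, my plan is to establish the measure-theoretic identity
\begin{equation*}
(id_H\os\theta)_\# (\nu_H \os \mu(\nu_G)) = \nu_H \os \mu(\theta_\# \nu_G),
\end{equation*}
from which the conclusion follows immediately by applying the pushforward change-of-variables formula to $|f|^p$ for $p \in [1,\infty)$, and a routine support argument for $p = \infty$.

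The key intermediate step, which I expect to be the main technical point, is the auxiliary identity $\theta_\# \mu(\nu_G) = \mu(\theta_\# \nu_G)$ as measures on $V(G')$. This is a direct calculation from \eqref{eq:induced-measure}: for any $y \in V(G')$,
\begin{equation*}
(\theta_\#\mu(\nu_G))(y) = \sum_{x\in \theta^{-1}(y)} \mu(\nu_G)(x) = \tfrac{1}{2}\sum_{e \in E(G)\colon \theta(e^+)=y} \nu_G(e) + \tfrac{1}{2}\sum_{e \in E(G)\colon \theta(e^-)=y} \nu_G(e),
\end{equation*}
and reindexing each sum by fibers $\theta^{-1}(e')$ of $\theta\colon E(G)\to E(G')$ (which is well-defined since $\theta$ is a graph morphism and satisfies $\theta(e)^\pm = \theta(e^\pm)$) yields exactly $\mu(\theta_\#\nu_G)(y)$.

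Given this, the pushforward identity follows by verifying it against test functions via \eqref{eq:slash-measure}. For any $f \colon V(H \os G') \to \bR$, the definition of pushforward and \eqref{eq:slash-measure} give
\begin{equation*}
\int f \, d\bigl((id_H\os\theta)_\#(\nu_H\os\mu(\nu_G))\bigr) = \int_{E(H)} \int_{V(G)} f(e \os \theta(x)) \, d\mu(\nu_G)(x)\, d\nu_H(e),
\end{equation*}
and applying the ordinary change-of-variables formula to the inner integral together with the auxiliary identity $\theta_\#\mu(\nu_G) = \mu(\theta_\#\nu_G)$ rewrites the right-hand side as $\int_{E(H)} \int_{V(G')} f(e \os y)\, d\mu(\theta_\#\nu_G)(y)\, d\nu_H(e) = \int f \, d(\nu_H \os \mu(\theta_\#\nu_G))$, by \eqref{eq:slash-measure} applied in the space $V(H\os G')$.

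Finally, for $p \in [1,\infty)$, applying the pushforward identity to $|f|^p$ (which equals $|(id_H \os \theta)^*(f)|^p = (id_H \os \theta)^*(|f|^p)$) gives equality of $L_p$ norms to the $p$-th power, hence of $L_p$ norms. For $p = \infty$, the map $id_H \os \theta$ sends the support of $\nu_H \os \mu(\nu_G)$ onto the support of $\nu_H \os \mu(\theta_\#\nu_G)$ (using the auxiliary identity again to identify supports of the vertex measures), so the essential suprema agree.
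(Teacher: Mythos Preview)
Your proof is correct and follows essentially the same route as the paper's: both establish the integral identity $\int (id_H\os\theta)^*(g)\,d(\nu_H\os\mu(\nu_G)) = \int g\,d(\nu_H\os\mu(\theta_\#\nu_G))$ for arbitrary $g$ and then specialize to $g=|f|^p$. The only difference is packaging: you isolate the auxiliary identity $\theta_\#\mu(\nu_G) = \mu(\theta_\#\nu_G)$ and invoke change-of-variables, whereas the paper proves the same integral equality in a single chain by unfolding both sides via \eqref{eq:induced-integral} down to edge integrals over $E(G)$ and $E(G')$.
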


\begin{proof}
Let $g: V(H \os G') \to \bR$ be any function. For each $e \in E(H)$, define the contraction of $g$ along $e$ by $g^e: V(G') \to \bR$ by $g^e(u) := g(e\os u)$. The conclusion of the proposition follows by choosing $g = |f|^p$ (for $p < \infty$, the conclusion is obvious for $p=\infty$) and applying the following calculation:
\begin{align*}
    \int_{V(H\os G)} (id_H\os\theta)^*(g) d(\nu_H\os\mu(\nu_G)) &\overset{\eqref{eq:slash-measure}}{=} \int_{E(H)} \int_{V(G)} g(e\os\theta(u)) d\mu(\nu_G)(u) d\nu_H(e) \\
    &\overset{\eqref{eq:induced-integral}}{=} \int_{E(H)} \int_{E(G)} \frac{g(e\os\theta(e_1^-)) + g(e\os\theta(e_1^+))}{2} d\nu_G(e_1) d\nu_H(e) \\
    &= \int_{E(H)} \int_{E(G)} \frac{g^e(\theta(e_1)^-) + g^e(\theta(e_1)^+)}{2} d\nu_G(e_1) d\nu_H(e) \\
    &= \int_{E(H)} \int_{E(G)} \frac{\theta^*(g^e_-)(e_1) + \theta^*(g^e_+)(e_1)}{2} d\nu_G(e_1) d\nu_H(e) \\
    &= \int_{E(H)} \int_{E(G')} \frac{g^e_-(e_1) + g^e_+(e_1)}{2} d\theta_\#\nu_G(e_1) d\nu_H(e) \\
    &\overset{\eqref{eq:induced-integral}}{=} \int_{E(H)} \int_{V(G')} g^e(u) d\mu(\theta_\#\nu_G)(u) d\nu_H(e) \\
    &= \int_{E(H)} \int_{V(G')} g(e\os u) d\mu(\theta_\#\nu_G)(u) d\nu_H(e) \\
    &\overset{\eqref{eq:slash-measure}}{=} \int_{V(H\os G')} g d(\nu_H\os\mu(\theta_\#\nu_G)).
\end{align*}
\end{proof}

The next lemma states that barycentric extension operators preserve expectations when $\nu_{\Pk}$ is reflection invariant. It is only used to prove Proposition~\ref{prop:Linprops}, which in turn is used in the proofs of Theorems~\ref{thm:Lpnorms} and \ref{thm:Lipgrowth}.

\begin{lemm} \label{lem:intLin}
Let $H$ be a graph and $\nu_H$ a measure on $E(H)$. Then for any reflection invariant probability measure $\mu_{\Pk}$ on $V(\Pk)$ and function $f \colon V(H) \to \bR$,
	\begin{equation*}
		\int_{V(H \os \Pk)} \Lin(f) d(\nu_{H}\os \mu_{\Pk}) = \int_{V(H)} f d\mu(\nu_{H}).
	\end{equation*}
\end{lemm}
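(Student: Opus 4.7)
The plan is a direct computation that unfolds both sides and exploits reflection invariance to show the two endpoint-weights are equal, then uses that $\mu_{\Pk}$ is a probability measure to pin their common value at $1/2$.

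First I would rewrite the left-hand side using the defining property \eqref{eq:slash-measure} of the $\os$-measure and the formula for $\Lin(f)$, giving
\begin{align*}
\int_{V(H\os\Pk)} \Lin(f)\, d(\nu_H\os\mu_{\Pk})
&= \int_{E(H)} \sum_{i=0}^k \Lin(f)(e\os\tfrac{i}{k})\, \mu_{\Pk}(\tfrac{i}{k})\, d\nu_H(e) \\
&= \int_{E(H)} \bigl[ c_- f(e^-) + c_+ f(e^+) \bigr]\, d\nu_H(e),
\end{align*}
where $c_- := \sum_{i=0}^k (1-\tfrac{i}{k})\mu_{\Pk}(\tfrac{i}{k})$ and $c_+ := \sum_{i=0}^k \tfrac{i}{k}\mu_{\Pk}(\tfrac{i}{k})$.

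Next I would use reflection invariance of $\mu_{\Pk}$ via the substitution $j = k-i$ to get $c_- = c_+$. Since also $c_- + c_+ = \sum_{i=0}^k \mu_{\Pk}(\tfrac{i}{k}) = 1$ (as $\mu_{\Pk}$ is a probability measure), both constants equal $\tfrac{1}{2}$. Finally, applying \eqref{eq:induced-integral} with $\alpha \equiv \tfrac{1}{2}$ to $\mu(\nu_H)$ yields
\begin{equation*}
\int_{V(H)} f\, d\mu(\nu_H) = \int_{E(H)} \tfrac{1}{2}\bigl[f(e^-) + f(e^+)\bigr]\, d\nu_H(e),
\end{equation*}
which matches the expression obtained for the left-hand side, completing the proof.

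There is no real obstacle here; this is a bookkeeping lemma whose sole content is that the barycentric weights from $\Pk$ average to $\tfrac{1}{2}$ at each endpoint precisely because of the reflection symmetry assumed on $\mu_{\Pk}$.
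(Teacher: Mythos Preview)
Your proof is correct and follows essentially the same approach as the paper's: both unfold the integral via \eqref{eq:slash-measure}, use reflection invariance of $\mu_{\Pk}$ to symmetrize, and finish with \eqref{eq:induced-integral}. The only cosmetic difference is that the paper pairs the values $\Lin(f)(e\os\tfrac{i}{k})$ and $\Lin(f)(e\os(1-\tfrac{i}{k}))$ directly to obtain the average $\tfrac{1}{2}(f(e^-)+f(e^+))$, whereas you first isolate the endpoint coefficients $c_\pm$ and then show $c_-=c_+=\tfrac12$; the underlying symmetry argument is identical.
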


\begin{proof}
Let $\nu_{\Pk},f$ be as above. It is easily verified from the definition that
\begin{equation} \label{eq:Linaverage}
    \frac{\Lin(f)(e\os \tfrac{i}{k})+\Lin(f)(e\os (1-\tfrac{i}{k}))}{2} = \frac{f(e^-)+f(e^+)}{2}
\end{equation}
for every $e \in E(H)$ and $\tfrac{i}{k} \in V(\Pk)$. Then using the reflection invariance of $\mu_{\Pk}$ we have
\begin{align*}
	\int_{V(H \os \Pk)} \Lin(f) d(\nu_{H}\os \mu_{\Pk}) &\overset{\eqref{eq:slash-measure}}{=} \int_{E(H)} \left(\int_{V(\Pk)} \Lin(f)(e \os \tfrac{i}{k}) \mu_{\Pk}(\tfrac{i}{k}) \right)d\nu_{H}(e) \\
	&= \int_{E(H)} \left(\int_{V(\Pk)} \Lin(f)(e \os \tfrac{i}{k}) \frac{\left(\mu_{\Pk}(\tfrac{i}{k})+\mu_{\Pk}(1-\tfrac{i}{k})\right)}{2} \right)d\nu_{H}(e) \\
	&\overset{\eqref{eq:Linaverage}}{=} \int_{E(H)} \left(\int_{V(\Pk)} \frac{f(e^-)+f(e^+)}{2} \mu_{\Pk}(\tfrac{i}{k}) \right)d\nu_{H}(e) \\
	&= \int_{E(H)} \frac{f(e^-)+f(e^+)}{2} d\nu_{H}(e) \\
	&\overset{\eqref{eq:induced-integral}}{=} \int_{V(H)} f d\mu(\nu_{H}).
\end{align*}
\end{proof}

Barycentric extension operators preserve $L_\infty$-norms, Lipschitz constants, and, under certain restrictions, $L_1$-norms.

\begin{prop} \label{prop:Linprops}
For any graph $H$ and function $f \colon V(H) \to \bR$,
\begin{itemize}
    \item $\norm{ \Lin(f) }_{\infty} = \norm{ f }_{\infty}$,
    \item $\Lip(\Lin(f)) = \Lip(f)$.
\end{itemize}
Moreover, if $\nu_H$ is a measure on $E(H)$, $\mu_{\Pk}$ is a reflection invariant probability measure on $V(\Pk)$, and $f$ satisfies the edge-sign property, then
	\begin{itemize}
		\item $\norm{ \Lin(f)}_{L_1(\nu_{H} \os \mu_{\Pk})} = \norm{ f }_{L_1(\mu(\nu_{ H}))}$.
	\end{itemize}
\end{prop}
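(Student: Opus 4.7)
My plan is to handle each of the three claims by direct computation, exploiting the specific form of $\Lin(f)$ on each $\Pk$-copy.

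For the $L_\infty$ bound, observe that for any $e\os\tfrac{i}{k}\in V(H\os\Pk)$ the value $\Lin(f)(e\os\tfrac{i}{k})=(1-\tfrac{i}{k})f(e^-)+\tfrac{i}{k}f(e^+)$ is a convex combination of $f(e^-)$ and $f(e^+)$, so $|\Lin(f)(e\os\tfrac{i}{k})|\le\max\{|f(e^-)|,|f(e^+)|\}\le\|f\|_\infty$. For the matching lower bound, the endpoints $i=0$ and $i=k$ recover $f(e^-)$ and $f(e^+)$ exactly, and by our standing assumption every vertex of $H$ is an endpoint of some edge, so $\sup|\Lin(f)|\ge\|f\|_\infty$.

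For the Lipschitz statement, since both $\sd_H$ and $\sd_{\Pk}$ are geodesic, $\sd_H\os\sd_{\Pk}$ is geodesic, and hence $\Lip(\Lin(f))=\|\nabla_{\sd_H\os\sd_{\Pk}}\Lin(f)\|_\infty$ (and similarly for $f$). A generic edge of $H\os\Pk$ has the form $e\os\vep_i$ where $\vep_i=(\tfrac{i-1}{k},\tfrac{i}{k})$. A direct computation gives
\begin{equation*}
\nabla_{\sd_H\os\sd_{\Pk}}\Lin(f)(e\os\vep_i)=\frac{\Lin(f)(e\os\tfrac{i}{k})-\Lin(f)(e\os\tfrac{i-1}{k})}{\sd_H(e)\cdot\tfrac{1}{k}}=\frac{\tfrac{1}{k}\bigl(f(e^+)-f(e^-)\bigr)}{\sd_H(e)\cdot\tfrac{1}{k}}=\nabla_{\sd_H}f(e),
\end{equation*}
so taking the supremum in absolute value over all edges on both sides yields $\Lip(\Lin(f))=\Lip(f)$.

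For the $L_1$ identity, the key observation is that the edge-sign property for $f$ upgrades to the pointwise identity $|\Lin(f)|=\Lin(|f|)$. Indeed, for any $e\os\tfrac{i}{k}\in V(H\os\Pk)$, the scalars $f(e^-)$ and $f(e^+)$ share a common sign (or one vanishes), so
\begin{equation*}
|\Lin(f)(e\os\tfrac{i}{k})|=\bigl|(1-\tfrac{i}{k})f(e^-)+\tfrac{i}{k}f(e^+)\bigr|=(1-\tfrac{i}{k})|f(e^-)|+\tfrac{i}{k}|f(e^+)|=\Lin(|f|)(e\os\tfrac{i}{k}).
\end{equation*}
Applying Lemma~\ref{lem:intLin} to $|f|$ (whose barycentric extension is reflection-compatible with the reflection-invariant $\mu_{\Pk}$) then gives
\begin{equation*}
\|\Lin(f)\|_{L_1(\nu_H\os\mu_{\Pk})}=\int_{V(H\os\Pk)}\Lin(|f|)\,d(\nu_H\os\mu_{\Pk})=\int_{V(H)}|f|\,d\mu(\nu_H)=\|f\|_{L_1(\mu(\nu_H))}.
\end{equation*}
There is no real obstacle here; the only subtle point is that the edge-sign property is precisely what is needed to commute $|\cdot|$ with $\Lin$, after which the $L_1$ identity reduces to the already-established Lemma~\ref{lem:intLin}.
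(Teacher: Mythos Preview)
Your proof is correct and follows exactly the same approach as the paper's. The paper declares the first two items ``obvious'' and omits their proofs, while for the third it gives precisely your argument: the edge-sign property yields $|\Lin(f)|=\Lin(|f|)$, and then Lemma~\ref{lem:intLin} applied to $|f|$ finishes. Your additional details for the $L_\infty$ and Lipschitz claims (convex-combination bound plus endpoint recovery, and the edgewise gradient computation showing $\nabla_{\sd_H\os\sd_{\Pk}}\Lin(f)(e\os\vep_i)=\nabla_{\sd_H}f(e)$) are the natural ones and justify what the paper leaves implicit.
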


\begin{proof}
The first two items are obvious and we omit their proofs. For the third, since $f$ has the edge-sign property, it is clear that $|\Lin(f)| = \Lin(|f|)$. Together with Lemma \ref{lem:intLin}, this gives us
	\begin{align*}
		\int_{V(H)} |f| d\mu(\nu_{ H})\overset{\text{Lem }\ref{lem:intLin}}{=} \int_{V(H \os \Pk)} \Lin(|f|) d(\nu_{H} \os \mu_{\Pk}) = \int_{V(H \os \Pk)} |\Lin(f)|d(\nu_{H} \os \mu_{\Pk}).
	\end{align*} 
\end{proof}

The last proposition shows how one may commute pullback operators with the gradient operator, which implies that pullback operators preserve Lipschitz constants. It is used in the proof of Theorem~\ref{thm:Lipgrowth}.

\begin{prop} \label{prop:pullbackLip}
Let $\theta: V(G) \to V(G')$ a surjective graph morphism between graphs and $\sd_{G'},\sd_G$ geodesic metrics on $V(G'),V(G)$ such that $\sd_{G'}(\theta(e)) = \sd_{G}(e)$ for every $e \in E(G)$. Then for every $f: V(G') \to \bR$,
\begin{itemize}
    \item $(\nabla_{\sd_G} \circ \theta^*)(f) = (\theta^* \circ \nabla_{\sd_{G'}})(f)$,
    \item $\Lip(\theta^*(f)) = \Lip(f)$.
\end{itemize}
\end{prop}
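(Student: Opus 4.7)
The plan is straightforward since both claims reduce to unwinding definitions; the second follows easily from the first together with the standard fact that for geodesic metrics the Lipschitz constant coincides with the sup-norm of the gradient.

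For the first identity $(\nabla_{\sd_G} \circ \theta^*)(f) = (\theta^* \circ \nabla_{\sd_{G'}})(f)$, I would fix an arbitrary edge $e \in E(G)$ and compute directly. Three ingredients combine: (i) by definition $\theta^*(f)(v) = f(\theta(v))$ for every vertex $v$; (ii) because $\theta$ is a graph morphism, the induced edge-map satisfies $\theta(e)^{\pm} = \theta(e^{\pm})$; (iii) by hypothesis $\sd_{G'}(\theta(e)) = \sd_G(e)$. Substituting into the definition of $\nabla$,
\[
(\nabla_{\sd_G}\theta^*(f))(e) = \frac{f(\theta(e^+)) - f(\theta(e^-))}{\sd_G(e)} = \frac{f(\theta(e)^+) - f(\theta(e)^-)}{\sd_{G'}(\theta(e))} = (\nabla_{\sd_{G'}} f)(\theta(e)) = \theta^*(\nabla_{\sd_{G'}} f)(e),
\]
which is the claim.

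For the second identity, I would first record the well-known observation that when $\sd$ is a geodesic metric on $V(G)$, one has $\Lip(f) = \max_{e \in E(G)} |\nabla_{\sd} f(e)|$: the inequality $\geq$ is immediate from the definition of $\nabla$, and $\leq$ follows from the geodesicity of $\sd$ and the triangle inequality applied along a geodesic path between any two vertices. Applying this to both $\theta^*(f)$ on $G$ and $f$ on $G'$ and invoking the first item gives
\[
\Lip(\theta^*(f)) = \max_{e \in E(G)} |\nabla_{\sd_G}\theta^*(f)(e)| = \max_{e \in E(G)} |(\nabla_{\sd_{G'}} f)(\theta(e))| \leq \max_{e' \in E(G')} |\nabla_{\sd_{G'}} f(e')| = \Lip(f).
\]
The reverse inequality uses surjectivity: interpreting ``surjective graph morphism'' as the induced map $E(G) \to E(G')$ being surjective, the set $\{\theta(e) : e \in E(G)\}$ equals $E(G')$, so the middle inequality above is actually an equality.

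There is no real obstacle here; the only point that requires care is making sure the surjectivity hypothesis covers the edge-map (not just the vertex-map), which is what is needed to conclude $\Lip(\theta^*(f)) \geq \Lip(f)$. Without edge-surjectivity one only obtains the inequality $\Lip(\theta^*(f)) \leq \Lip(f)$, which is still enough for most applications but not for the stated equality.
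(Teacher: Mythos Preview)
Your proof is correct and follows essentially the same approach as the paper's: direct computation for the first item, then the first item combined with $\Lip = \|\nabla\|_\infty$ and surjectivity for the second (the paper phrases the latter as ``$\|\theta^*(g)\|_\infty = \|g\|_\infty$ since $\theta$ is surjective''). Your explicit flag about needing edge-surjectivity rather than mere vertex-surjectivity is a valid point that the paper glosses over; in the paper's applications ($\theta = id_H \os \pi$ with $\pi$ a $\Pk$-collapsing map) the induced edge-map is indeed surjective.
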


\begin{proof}
Let $f: V(G') \to \bR$ and $e \in E(G')$ be arbitrary. Since $\theta$ is a graph morphism, $\theta(e^\pm) = \theta(e)^\pm$. Then we have
\begin{align*}
	\nabla_{\sd_G}(\theta^*(f))(e) &= \frac{\theta^*(f)(e^+) - \theta^*(f)(e^-)}{\sd_{G}(e)} = \frac{f(\theta(e^+)) - f(\theta(e^-))}{\sd_{G}(e)} \\
	&= \frac{f(\theta(e)^+) - f(\theta(e)^-)}{\sd_{G}(e)} = \frac{f(\theta(e)^+) - f(\theta(e)^-)}{\sd_{G'}(\theta(e))} \\
	&= (\nabla_{\sd_{G'}} f)(\theta(e)) = \theta^*(\nabla_{\sd_{G'}} f)(e).
\end{align*}
	
The second item follows from the first and the fact that $\|\theta^*(g)\|_\infty = \|g\|_\infty$ since $\theta$ is surjective.
\end{proof}

\subsection{Proofs of main theorems}
In this final subsection, we provide the proofs of Theorems \ref{thm:stronglyorthog}, \ref{thm:Lpnorms}, and \ref{thm:Lipgrowth}.
We start with the proof of Theorem \ref{thm:stronglyorthog} regarding the preservation of strong orthogonality. This proof requires Propositions \ref{prop:epsslashcommute}, \ref{prop:epspullbackcommute}, \ref{prop:linearcombo}, and \ref{prop:condexpslashcommute}.

\begin{proof}[Proof of Theorem \ref{thm:stronglyorthog}]
Let $f \neq f' \in F_1$ and $\eps,\eps' \in \{-,+\}$. By Proposition~\ref{prop:linearcombo}, there are scalars $c_1,c_2,c_3,c_4 \in \bR$ such that, for every $e_1 \in E(H)$,
\begin{equation} \label{eq:linearcombo}
\int_{E(\Pk)} (\Lin(f)_\eps\Lin(f')_{\eps'})(e_1\os e_2) d\pi_\#\nu_{G}(e_2) = (c_1f_-f'_- + c_2f_-f'_+ + c_3f_+f'_- + c_4f_+f'_+)(e_1).
\end{equation}
Then we have
\begin{align*}
    \int_{E(H\os G)} ((id_{H}\os\pi)^* \circ \Lin)(f))_\eps & ((id_{H}\os\pi)^* \circ \Lin)(f'))_{\eps'} d(\nu_{H}\os\nu_G)\\
     & \overset{\text{Prop }\ref{prop:epspullbackcommute}}{=} \int_{E(H\os G)} (id_{H}\os\pi)^*(\Lin(f)_\eps) (id_{H}\os\pi)^*(\Lin(f')_{\eps'}) d(\nu_{H}\os\nu_G) \\
    &= \int_{E(H\os\Pk)} \Lin(f)_\eps \Lin(f')_{\eps'} d(\nu_{H}\os\pi_\#\nu_{G}) \\
    &= \int_{E(H)}\int_{E(\Pk)} (\Lin(f)_\eps\Lin(f')_{\eps'})(e_1\os e_2) d\pi_\#\nu_{G}(e_2)  d\nu_{H}(e_1) \\
    &\overset{\eqref{eq:linearcombo}}{=} \int_{E(H)} (c_1f_-f'_- + c_2f_-f'_+ + c_3f_+f'_- + c_4f_+f'_+) d\nu_{H} \\
    &= 0,
\end{align*}
where the last equality holds since $f,f'$ are assumed to be strongly $\nu_H$-orthogonal. This proves that $((id_{H} \os \pi)^* \circ \Lin)(F_1)$ is strongly $\nu_H\os\nu_G$-orthogonal.

Now let $f \os g \neq f' \os g' \in F_2 \os F_3$. Then we have
\begin{align*}
    \int_{E(H\os G)} (f\os g)_\eps(f'\os g')_{\eps'} d(\nu_{H}\os\nu_G) &\overset{\text{Prop }\ref{prop:epsslashcommute}}{=} \int_{E(H\os G)} (f\os g_\eps)(f'\os g'_{\eps'}) d(\nu_{H}\os\nu_G) \\
    &= \int_{E(H)} ff' d\nu_H \int_{E(G)} g_\eps g'_{\eps'} d\nu_G \\
    &=0,
\end{align*}
where the last equality holds since $F_2$ is $\nu_H$-orthogonal and $F_3$ is strongly $\nu_G$-orthogonal. This proves that $F_2 \os F_3$ is strongly $\nu_H\os\nu_G$-orthogonal.

It remains to verify strong $\nu_H\os\nu_G$-orthogonality between $((id_{H}\os\pi)^* \circ \Lin)(F_1)$ and $F_2 \os F_3$. Let $((id_{H}\os\pi)^* \circ \Lin)(f) \in ((id_{H}\os\pi)^* \circ \Lin)(F_1)$ and $f'\os g' \in F_2 \os F_3$. It follows immediately from Proposition~\ref{prop:epspullbackcommute} that $((id_{H}\os\pi)^* \circ \Lin)(f)_\eps$ is $\sigma(id_H\os\pi)$-measurable. Then if we can show $(f'\os g')_{\eps'} \in \ker(\bE_{\nu_H\os\nu_G}^{id_H\os\pi})$, we have the desired orthogonality and the proof is complete.
\begin{align*}
    \bE_{\nu_H\os\nu_G}^{id_H\os\pi}((f'\os g')_{\eps'}) \overset{\text{Prop }\ref{prop:epsslashcommute}}{=} \bE_{\nu_H\os\nu_G}^{id_H\os\pi}(f'\os g'_{\eps'}) \overset{\text{Prop }\ref{prop:condexpslashcommute}}{=} f'\os\bE_{\nu_G}^{\pi}(g'_{\eps'}) = 0,
\end{align*}
where the last equation holds by assumption on $F_3$.
\end{proof}

We now provide the details of the proof of Theorem \ref{thm:Lpnorms} pertaining to the preservation of edge-sign property and $L_1,L_\infty$-norms. This proof requires Propositions \ref{prop:slashprops}, \ref{prop:measurepreserve}, and \ref{prop:Linprops}.

\begin{proof}[Proof of Theorem \ref{thm:Lpnorms}]
Let $((id_{H}\os\pi)^* \circ \Lin)(f_1) \in ((id_{H}\os\pi)^* \circ \Lin)(F_1)$ and $f_2 \os f_3 \in F_2 \os F_3$. First we have, for $p \in \{1,\infty\}$,
\begin{align*}
    \|((id_{H}\os\pi)^* \circ \Lin)(f_1)\|_{L_p(\nu_H\os\mu(\nu_G))} &\overset{\text{Prop }\ref{prop:measurepreserve}}{=} \|\Lin(f_1)\|_{L_p(\nu_H\os\mu(\pi_\#\nu_G))} \overset{\text{Prop }\ref{prop:Linprops}}{=} \|f_1\|_{L_p(\mu(\nu_H))}, \\
    \|f_2 \os f_3\|_p &\overset{\text{Prop }\ref{prop:slashprops}}{=} \|f_2\|_p \cdot \|f_3\|_p,
\end{align*}
which proves \eqref{eq:thm2Linfty} and \eqref{eq:thm2L1}.

Furthermore, it is clear that $((id_{H}\os\pi)^* \circ \Lin)(f_1)$ has the edge-sign property since $f_1$ does and that $f_2 \os f_3$ has the edge-sign property since $f_3$ does, proving \eqref{eq:thm2edgesign}.
\end{proof}

Final, the proof of Theorem \ref{thm:Lipgrowth} is given below, thereby completing the proof of the main results. This proof requires Propositions \ref{prop:slashprops}, \ref{prop:Linprops}, and \ref{prop:pullbackLip}.
\begin{proof}[Proof of Theorem \ref{thm:Lipgrowth}]
Let $s \geq 0$. Of course, it is easy to see from the definitions that it suffices to prove
\begin{align*}
    \gamma_{((id_{H}\os\pi)^* \circ \Lin)(F_1)}(s) &= \gamma_{F_1}(s), \\
    \gamma_{F_2 \os F_3}(s) &\geq |F_2|\cdot\gamma_{F_3}\left(\frac{s}{\sup\Lip(F_2)}\right),
\end{align*}
where $\sup\Lip(F_2) := \sup_{f_2 \in F_2}\sup_{e \in E(H)} \frac{|f_2(e)|}{\sd_H(e)}$, and the above follow from
\begin{align*}
    \Lip(((id_{H}\os\pi)^* \circ \Lin)(f_1)) &= \Lip(f_1), \\
    \Lip(f_2 \os f_3) &\leq \sup_{e\in E(H)}\frac{|f_2(e)|}{\sd_H(e)}\Lip(f_3) \\
    |((id_{H}\os\pi)^* \circ \Lin)(F_1)| &= |F_1| \\
    |F_2 \os F_3| &= |F_2||F_3|
\end{align*}
for every $f_1 \in F_1$, $f_2 \in F_2$, and $f_3 \in F_3$. The first line follows from Propositions \ref{prop:Linprops} and \ref{prop:pullbackLip}, the second from Proposition \ref{prop:slashprops}, and the third and fourth are obvious.
\end{proof}

\appendix
\section{~}
In this short appendix we recall for the convenience of the reader the construction of orthogonal sets needed in the proof of Corollary \ref{cor:spec-slashpower}.

\begin{lemm} \label{lem:Hadamard}
Let $\bP$ be the uniform probability measure on a finite set $\Omega$. Then there exists a collection of functions  $\{f_j \colon  \Omega \to \bR\}_{j\in J}$ such that
\begin{itemize}
    \item $\{f_j\}_{j\in J}$ is orthogonal as a subset of $L_2(\Omega,\bP)$,
    \item $\sup_{j \in J} \|f_j\|_{L_\infty(\bP)} \leq 1$,
    \item $\inf_{j \in J} \|f_j\|_{L_1(\bP)} \geq \frac{1}{2}$, and
    \item $|J| \geq \frac{1}{2}|\Omega|$.
\end{itemize}
\end{lemm}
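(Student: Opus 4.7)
The plan is to invoke Sylvester's recursive construction of Hadamard matrices and then truncate to fit inside $\Omega$. Write $n := |\Omega|$ and let $m := 2^{\lfloor \log_2 n \rfloor}$ denote the largest power of $2$ with $m \le n$; in particular $m \ge n/2$, which is what will give the cardinality bound $|J| \ge |\Omega|/2$ at the end. The trivial case $n=1$ is handled by taking $J=\{1\}$ and $f_1 \equiv 1$, so from now on I would assume $n \ge 2$ and hence $m \ge 1$.

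Next, I would recall Sylvester's construction: set $H_1 := (1)$ and recursively define
\begin{equation*}
H_{2^k} := \begin{pmatrix} H_{2^{k-1}} & H_{2^{k-1}} \\ H_{2^{k-1}} & -H_{2^{k-1}} \end{pmatrix}.
\end{equation*}
A routine induction shows that $H_m$ is an $m \times m$ matrix with entries in $\{-1,+1\}$ satisfying $H_m H_m^T = m \cdot I_m$, i.e., its rows are pairwise orthogonal in $\bR^m$ (equipped with the standard inner product) and each has squared norm $m$.

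Now fix an arbitrary injection $\iota \colon \{1,\dots,m\} \hookrightarrow \Omega$, let $S := \iota(\{1,\dots,m\}) \subseteq \Omega$, and index the columns of $H_m$ by $S$ via $\iota$. For each $j \in J := \{1,\dots,m\}$, define $f_j \colon \Omega \to \bR$ by $f_j(\omega) := H_m(j, \iota^{-1}(\omega))$ if $\omega \in S$ and $f_j(\omega) := 0$ otherwise. All four conditions are then immediate: each entry of $f_j$ lies in $\{-1,0,+1\}$, so $\|f_j\|_{L_\infty(\bP)} \le 1$; the $L_1$-norm is $\|f_j\|_{L_1(\bP)} = \frac{1}{n}\sum_{\omega \in S} 1 = \frac{m}{n} \ge \frac{1}{2}$; orthogonality in $L_2(\bP)$ follows from
\begin{equation*}
\langle f_j, f_{j'} \rangle_{L_2(\bP)} = \frac{1}{n} \sum_{\omega \in S} H_m(j,\iota^{-1}(\omega)) H_m(j',\iota^{-1}(\omega)) = \frac{1}{n}(H_m H_m^T)_{j,j'} = 0
\end{equation*}
for $j \ne j'$; and $|J| = m \ge n/2 = |\Omega|/2$.

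There is no genuine obstacle here: the construction is entirely standard, and the only mildly delicate point is choosing $m$ correctly so that simultaneously $m \le n$ (so that one can embed $m$ columns into $\Omega$ while keeping $\|f_j\|_\infty \le 1$) and $m \ge n/2$ (so that the cardinality and $L_1$-norm bounds hold). Taking $m$ to be the largest power of two below $n$ achieves both at once.
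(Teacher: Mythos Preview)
Your proof is correct and takes essentially the same approach as the paper: choose the largest power of two $m \le |\Omega|$, use an $m \times m$ Hadamard matrix from Sylvester's construction to define $m$ functions supported on an $m$-element subset of $\Omega$, and extend by zero. The only cosmetic differences are that the paper uses columns rather than rows (immaterial, since Sylvester's matrices are symmetric) and does not single out the trivial case $|\Omega|=1$.
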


\begin{proof}
Let $n \in \bN$ such that $2^{n} \leq |\Omega| < 2^{n+1}$. Choose any subset $S \subset \Omega$ with $|S| = 2^{n}$, and choose an arbitrary enumeration of its elements, say $S := \{s_i\}_{i=1}^{2^n}$. Let $H = [h_{ij}]_{i,j=1}^{2^n}$ be a $2^n \times 2^n$ Hadamard matrix, meaning one whose columns (and therefore rows) are orthogonal and such that $h_{ij} \in \{-1,1\}$ for every $1\le i,j \leq 2^n$. Such a matrix exists by Sylvester's construction \cite[$\S$~2.1.1]{Hadamard}. For each $1 \leq j \leq 2^n$, we associate to the $j$th column of $H$ a function $f_j\colon \Omega \to \bR$ defined by
\begin{align*}
    f_j(\omega) \eqd \begin{cases} h_{ij} & \omega = s_i \\
    0 & \omega \not\in S. \end{cases}
\end{align*}
Then the collection $\{f_j\}_{j=1}^{2^n}$ satisfies the four desired properties.
\end{proof}

\bibliographystyle{alpha}
%\bibliography{tale_of_two_dimensions}

% \bib, bibdiv, biblist are defined by the amsrefs package.
\begin{bibdiv}
\begin{biblist}

\bib{ABS21}{book}{
      author={Ambrosio, Luigi},
      author={Bru\'{e}, Elia},
      author={Semola, Daniele},
       title={Lectures on optimal transport},
      series={Unitext},
   publisher={Springer, Cham},
        date={2021},
      volume={130},
        ISBN={978-3-030-72161-9; 978-3-030-72162-6},
         url={https://doi.org/10.1007/978-3-030-72162-6},
        note={La Matematica per il 3+2},
      review={\MR{4294651}},
}

\bib{Ulm_book}{article}{
      author={Arendt, W.},
      author={editors, Schleich W.~P},
       title={Mathematical {A}nalysis of {E}volution, {I}nformation, and
  {C}omplexity},
        date={2007},
}

\bib{BobkovHoudre97}{article}{
      author={Bobkov, Serguei~G.},
      author={Houdr\'{e}, Christian},
       title={Some connections between isoperimetric and {S}obolev-type
  inequalities},
        date={1997},
        ISSN={0065-9266},
     journal={Mem. Amer. Math. Soc.},
      volume={129},
      number={616},
       pages={viii+111},
         url={https://doi.org/10.1090/memo/0616},
      review={\MR{1396954}},
}

\bib{Bourgain86}{article}{
    AUTHOR = {Bourgain, J.},
     TITLE = {The metrical interpretation of superreflexivity in {B}anach
              spaces},
   JOURNAL = {Israel J. Math.},
  FJOURNAL = {Israel Journal of Mathematics},
    VOLUME = {56},
      YEAR = {1986},
    NUMBER = {2},
     PAGES = {222--230},
      ISSN = {0021-2172},
   MRCLASS = {46B10 (46B20)},
  MRNUMBER = {880292},
MRREVIEWER = {R. C. James},
       DOI = {10.1007/BF02766125},
       URL = {https://doi.org/10.1007/BF02766125},
}

\bib{Canzani}{article}{
      author={Canzani, Yaiza},
       title={{A}nalysis on {M}anifolds via the {L}aplacian},
        date={2013},
     journal={lecture notes, Harvard University},
}

\bib{CGY00}{article}{
      author={Chung, Fan},
      author={Grigor\cprime~yan, Alexander},
      author={Yau, Shing-Tung},
       title={Higher eigenvalues and isoperimetric inequalities on {R}iemannian
  manifolds and graphs},
        date={2000},
        ISSN={1019-8385},
     journal={Comm. Anal. Geom.},
      volume={8},
      number={5},
       pages={969\ndash 1026},
         url={https://doi.org/10.4310/CAG.2000.v8.n5.a2},
      review={\MR{1846124}},
}

\bib{Charikar02}{inproceedings}{
      author={Charikar, Moses~S.},
       title={Similarity estimation techniques from rounding algorithms},
        date={2002},
   booktitle={Proceedings of the {T}hirty-{F}ourth {A}nnual {ACM} {S}ymposium
  on {T}heory of {C}omputing},
   publisher={ACM, New York},
       pages={380\ndash 388},
         url={https://doi.org/10.1145/509907.509965},
      review={\MR{2121163}},
}

\bib{Chavel84}{book}{
      author={Chavel, Isaac},
       title={Eigenvalues in {R}iemannian geometry},
      series={Pure and Applied Mathematics},
   publisher={Academic Press, Inc., Orlando, FL},
        date={1984},
      volume={115},
        ISBN={0-12-170640-0},
        note={Including a chapter by Burton Randol, With an appendix by Jozef
  Dodziuk},
      review={\MR{768584}},
}

\bib{Chung97}{book}{
      author={Chung, Fan R.~K.},
       title={Spectral graph theory},
      series={CBMS Regional Conference Series in Mathematics},
   publisher={Published for the Conference Board of the Mathematical Sciences,
  Washington, DC; by the American Mathematical Society, Providence, RI},
        date={1997},
      volume={92},
        ISBN={0-8218-0315-8},
      review={\MR{1421568}},
}

\bib{CK}{article}{
      author={Cheeger, Jeff},
      author={Kleiner, Bruce},
       title={Realization of metric spaces as inverse limits, and bilipschitz
  embedding in {$L_1$}},
        date={2013},
     journal={Geom. Funct. Anal.},
      volume={23},
      number={1},
       pages={96\ndash 133},
}

\bib{DJT95}{book}{
      author={Diestel, Joe},
      author={Jarchow, Hans},
      author={Tonge, Andrew},
       title={Absolutely summing operators},
      series={Cambridge Studies in Advanced Mathematics},
   publisher={Cambridge University Press, Cambridge},
        date={1995},
      volume={43},
        ISBN={0-521-43168-9},
         url={https://doi.org/10.1017/CBO9780511526138},
      review={\MR{1342297}},
}

\bib{DKO20}{article}{
      author={Dilworth, Stephen~J.},
      author={Kutzarova, Denka},
      author={Ostrovskii, Mikhail~I.},
       title={Lipschitz-free spaces on finite metric spaces},
        date={2020},
        ISSN={0008-414X},
     journal={Canad. J. Math.},
      volume={72},
      number={3},
       pages={774\ndash 804},
         url={https://doi.org/10.4153/s0008414x19000087},
      review={\MR{4098600}},
}

\bib{EvansMatsen12}{article}{
      author={Evans, Steven~N.},
      author={Matsen, Frederick~A.},
       title={The phylogenetic {K}antorovich-{R}ubinstein metric for
  environmental sequence samples},
        date={2012},
        ISSN={1369-7412},
     journal={J. R. Stat. Soc. Ser. B. Stat. Methodol.},
      volume={74},
      number={3},
       pages={569\ndash 592},
         url={https://doi.org/10.1111/j.1467-9868.2011.01018.x},
      review={\MR{2925374}},
}

\bib{Federer59}{article}{
      author={Federer, Herbert},
       title={Curvature measures},
        date={1959},
        ISSN={0002-9947},
     journal={Trans. Amer. Math. Soc.},
      volume={93},
       pages={418\ndash 491},
         url={https://doi.org/10.2307/1993504},
      review={\MR{110078}},
}

\bib{FedererFleming60}{article}{
      author={Federer, Herbert},
      author={Fleming, Wendell~H.},
       title={Normal and integral currents},
        date={1960},
        ISSN={0003-486X},
     journal={Ann. of Math. (2)},
      volume={72},
       pages={458\ndash 520},
         url={https://doi.org/10.2307/1970227},
      review={\MR{123260}},
}

\bib{FigalliGlaudo21}{book}{
      author={Figalli, Alessio},
      author={Glaudo, Federico},
       title={An invitation to optimal transport, {W}asserstein distances, and
  gradient flows},
      series={EMS Textbooks in Mathematics},
   publisher={EMS Press, Berlin},
        date={2021},
        ISBN={978-3-98547-010-5},
         url={https://doi.org/10.4171/ETB/22},
      review={\MR{4331435}},
}

\bib{GNRS}{article}{
   author={Gupta, Anupam},
   author={Newman, Ilan},
   author={Rabinovich, Yuri},
   author={Sinclair, Alistair},
   title={Cuts, trees and $l_1$-embeddings of graphs},
   journal={Combinatorica},
   volume={24},
   date={2004},
   number={2},
   pages={233--269}
}

\bib{Hadamard}{book}{
      author={Horadam, K.~J.},
       title={Hadamard matrices and their applications},
   publisher={Princeton University Press, Princeton, NJ},
        date={2007},
}

\bib{Kigami01}{book}{
      author={Kigami, Jun},
       title={Analysis on fractals},
      series={Cambridge Tracts in Mathematics},
   publisher={Cambridge University Press, Cambridge},
        date={2001},
      volume={143},
}

\bib{Kislyakov75}{article}{
      author={Kislyakov, S.~V.},
       title={Sobolev imbedding operators, and the nonisomorphism of certain {B}anach spaces},
        date={1975},
        ISSN={0374-1990},
     journal={Funkcional. Anal. i Prilo\v{z}en.},
      volume={9},
      number={4},
       pages={22\ndash 27},
      review={\MR{0627173}},
}

\bib{KhotNaor06}{article}{
      author={Khot, Subhash},
      author={Naor, Assaf},
       title={Nonembeddability theorems via {F}ourier analysis},
        date={2006},
     journal={Math. Ann.},
      volume={334},
      number={4},
       pages={821\ndash 852},
}

\bib{LangPlaut01}{article}{
      author={Lang, Urs},
      author={Plaut, Conrad},
       title={Bilipschitz embeddings of metric spaces into space forms},
        date={2001},
        ISSN={0046-5755},
     journal={Geom. Dedicata},
      volume={87},
      number={1-3},
       pages={285\ndash 307},
         url={https://doi.org/10.1023/A:1012093209450},
      review={\MR{1866853}},
}

\bib{LeeRag10}{article}{
      author={Lee, James~R.},
      author={Raghavendra, Prasad},
       title={Coarse differentiation and multi-flows in planar graphs},
        date={2010},
        ISSN={0179-5376},
     journal={Discrete Comput. Geom.},
      volume={43},
      number={2},
       pages={346\ndash 362},
         url={https://doi.org/10.1007/s00454-009-9172-4},
      review={\MR{2579701}},
}

\bib{VMP}{misc}{
      author={Mathey-Prevot, Maxime},
      author={Valette, Alain},
       title={Wasserstein distance and metric trees},
   publisher={arXiv},
        date={2021},
         url={https://arxiv.org/abs/2110.02115},
}

\bib{NS07}{article}{
      author={Naor, Assaf},
      author={Schechtman, Gideon},
       title={Planar earthmover is not in {$L_1$}},
        date={2007},
        ISSN={0097-5397},
     journal={SIAM J. Comput.},
      volume={37},
      number={3},
       pages={804\ndash 826},
         url={https://doi.org/10.1137/05064206X},
      review={\MR{2341917}},
}

\bib{Ostrovskii05}{article}{
      author={Ostrovskii, M.~I.},
       title={Sobolev spaces on graphs},
        date={2005},
        ISSN={1607-3606},
     journal={Quaest. Math.},
      volume={28},
      number={4},
       pages={501\ndash 523},
         url={https://doi.org/10.2989/16073600509486144},
      review={\MR{2182458}},
}

\bib{RR98}{book}{
      author={Rachev, Svetlozar~T.},
      author={R\"{u}schendorf, Ludger},
       title={Mass transportation problems. {V}ol. {I}},
      series={Probability and its Applications (New York)},
   publisher={Springer-Verlag, New York},
        date={1998},
        ISBN={0-387-98350-3},
        note={Theory},
      review={\MR{1619170}},
}

\bib{RR98_II}{book}{
      author={Rachev, Svetlozar~T.},
      author={R\"{u}schendorf, Ludger},
       title={Mass transportation problems. {V}ol. {II}},
      series={Probability and its Applications (New York)},
   publisher={Springer-Verlag, New York},
        date={1998},
        ISBN={0-387-98352-X},
        note={Applications},
      review={\MR{1619171}},
}

\bib{Santambrogio15}{book}{
      author={Santambrogio, Filippo},
       title={Optimal transport for applied mathematicians},
      series={Progress in Nonlinear Differential Equations and their
  Applications},
   publisher={Birkh\"{a}user/Springer, Cham},
        date={2015},
      volume={87},
        ISBN={978-3-319-20827-5; 978-3-319-20828-2},
         url={https://doi.org/10.1007/978-3-319-20828-2},
        note={Calculus of variations, PDEs, and modeling},
      review={\MR{3409718}},
}

\bib{Villani03}{book}{
      author={Villani, C\'{e}dric},
       title={Topics in optimal transportation},
      series={Graduate Studies in Mathematics},
   publisher={American Mathematical Society, Providence, RI},
        date={2003},
      volume={58},
        ISBN={0-8218-3312-X},
         url={https://doi.org/10.1090/gsm/058},
      review={\MR{1964483}},
}

\bib{Villani09}{book}{
      author={Villani, C\'{e}dric},
       title={Optimal transport},
      series={Grundlehren der mathematischen Wissenschaften [Fundamental
  Principles of Mathematical Sciences]},
   publisher={Springer-Verlag, Berlin},
        date={2009},
      volume={338},
        ISBN={978-3-540-71049-3},
         url={https://doi.org/10.1007/978-3-540-71050-9},
        note={Old and new},
      review={\MR{2459454}},
}

\bib{Weyl12}{article}{
      author={Weyl, Hermann},
       title={Das asymptotische {V}erteilungsgesetz der {E}igenwerte linearer
  partieller {D}ifferentialgleichungen (mit einer {A}nwendung auf die {T}heorie
  der {H}ohlraumstrahlung)},
        date={1912},
        ISSN={0025-5831},
     journal={Math. Ann.},
      volume={71},
      number={4},
       pages={441\ndash 479},
         url={https://doi.org/10.1007/BF01456804},
      review={\MR{1511670}},
}

\end{biblist}
\end{bibdiv}

\end{document}